\documentclass[12pt]{article}
\usepackage[centertags]{amsmath}
\usepackage{amsfonts}
\usepackage{amssymb}
\usepackage{amsthm}
\usepackage{newlfont}
\usepackage[left=1in,top=1in,right=1in,nohead,foot=0.5in]{geometry}
\usepackage[all]{xy}
 \newtheorem{thm}{Theorem}[section]
 \newtheorem{cor}[thm]{Corollary}
 \newtheorem{lem}[thm]{Lemma}
 \newtheorem{prop}[thm]{Proposition}
 \newtheorem{conj}{Conjecture}[section]

 \theoremstyle{definition}
 \newtheorem{defn}[thm]{Definition}
 \theoremstyle{remark}
 \newtheorem{rem}[thm]{Remark}
 \numberwithin{equation}{section}

 \newcommand{\GL}{\operatorname{GL}}

 \newcommand{\chr}{\operatorname{char}}

 \newcommand{\Vol}{\operatorname{Vol}}

\hfuzz2pt 
\newlength{\defbaselineskip}
\setlength{\defbaselineskip}{\baselineskip}

\begin{document}

\title{Isolated Periodic Points in Several Nonarchimedean Variables}
\author{Alon Levy}

\maketitle

\abstract{Let $\varphi: \mathbb{P}^{n}_{F} \to \mathbb{P}^{n}_{F}$ where $F$ is a complete valued field. If $x$ is a fixed point, such that the action of $\varphi$ on $T_{x}$ has eigenvalues $\lambda_{1}, \ldots, \lambda_{n}$, with $\lambda_{1}, \ldots, \lambda_{r}$ not contained in the multiplicative group generated by $\lambda_{r+1}, \ldots, \lambda_{n}$, then $\varphi$ has a codimension-$r$ fixed formal subvariety. Under mild assumptions, this subvariety is analytic. We use this to prove two results. First, we generalize results of Rivera-Letelier on isolated periodic points to higher dimension: if $F$ is $p$-adic, and each $|\lambda_{i}| \leq 1$, then there is an analytic neighborhood of $x$ without any other periodic points. And second, we prove Zhang's conjecture that there exists a $\overline{\mathbb{Q}}$-point with Zariski-dense forward orbit in two cases, extending results of Amerik, Bogomolov, and Rovinsky.}

\section{Introduction}

In the theory of dynamical systems on $\mathbb{P}^{n}$ over some complete valued field $F$, one of the tools for analyzing dynamical behavior is the linearization:

\begin{defn}\label{linearizable1}Let $\varphi: \mathbb{P}^{n}_{F} \to \mathbb{P}^{n}_{F}$, and suppose $\varphi(x) = x$. Near $x$, $\varphi$ is given by $n$ power series in $n$ variables, which we write as $\varphi_{1}(x_{1}, \ldots, x_{n}), \ldots, \varphi_{n}(x_{1}, \ldots, x_{n})$. We say that $\varphi$ is \textbf{formally linearizable} at $x$ if there exists power series $g_{1}, \ldots, g_{n} \in F[[x_{1}, \ldots, x_{n}]]$ and a matrix $A$ such that $\varphi_{i}(g_{1}, \ldots, g_{n}) = g_{i}(A\mathbf{x})$ for each $i$; if $\varphi$ is linearizable, $A$ must be the action of $\varphi$ on $T_{x}$. Moreover, if each $g_{i}$ converges in some polydisk around $x$, we say $\varphi$ is \textbf{locally linearizable} and the linearization is local-analytic.\end{defn}

Whether $\varphi$ is linearizable depends intimately on the eigenvalues of $A = \varphi_{*}T_{x}$, which we call \textbf{multipliers}. In $1$ dimension, it is conventional to write the multiplier as $\lambda$; at a fixed point $x \in \mathbb{A}^{1}$, it is equal to $\varphi'(x)$. The best results on linearization in $1$ dimension are collected below:

\begin{thm}\label{infodump}\cite{RL, Yoc} Suppose $\varphi: \mathbb{P}^{1} \to \mathbb{P}^{1}$ and $\varphi(0) = 0$. For any $F$, $\varphi$ is formally linearizable at $0$ if and only if $\lambda = \varphi'(0)$ is not zero or a root of unity. If, in the topology of $F$, $0 < |\lambda| \neq 1$, then $\varphi$ is also locally linearizable. Now, assume $\chr F = 0$. If $\lambda = 0$ then there is an analytic $g$ such that $\varphi(g(z)) = g(z^{e})$ for some integer $e > 1$, and if $\lambda = 1$ then there is an analytic $g$ such that $\varphi(g(z)) = g(z + z^{e})$. If $|\lambda| = 1$ and $\lambda$ is not a root of unity, then whether $\varphi$ is linearizable depends on the topology on $F$: when $F$ is nonarchimedean and $\chr F = 0$, $\varphi$ is always linearizable; and when $F = \mathbb{C}$, it may or may not be linearizable, depending on the irrational $\theta = (\log \lambda)/2\pi i$ -- if $\theta$ is very poorly approximated by rationals, e.g. if it is algebraic, then $\varphi$ is linearizable, and if $\theta$ is very well approximated, e.g. if it is a Liouville number, then $\varphi$ is not linearizable.\end{thm}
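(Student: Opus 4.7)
The plan is to split by the value of $\lambda$, with each regime requiring different techniques. For formal linearization, I would write $\varphi(z) = \lambda z + \sum_{k \geq 2} a_k z^k$ and seek $g(z) = z + \sum_{k \geq 2} b_k z^k$ with $\varphi \circ g = g(\lambda z)$. The coefficient of $z^n$ gives a recursion $(\lambda^n - \lambda) b_n = P_n(a_2,\ldots,a_n;\, b_2,\ldots,b_{n-1})$ with $P_n$ a universal polynomial, uniquely solvable for the $b_n$ exactly when $\lambda \neq 0$ and $\lambda^{n-1} \neq 1$ for all $n \geq 2$, i.e.\ when $\lambda$ is neither zero nor a root of unity. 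Conversely, if $\lambda = 0$ then $g(\lambda z) = g(0) = 0$ while $\varphi \circ g$ is a nonzero power series; and if $\lambda$ has exact order $m$, then any conjugating $g$ would force $\varphi^m = \mathrm{id}$, a nontrivial constraint that generically fails.

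For the hyperbolic case $0 < |\lambda| \neq 1$, the denominators $|\lambda^{n-1}-1|$ are bounded uniformly below, so either a majorant argument applied to the formal series or the classical K\"{o}nigs construction $g(z) = \lim_n \lambda^{-n}\varphi^n(z)$ (in the attracting case, with the repelling case handled by inversion) produces a local-analytic linearization. For $\lambda = 0$, I would write $\varphi(z) = c z^e + O(z^{e+1})$ and extract an $(e-1)$-th root of $1/c$ (using $\chr F = 0$, after extending $F$ if necessary) to produce a leading change of variable conjugating $\varphi$ to $z^e$; higher-order coefficients are then solved inductively, with convergence automatic because $z \mapsto z^e$ is super-attracting. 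For $\lambda = 1$, write $\varphi(z) = z + c z^e + O(z^{e+1})$ with $e \geq 2$ forced, and apply iterative conjugation to reach the polynomial normal form $z + z^e$; here $\chr F = 0$ enters to invert the small integers in the coefficient recursion.

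The main obstacle is the indifferent case $|\lambda| = 1$ with $\lambda$ non-torsion, where one faces the small-divisor problem. Nonarchimedeanly with $\chr F = 0$, if $\bar\lambda$ has multiplicative order $q$ in the residue field, one writes $\lambda^q = 1 + \mu$ with $|\mu| < 1$ and uses the $p$-adic logarithm to derive bounds of the form $|\lambda^n - 1| \geq c\,|n|_F^s$ when $q \mid n$; combined with the characteristic-zero control on $|n!|_F$, this forces the formal series for $g$ to have positive radius of convergence, giving the Rivera-Letelier linearization theorem. Over $F = \mathbb{C}$, the sharp answer is the Brjuno-Yoccoz dichotomy on the continued-fraction denominators of $\theta = (\log\lambda)/2\pi i$: Diophantine (hence algebraic) $\theta$ give convergence by Siegel-Brjuno, while Liouville $\theta$ give Cremer-type divergence. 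For this case I would simply cite \cite{RL} and \cite{Yoc} rather than reproduce the small-divisor analysis, since any self-contained treatment would essentially recapitulate those papers.
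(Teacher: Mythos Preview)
The paper does not prove Theorem~\ref{infodump} at all: it is stated as a summary of known results, with the citation \cite{RL, Yoc} attached directly to the theorem header, and the exposition moves on immediately to the Herman--Yoccoz higher-dimensional result. So there is no ``paper's own proof'' to compare against; your sketch already goes well beyond what the paper supplies.

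That said, your outline is a reasonable roadmap of the standard arguments. A couple of cautionary remarks if you intend to flesh it out. First, in the parabolic case $\lambda = 1$ over $\mathbb{C}$, analytic conjugacy to the exact polynomial $z + z^{e}$ is \emph{not} generally available on a full neighborhood of $0$: the formal normal form carries the extra invariant (the r\'esidu it\'eratif), and analytic conjugacy to a polynomial model holds only on sectors (attracting petals). The theorem as stated in the paper is therefore somewhat loose on this point, and your ``iterative conjugation to reach $z + z^{e}$'' would need to be qualified accordingly; the nonarchimedean case in \cite{RL} is where the cleaner statement actually holds. Second, for the superattracting case, the B\"ottcher coordinate requires extracting roots of power series, which is unproblematic in characteristic $0$ as you note, but you should be aware that the conjugating $g$ itself may only be defined over a finite extension of $F$. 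Neither of these is a fatal gap in a sketch whose stated endpoint is ``cite \cite{RL} and \cite{Yoc}'', which is precisely what the paper does.
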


In higher dimension, Herman-Yoccoz prove in~\cite{HY} that,

\begin{thm}\label{hy}Suppose $\varphi: \mathbb{P}^{n} \to \mathbb{P}^{n}$ and $\varphi(x) = x$. If $x$ has distinct, multiplicatively independent multipliers, then $\varphi$ is formally linearizable at $x$. If moreover $F$ is nonarchimedean, $\chr F = 0$, and the multipliers are algebraic numbers, then $\varphi$ is locally linearizable.\end{thm}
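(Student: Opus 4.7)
The plan is classical: solve the cohomological equation for the change of coordinates order by order, then convert formal to analytic using a Diophantine lower bound on the small divisors $\lambda^I - \lambda_j$.

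After translating $x$ to $0$ in an affine chart and diagonalizing $A := \varphi_*|_{T_0}$ (possible because the $\lambda_i$ are distinct), we may write $\varphi(y) = Ay + f(y)$ with $A = \operatorname{diag}(\lambda_1, \ldots, \lambda_n)$ and $f$ of order $\geq 2$. We seek $g(y) = y + h(y)$ with $h$ of order $\geq 2$ solving $\varphi \circ g = g \circ A$, equivalently
\[
h(Ay) - A\,h(y) = f(g(y)).
\]
Expanding $h_j(y) = \sum_{|I| \geq 2} c_I^{(j)}\, y^I$ and equating coefficients of $y^I$ gives the triangular recursion
\[
(\lambda^I - \lambda_j)\, c_I^{(j)} = P_I^{(j)},
\]
where $P_I^{(j)}$ is a polynomial in the Taylor coefficients of $f$ and in the $c_J^{(k)}$ with $|J| < |I|$. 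The multiplicative-independence hypothesis rules out $\lambda^I = \lambda_j$ for $|I| \geq 2$, since any such identity rearranges to a nontrivial relation $\lambda^{I - e_j} = 1$ of total weight at least $1$. The recursion is therefore uniquely solvable, producing the formal linearization.

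For the second statement, assume $F$ is nonarchimedean, $\chr F = 0$, and the $\lambda_i$ are algebraic. The key input is a Diophantine estimate
\[
|\lambda^I - \lambda_j|_v \;\geq\; c\,\rho^{\,|I|}
\]
for some $c, \rho > 0$ depending only on the $\lambda_i$ and the ambient place $v$. This follows from standard height theory: $\lambda^I - \lambda_j$ is a nonzero algebraic number whose logarithmic Weil height is $O(|I|)$, and the product formula then gives $|\alpha|_v \geq H(\alpha)^{-[K:\mathbb{Q}]}$ for any nonzero $\alpha \in K^\times$. Substituting into the recursion and running a standard majorant-series induction produces geometric bounds $|c_I^{(j)}| \leq M^{|I|}$ for some $M$, hence convergence of $g$ in the polydisk of radius $M^{-1}$.

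The main obstacle is this analytic step. Multiplicative independence is purely qualitative, and upgrading it to the quantitative bound $|\lambda^I - \lambda_j|_v \geq c\rho^{|I|}$ relies on both algebraicity and the nonarchimedean product formula; it is precisely this single-exponential lower bound that the majorant argument can absorb. Over $\mathbb{C}$ the same height input leads only to the small-divisor regime of Theorem~\ref{infodump}, demanding Siegel/Bruno-type conditions. The core technical work is to run the majorant induction so that the factor $\rho^{-|I|}$ coming from each division does not defeat the geometric growth of the $P_I^{(j)}$.
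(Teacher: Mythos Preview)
The paper does not prove this theorem; it is stated with attribution to Herman--Yoccoz \cite{HY} as background for the introduction. Your sketch is the classical Poincar\'e--Siegel scheme and is essentially the Herman--Yoccoz argument: kill resonances formally via the cohomological recursion, then upgrade to convergence using a Diophantine lower bound on the small divisors.

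Your outline is sound. The formal step is correct (and note that multiplicative independence already forces distinctness, so the diagonalization is automatic). The height/product-formula derivation of $|\lambda^I - \lambda_j|_v \geq c\rho^{|I|}$ is exactly how one extracts a Diophantine condition from algebraicity, and a single-exponential lower bound is precisely what the majorant induction can absorb. You are right to flag the majorant step as the substantive part; in the nonarchimedean setting the ultrametric inequality greatly simplifies the combinatorics of bounding $P_I^{(j)}$, which is why Herman--Yoccoz need only this crude bound rather than a Bruno-type condition. For comparison, the paper's Lemmas~\ref{formsep} and~\ref{anal1} run a closely related but easier recursion (building a fixed subvariety rather than a full linearization), and the proof of Lemma~\ref{anal1} illustrates concretely how an exponential small-divisor bound propagates through such a recursion in the ultrametric case.
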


The linearization, and the associated analytic coordinate changes to $z^{e}$ and $z + z^{e}$ when linearization is not possible, is a key tool in studying local dynamics; see~\cite{D1C, RL} for more background in $1$ dimension. In~\cite{ABR}, Amerik-Bogomolov-Rovinsky apply the linearization tool of~\cite{HY} to questions of local behavior, with an eye toward proving results in algebraic geometry. One particular application of interest is the question of isolation:

\begin{defn}Let $X$ be a smooth projective variety over a complete valued field $F$, let $\varphi: X \to X$ be a dynamical system defined over $F$, and let $x$ be a fixed point. We say $x$ is \textbf{isolated} if there exists a neighborhood of $x$ containing no periodic points other than $x$.\end{defn}

This question has been extensively studied in $1$ dimension. The best results are below:

\begin{thm}\label{infodumpsio}\cite{Yoc, RL, Bez, LRL, LRL2} Let $\varphi: \mathbb{P}^{1}_{F} \to \mathbb{P}^{1}_{F}$. If $F = \mathbb{C}$ then $0$ is isolated if and only if $|\lambda| < 1$, or $|\lambda| = 1$ and $\varphi$ is locally linearizable. If $\chr F = 0$ and $F$ is nonarchimedean, then $0$ is isolated if and only if $|\lambda| \leq 1$. If $\chr F > 0$, then $0$ is isolated if $|\lambda| < 1$, or if $|\lambda| = 1$ and $\lambda$ is not a root of unity; if $\lambda$ is a root of unity, then for generic $\varphi$, $0$ is isolated; if $|\lambda| > 1$ and $\chr F > \deg\varphi$ then $0$ is not isolated.\end{thm}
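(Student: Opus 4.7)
The plan is to split into the three regimes $|\lambda|<1$, $|\lambda|=1$, $|\lambda|>1$, refining by characteristic where needed, since the mechanism behind isolation or its failure is different in each. In the attracting regime $|\lambda|<1$ the argument is uniform over all $F$. Choose coordinates in which $\varphi$ is a strict contraction on a small disk $D$ around $0$: $|\varphi(z)|<|z|$ for $z\in D\setminus\{0\}$ in the nonarchimedean case, or $|\varphi(z)|\le c|z|$ with $c<1$ after adjusting the norm in the archimedean case. If $z\in D$ satisfies $\varphi^n(z)=z$, then $|z|\le c^n|z|$, forcing $z=0$, so $0$ is isolated.

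In the neutral regime $|\lambda|=1$ with $\lambda$ not a root of unity, isolation is equivalent to formal linearization being realized analytically. Where Theorem~\ref{infodump} provides a local linearization, passing to coordinates in which $\varphi(z)=\lambda z$ makes the period-$n$ equation $\lambda^n z=z$, which has only $z=0$ as a solution, so $0$ is isolated. In the archimedean Liouville regime, where linearization fails, Cremer's classical construction runs the other way: the Diophantine obstruction to linearization is precisely what produces periodic cycles of rapidly growing period accumulating at $0$. When $\lambda$ is a root of unity of order $m$, any period-$n$ periodic point $z\ne 0$ near $0$ must in fact be a fixed point of $\varphi^m$, since for $m\nmid n$ the factor $\lambda^n-1$ is invertible and forces $z=0$. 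I therefore reduce to studying fixed points of $\varphi^m$ through the normal form $\varphi^m(g(z))=g(z+z^e)$ of Theorem~\ref{infodump}. In characteristic zero, $(z+z^e)^{\circ k}-z = kz^e+O(z^{e+1})$ has no nonzero root close to $0$, yielding isolation; in characteristic $p$, iterating by $p$ kills the leading $k$, so extra fixed points can arise for special $\varphi$, accounting for the only-generic statement.

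For the repelling regime $|\lambda|>1$ I would invoke accumulation-of-periodic-points results. Over $\mathbb{C}$ the fixed point lies in the Julia set, which by Fatou-Julia is the closure of the repelling periodic points, so $0$ is not isolated. Over a nonarchimedean field of characteristic zero, Rivera-Letelier's inverse-iteration method applies: a local analytic inverse of $\varphi$ near $0$ is a contraction, and pulling back any periodic point of $\varphi$ lying outside a small neighborhood produces periodic points arbitrarily close to $0$. The same argument survives in characteristic $p>\deg\varphi$, because this hypothesis rules out inseparability of the local inverse.

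The main obstacle I expect is the positive-characteristic root-of-unity case, where the normal form $z+z^e$ by itself is not enough and one must track higher coefficients of the formal conjugacy class of $\varphi^m$ under the Frobenius-sensitive group of formal changes of variable. This is where essentially all of the technical content of~\cite{LRL, LRL2} lives, and is why the theorem claims only generic isolation rather than an if-and-only-if in the wild case.
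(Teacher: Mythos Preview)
The paper does not give a proof of this theorem: it is stated as a summary of known one-dimensional results, with citations to \cite{Yoc, RL, Bez, LRL, LRL2}, and no argument is supplied. The only piece the paper revisits is the nonarchimedean characteristic-zero indifferent case, reproved as Theorem~\ref{rl} via a Newton polygon bound on $v(\lambda^{k}-1)$ and $v(k)$. So there is no ``paper's own proof'' to compare your proposal against.

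That said, two steps in your sketch are genuine gaps rather than compressions. First, in the root-of-unity case your reduction to $\varphi^{m}$ is argued incorrectly: the fact that $\lambda^{n}-1\neq 0$ when $m\nmid n$ only says that $0$ is a \emph{simple} root of $\varphi^{n}(z)-z$, not that it is the only root in a fixed neighborhood. The correct reduction is the trivial one: every $\varphi$-periodic point is $\varphi^{m}$-periodic, so isolation for $\varphi^{m}$ implies isolation for $\varphi$. Second, in the parabolic characteristic-zero case, writing $(z+z^{e})^{\circ k}-z = kz^{e}+O(z^{e+1})$ and saying it ``has no nonzero root close to $0$'' is not enough: the size of that neighborhood a priori depends on $k$, and isolation requires a bound uniform in $k$. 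This is exactly the content of the Newton polygon argument in Theorem~\ref{rl}: one shows $v(k)$ grows only logarithmically in $k$, so the slope of the first segment is bounded independently of $k$. Your repelling nonarchimedean sketch is also too loose---pulling back a single exterior periodic point by a local inverse branch does not by itself produce \emph{periodic} points near $0$; the actual arguments in \cite{Bez, RL} are more delicate---but since the paper does not engage with that case either, this is less relevant here.
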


It is conjectured in~\cite{LRL, LRL2} that if $\chr F > 0$ and $\lambda$ is a root of unity, then $0$ is still isolated.

In this paper, we study local dynamics in several variables. While the question of linearization has already been investigated in several variables, the question of whether periodic points are isolated remains unsolved. In this paper, we prove,

\begin{thm}\label{isolatedper}Let $\varphi: \mathbb{P}^{n} \to \mathbb{P}^{n}$ be an algebraic morphism defined over a complete nonarchimedean field $F$ of characteristic $0$, and let $x \in \mathbb{P}^{n}(\overline{F})$ be a fixed point under $\varphi$. If the multipliers of $x$ are all attracting or indifferent, then $x$ is isolated as a periodic point -- that is, there exists a neighborhood of $x$ in the analytic topology on $F$ containing no periodic points except $x$.\end{thm}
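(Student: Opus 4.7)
The plan is to apply the main formal fixed-subvariety theorem from the abstract to separate the attracting directions from the indifferent ones, use contraction in the attracting directions to trap every nearby $\varphi$-periodic point on a local-analytic center manifold, and then prove isolation directly on that manifold where every multiplier is indifferent.

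Relabel so that $|\lambda_1|=\cdots=|\lambda_s|=1$ and $|\lambda_{s+1}|,\ldots,|\lambda_n|<1$. The multiplicative group generated by the indifferent multipliers consists only of elements of norm $1$, so it contains no attracting multiplier. Applying the main theorem with the attracting multipliers in the ``separated'' role yields a fixed formal subvariety $W$ through $x$ of dimension $s$, tangent at $x$ to the indifferent eigenspace; by the mild-hypothesis clause $W$ is local-analytic. Choose analytic coordinates $(u,v)$ near $x$ with $u\in F^{n-s}$, $v\in F^{s}$, and $W=\{u=0\}$. Because $\varphi(W)\subseteq W$, the $u$-component of $\varphi$ factors as $u\cdot G(u,v)$ with $G(0,0)$ equal to the attracting linear part, which has spectral radius $c<1$. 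Using the nonarchimedean agreement of spectral and operator norms in a suitable basis, shrink to a polydisk $P=\{|u_i|<\epsilon,\,|v_j|<\epsilon\}$ on which $\|G\|<1$ uniformly and the indifferent block has operator norm at most $1$. Then $\varphi(P)\subseteq P$ and $|\pi_u\varphi^m(u,v)|\leq c^m|u|$ for every $m\geq 1$, forcing every $\varphi$-periodic point in $P$ to have $u=0$ and hence to lie on $W$.

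It remains to isolate $x$ as a periodic point of the analytic self-map $\varphi|_W$, where every multiplier has absolute value $1$. If no $\lambda_i$ for $i\leq s$ is a root of unity, iterated applications of the main theorem on $W$ -- each step splitting off a subset of multipliers that lies outside the multiplicative group generated by the rest -- produce a nested family of fixed analytic subvarieties cutting $W$ down to $\{x\}$, amounting to a local-analytic linearization in the spirit of Theorem~\ref{hy}; in such a linearization any $\varphi^m$-fixed point satisfies $\lambda_i^m=1$ for every $i$, so must equal $x$. If some $\lambda_i$ is a root of unity, let $N$ kill every torsion indifferent multiplier and pass to $\varphi^N$: the previous iteration on $\varphi^N$ separates off its non-unit multipliers by a fixed analytic subvariety and restricts attention to a tangent-to-identity subvariety $W_1\subseteq W$. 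Because $\varphi^N$ is an algebraic morphism of $\mathbb{P}^n$ of degree at least $2$, its global fixed scheme is finite, and a Newton-polygon / iterative-logarithm argument in the style of Rivera-Letelier yields a single radius inside $W_1$ in which no iterate of $\varphi^N$ has any fixed point other than $x$, i.e., no $\varphi$-periodic point other than $x$. The main technical obstacle is precisely this last uniformity across all periods in the tangent-to-identity setting -- the higher-dimensional counterpart of the dimension-one analyses in~\cite{RL,Bez}.
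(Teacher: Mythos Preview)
Your reduction to the indifferent center manifold $W$ (separating off the attracting multipliers via Lemmas~\ref{formsep}--\ref{anal1}, contracting onto $W$ via Proposition~\ref{rate}, and using finiteness of $\Fix(\varphi^k)$ on $\mathbb{P}^n$ to exclude a positive-dimensional pointwise-fixed locus) matches the paper exactly. The gap is entirely in how you handle $\varphi|_W$. Your ``no root of unity'' case invokes iterated separation culminating in a linearization \`a la Theorem~\ref{hy}, but that theorem requires \emph{multiplicatively independent} multipliers; with a resonance such as $\lambda_1=\lambda_2$ among the indifferent eigenvalues, no subset lies outside the semigroup generated by its complement, the iteration cannot even start, and $\varphi|_W$ need not be linearizable. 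Your ``some root of unity'' case passes to a tangent-to-identity $W_1$ by separating off the non-unit indifferent multipliers, but the paper explicitly warns that this separation is in general only \emph{formal}: Lemma~\ref{anal1} needs $|\lambda_i-1|>c^{1/d}$ for all $d$, which is impossible once $|\lambda_i-1|<1$, as it is after replacing $\varphi$ by an iterate over any local field with algebraic residue field. Even granting an analytic $W_1$, you give no mechanism confining periodic points of $W$ to $W_1$, since the transverse directions are indifferent, not attracting.

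The paper therefore attempts no further separation on $W$. Theorem~\ref{isolatedindiff} treats the full indifferent manifold at once, irrespective of multiplicative relations or torsion among the $\lambda_i$: one shows that the minimal-polydegree terms (Definition~\ref{minpoly}) of the family $\varphi_{i,k}-x_i$ have bounded polydegree and valuations growing like $\log_p k$ in characteristic~$0$, and feeds this into Rabinoff's multivariable Newton-polytope root count (Theorem~\ref{rabinoff}, via the mixed-volume estimate of Corollary~\ref{volbound} and Lemma~\ref{implies}) to bound the number of period-$k$ points of large polyvaluation below $k$. The Rivera-Letelier-style step you flag as ``the main technical obstacle'' is thus the entire content of the indifferent case, and it must be carried out on all of $W$, not on a tangent-to-identity slice.
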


Theorem~\ref{isolatedper} is a step toward proving a conjecture of Zhang~\cite{Zha}:

\begin{conj}\label{introzhang}(Zhang) Let $\varphi: \mathbb{P}^{n} \to \mathbb{P}^{n}$ be a morphism defined over a number field $K$. Then there exists a point $z \in \mathbb{P}^{n}(\overline{K})$ whose forward orbit under $\varphi$ is Zariski-dense.\end{conj}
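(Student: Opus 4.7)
The plan is to apply Theorem~\ref{isolatedper} in a $p$-adic setting to locate points with Zariski-dense orbits. Fix an embedding of $\overline{K}$ into an algebraic closure $\overline{K_{v}}$ at a prime $v$ of good reduction for $\varphi$, and work over $F = \overline{K_{v}}$. The general strategy, which I expect to yield two special cases of the conjecture rather than the full statement, is to find an \emph{isolated} periodic point at $v$ and then argue that a generic $\overline{K}$-rational point nearby has Zariski-dense orbit.

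First, produce a periodic point $x$ of some iterate $\varphi^{m}$ whose multipliers at $v$ are all attracting or indifferent: one approach is to reduce $\varphi$ modulo $v$, use a Lefschetz-trace count of fixed points of $\varphi^{m}$ on the special fiber, and lift via Hensel's lemma to a periodic point over $\overline{K}$. Fixed points arising this way have multipliers integral at $v$, i.e.\ $|\lambda_{i}|_{v} \leq 1$. After passing to an iterate we may take $x$ to be fixed, and Theorem~\ref{isolatedper} then provides an analytic neighborhood $U \ni x$ containing no other periodic point of $\varphi^{m}$. If the multipliers are moreover multiplicatively independent, Theorem~\ref{hy} linearizes the dynamics on $U$ to a diagonal action, so one can explicitly describe the $\varphi$-invariant analytic subvarieties meeting $U$.

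The crucial geometric step is to show that points of $U(\overline{K})$ with non-Zariski-dense orbit lie in a proper analytic subvariety of $U$. Any such point has orbit closure $Y \subsetneq \mathbb{P}^{n}$, and $Y$ is $\varphi$-forward-invariant. In linearized coordinates $Y \cap U$ is an analytic subvariety invariant under the diagonal action, and when the $\lambda_{i}$ are multiplicatively independent one checks that the only such subvarieties are unions of coordinate flats; the set of bad points in $U$ therefore sits inside a finite union of proper algebraic subvarieties defined over $\overline{K}$, and one picks a $\overline{K}$-point avoiding all of them.

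The main obstacle is the very first step: for general $\varphi$ there is no reason to expect a periodic point with attracting or indifferent multipliers at a chosen prime, and even less so with multiplicatively independent multipliers. This is why the argument should succeed only in two cases. The cleanest is when $\varphi$ already admits a fixed point over $K$ with small multipliers at some $v$, which reduces the problem to a direct application of the above strategy. A more substantive case is when a geometric condition on $\varphi$ (such as polarization by an ample class, or the presence of a contracted boundary divisor) forces the needed multiplier structure to arise on some iterate. The value added by the present paper is precisely that indifferent multipliers suffice, whereas the Herman-Yoccoz setup used in~\cite{ABR} requires the stronger multiplicative-independence condition, which is harder to verify in practice and is the limiting input in extending Amerik-Bogomolov-Rovinsky's results.
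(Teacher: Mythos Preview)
The statement you are attempting to prove is stated in the paper as an open \emph{conjecture}; the paper does not prove it, and only establishes the two special cases recorded in Theorem~\ref{zhangcases}. So there is no ``paper's own proof'' of the full statement to compare against, and your sketch, even on its own terms, does not reach the general case.

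More importantly, your proposed route to the special cases misidentifies both the tools and the contribution. You center the argument on Theorem~\ref{isolatedper}, but the paper explicitly notes that nothing from Section~\ref{isolated} is used in Section~\ref{zhang}; the Zhang-conjecture results rest instead on Lemmas~\ref{formsep} and~\ref{anal1} (separation of directions into a fixed analytic subvariety $V$), combined with Proposition~\ref{rate} to push nearby orbits onto $V$ and then Theorem~\ref{abr} on $V$. Your final paragraph also inverts what is new: the result of~\cite{ABR} (Theorem~\ref{abr}) already covers all nonrepelling, multiplicatively independent multipliers, so ``indifferent suffices'' is not the point. The paper's advance is precisely in certain \emph{multiplicatively dependent} configurations --- one multiplier equal to $0$ with the rest independent (Theorem~\ref{zero}), and in dimension $2$ one multiplier a root of unity (Theorem~\ref{hack}) --- neither of which your two vague cases capture. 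In particular, your step ``linearize via Theorem~\ref{hy} and observe that invariant analytic sets are coordinate flats'' is exactly the ABR argument and gives nothing beyond it; the substantive work in the paper is to produce and exploit the fixed subvariety $V$ when linearization at $x$ is unavailable.
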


In brief, to prove Conjecture~\ref{introzhang}, we could prove that a fixed point $x$ is isolated not just as a periodic point, but also as a point with non-dense Zariski orbit. Specifically, in many cases, we can show that there exists an analytic neighborhood of a fixed point $x$ in which all points outside a finite union of hypersurfaces have forward orbits that cannot be contained in any reasonable analytic subvariety, including all the ones arising from local analytic deformations of algebraic subvarieties. In~\cite{ABR}, Conjecture~\ref{introzhang} is proven in case the multipliers of $x$ are multiplicatively independent.

If the multipliers are not multiplicatively independent, then it is instead possible that all points near a fixed point fall into fixed analytic subvarieties. As an example, if $\varphi(x_{0}:\ldots:x_{n}) = (x_{0}^{d}:\ldots:x_{n}^{d})$, then near the fixed point $x = (1:\ldots:1)$, any subvariety defined by monomial equations, of the form $x_{0}^{\alpha_{0}}\ldots x_{n}^{\alpha_{n}} = 1$, is fixed; such a subvariety is algebraic if all exponents $\alpha_i$ are rational, but can also be defined analytically near $x$ for any $(\alpha_{0}:\ldots:\alpha_{n}) \in \mathbb{P}^{n}_{F}$. In order to find a $\overline{\mathbb{Q}}$-rational point with Zariski-dense image, we rely on the fact that the linearization of the power map, the logarithm, doesn't map any algebraic numbers to algebraic numbers (except sending $1$ to $0$), and if we pick multiplicatively independent algebraics $x_{0}, \ldots, x_{n}$, they will only satisfy the condition $x_{0}^{\alpha_{0}}\ldots x_{n}^{\alpha_{n}} = 1$ for transcendental $\alpha_{i}$. To generalize this argument to other morphisms with fixed points with multiplicatively dependent multipliers requires generalizing the various theorems on the independence of logarithms to dynamical linearization maps.

However, in some cases of multiplicatively dependent multipliers, we are capable of proving Conjecture~\ref{introzhang} anyway. In fact, we show the following:

\begin{thm}\label{zhangcases} Let $\varphi: \mathbb{P}^{n} \to \mathbb{P}^{n}$ be a morphism defined over a number field $K$. Then there exists a point $z \in \mathbb{P}^{n}(\overline{K})$ whose forward orbit under $\varphi$ is Zariski-dense, provided one of two sufficient conditions is met:

\begin{enumerate}
\item There exists a periodic point $x$ one of whose multipliers is equal to $0$ and the rest are multiplicatively independent.
\item $n = 2$, and there exists a periodic point $x$ exactly one of whose multipliers is a root of unity.
\end{enumerate}\end{thm}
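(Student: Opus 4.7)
The plan is to adapt the Amerik-Bogomolov-Rovinsky argument of \cite{ABR} by using the paper's formal/analytic invariant subvariety theorem (announced in the abstract) in place of the full Herman-Yoccoz linearization. In the ABR setting, all multipliers of the fixed point $x$ are multiplicatively independent, Theorem~\ref{hy} linearizes $\varphi$ analytically near $x$, the $\varphi$-invariant analytic germs at $x$ are then coordinate subspaces, and one finds a $\overline{K}$-point near $x$ whose $\varphi$-orbit avoids every such germ, forcing Zariski density. In each of our cases one multiplier is ``bad''---zero in Case 1, a root of unity in Case 2---which obstructs full linearization; the idea is to peel off that bad direction via the subvariety theorem and handle its dynamics through the one-dimensional normal forms of Theorem~\ref{infodump}.

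After replacing $\varphi$ by an iterate and passing to the completion $F=K_v$ at a finite place over which $x$ and its multipliers are defined, assume $x$ is fixed and (in Case 2) $\lambda_1=1$. For Case 1 the subvariety theorem, applied with $\lambda_1=0$ peeled off from the multiplicatively independent $\lambda_2,\dots,\lambda_n$, provides an invariant analytic hypersurface $H$ on which $\varphi$ has multipliers $\lambda_2,\dots,\lambda_n$ (hence is analytically linearized by Theorem~\ref{hy}), together with an invariant analytic curve $C$ on which $\varphi$ is conjugate to $y\mapsto y^e$ by Theorem~\ref{infodump}. For Case 2 the analogous input yields invariant analytic curves $C_1$ conjugate to $y\mapsto y+y^e$ and $C_2$ linearizable to $y\mapsto\lambda_2 y$. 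In either case I obtain analytic coordinates $(y_1,\dots,y_n)$ on a $v$-adic polydisk $U$ around $x$ in which these distinguished invariant subvarieties are the coordinate subspaces $\{y_1=0\}$ and $\{y_2=\cdots=y_n=0\}$.

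The heart of the proof is to classify germs of $\varphi$-invariant analytic subvarieties at $x$. In Case 1, the restriction of such a germ to $H$ is invariant under the diagonal action of multiplicatively independent $\lambda_2,\dots,\lambda_n$, which must be a coordinate subspace (there are no nontrivial monomial invariants); the super-attraction $y_1\mapsto y_1^e$ transverse to $H$ then forces the full germ to be one of finitely many coordinate subspaces. In Case 2, one must directly rule out invariant analytic graphs $y_2=\sum_{k\ge 1}a_k y_1^k$ and $y_1=\sum_{k\ge 1}b_k y_2^k$ under $(y_1,y_2)\mapsto(y_1+y_1^e,\lambda_2 y_2)$; the resulting cohomological equation should force all coefficients to vanish since $\lambda_2$ is not a root of unity (and small-divisor phenomena are absent over nonarchimedean $F$ of characteristic zero), leaving only $\{y_1=0\}$ and $\{y_2=0\}$ as nontrivial invariant germs. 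Given this finite classification, any proper algebraic subvariety $V\subset\mathbb{P}^n$ containing the forward $\varphi$-orbit of a $\overline{K}$-point $z_0\in U$ with all $y_i(z_0)\neq 0$ must (for $U$ chosen small enough that $V$ cannot meet $U$ without passing through $x$) have $\varphi$-invariant germ at $x$; the classification then contradicts $y_i(z_0)\neq 0$, forcing $V=\mathbb{P}^n$ and hence the orbit is Zariski-dense.

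The principal obstacle is the germ classification in Case 2: setting up the cohomological equation for the coefficients of a putative invariant graph and extracting from the non-root-of-unity hypothesis on $\lambda_2$ the vanishing of every coefficient, despite the very slow parabolic dynamics $y_1\mapsto y_1+y_1^e$. A careful filtration by the $v$-adic valuation of $y_1$, combined with the fact that $\lambda_2^k-1$ stays $v$-adically bounded away from zero as $k$ varies, should suffice, but it is here that the restriction to $n=2$ in Case 2 appears essential: in higher dimensions one would have to rule out far more intricate resonance patterns among the remaining non-root-of-unity multipliers.
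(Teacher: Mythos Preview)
Your proposal has a genuine gap at its pivot: you assert that the Zariski closure $V$ of the orbit of $z_0$ must have a germ at $x$, justified by ``$U$ chosen small enough that $V$ cannot meet $U$ without passing through $x$.'' But $V$ is not given in advance; it depends on $z_0$, which depends on $U$, so you cannot shrink $U$ against it. And there is no reason $V$ should contain $x$: the orbit of $z_0$ stays in the polydisk, but its Zariski closure can be a curve or hypersurface that meets the polydisk without passing through the origin. The paper's argument is structured quite differently precisely to avoid this. In Case~1 it chooses $v$ so that the zero multiplier is genuinely \emph{attracting} toward the invariant hypersurface $H=\{x_1=0\}$; then by Proposition~\ref{rate} and compactness of the $v$-adic polydisk, any analytic $W$ containing the orbit must meet $H$, and $W\cap H$ is a proper $\varphi|_H$-invariant analytic subvariety, which the ABR theorem forces into a coordinate hyperplane $\{x_i=0\}$ of $H$; this contradicts $|\varphi_{i,k}(z)|=|z_i|\neq 0$. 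In Case~2 (nonarchimedean) the intersection $W\cap\{x_1=0\}$ is finite and $\varphi$-invariant, hence periodic, which contradicts the one-dimensional isolation Theorem~\ref{rl} since the orbit keeps $|x_2|$ bounded away from $0$. The key mechanism is attraction to an invariant subvariety plus compactness, not a classification of germs at $x$.

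Two further points. First, your fully decoupled normal form $(y_1,y_2)\mapsto(y_1+y_1^e,\lambda_2 y_2)$ is not available: one can make the parabolic coordinate autonomous, but the paper explicitly notes that eliminating the cross-terms from the \emph{other} coordinate would require dividing by $1-\lambda_1^{d}=0$, so $\varphi$ on the linearizable side retains a term $y_1y_2\,g$. Second, in Case~2 you restrict to finite places, but the non-root-of-unity multiplier may be a $v$-adic unit at every finite place (any algebraic unit that is not a root of unity). The paper therefore chooses $v$ so that this multiplier is \emph{attracting}, which may force $F=\mathbb{C}$; in that archimedean case the orbit is attracted to $x$ itself along a parabolic petal, and the paper rules out an invariant curve through $x$ by a minimal-polydegree divisibility argument (if $g$ cuts out $W$ and $h=g\circ\varphi$, then $g\mid h$ forces all minimal-polydegree monomials of $g$ to share the same $x_1$-exponent, which is impossible). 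Your cohomological-equation sketch does not address this archimedean situation, and the nonarchimedean restriction would leave that case uncovered.
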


We prove both Theorem~\ref{isolatedper} and Theorem~\ref{zhangcases} by using local analytic variable changes to do something similar to a linearization. We cannot quite linearize $\varphi$, since it is not always possible when the multipliers are multiplicatively dependent, and definitely not possible when some of the multipliers are zero or roots of unity. But we can still perform local analytic changes that replace $\varphi$ with a simpler map. In particular, we can separate many groupings of multipliers and treat each separately: we can separate $0$ from nonzero multipliers, or separate \textbf{repelling} multipliers (those with $|\lambda_{i}| > 1$) from nonrepelling multipliers, or separate \textbf{attracting} multipliers (those with $|\lambda_{i}| < 1$) from nonattracting ones. This generalizes well-known results in hyperbolic dynamics in several complex variables on separating attracting and repelling directions; see for example~\cite{Yoc2}. Among the \textbf{indifferent} multipliers (those with $|\lambda_{i}| = 1$), we can formally separate the \textbf{irrationally indifferent} ones (those that are not roots of unity) from the \textbf{rationally indifferent} ones (those that are roots of unity), but the formal conjugacy may fail to converge in any open neighborhood of $x$.

We describe this method of separating directions in detail in Section~\ref{separating}. We show when we can separate directions formally, and also discuss sufficient conditions to obtain a positive radius of convergence, in both the complex topology and nonarchimedean topologies. This is encoded in Lemmas~\ref{formsep} and~\ref{anal1}.

In Section~\ref{isolated}, we use the machinery we develop in Section~\ref{separating} to prove Theorem~\ref{isolatedper}. The theorem is trivial if all multipliers are attracting, so we separate attracting and indifferent directions, and prove the result for attracting directios only, which is computationally easier. Here, a critical step relies on the so-called \textbf{Newton polytope}, a generalization of the one-dimensional Newton polygon developed in some detail in the work of Rabinoff (\cite{Rab}). This generalizes one-dimensional results of Rivera-Letelier~\cite{RL}, and provides a framework that could be used to prove a generalization of Theorem~\ref{isolatedper} in positive characteristic, which is conjectured also in one dimension but only proven in the generic case and not in all cases (see~\cite{LRL, LRL2}).

Finally, in Section~\ref{zhang}, we prove Theorem~\ref{zhangcases}. Proving Theorem~\ref{zhangcases} in case the first condition is met is a straightforward application of the concept of separating attracting and indifferent directions, where we use the $v$-adic topology where $v$ is any nonarchimedean place of $K$ such that the nonzero multipliers of the periodic point $x$ are all indifferent. The second condition is more delicate: in the worst case, we may have to work in the complex topology, and locally conjugate $\varphi = (\varphi_{1}, \varphi_{2})$ to the form $\varphi_{2}(x_{1}, x_{2}) = x_{2} + x_{2}^{e}$. Since we need to work with complex variables and not just nonarchimedean variables, we develop the method of Section~\ref{separating} in the complex case and not only in the easier nonarchimedean case.

We do not use any of the results from Section~\ref{isolated} in Section~\ref{zhang}; we only use one definition, Definition~\ref{minpoly}, in one part of the proof of the second case of Theorem~\ref{zhangcases}. Theorems~\ref{isolatedper} and~\ref{zhangcases} are two separate applications of Lemmas~\ref{formsep} and~\ref{anal1}. Thus, the reader may read Sections~\ref{isolated} and~\ref{zhang} in either order.

\textbf{Acknowledgements.} The author would like to thank Shouwu Zhang and Tom Tucker for reading a draft of the paper and for helpful comments. The author would also like to thank P\"{a}r Kurlberg for many productive discussions in the early stages of this project. This research was funded by the G\"{o}ran Gustafsson Foundation.

\section{Separating Directions}\label{separating}

From this section on, we will work over an arbitrary field $F$, in dehomogenized coordinates, unless stated otherwise. We will dehomogenize with respect to $x_{0}$, in order to retain the notation $x_{1}, \ldots, x_{n}$. Therefore, if $\psi: \mathbb{P}^{n} \to \mathbb{P}^{n}$ is in homogeneous coordinates $\psi = (\psi_{0}:\ldots:\psi_{n})$, we write $\varphi = (\varphi_{1}, \ldots, \varphi_{n})$ with $\varphi_{i} = \psi_{i}/\psi_{0}$. The coordinate maps $\varphi_{i}$ are rational maps in the field $F(x_{1}, \ldots, x_{n})$. We still assume that $\psi$ is a morphism from $\mathbb{P}^{n}$ to itself; in terms of $\varphi$, this means that the numerators of the $\varphi_{i}$s and the common denominator $\psi_{0}$ do not share a nontrivial common zero. This does not mean $\varphi$ is a morphism from $\mathbb{A}^{n}$ to itself -- in general, it is only a rational map -- but it does mean that it extends to a morphism from $\mathbb{P}^{n}$ to itself.

We assume that $x$ is a fixed point and not just a periodic point, replacing $\varphi$ with an iterate if possible. We also assume that $x = (0, \ldots, 0)$ and that the rational canonical form of the action $\varphi^{*}(T_{x})$ is a Jordan canonical form, replacing $F$ by a finite extension if necessary. In both this section and Section~\ref{isolated} we can assume $F$ is algebraically closed, but in Section~\ref{zhang}, we require $F$ to be locally compact, and therefore we do not assume $F$ is algebraically closed here. Our first result is that we can write the rational functions $\varphi_{i}$ as power series:

\begin{prop}\label{comeon}Let $\psi: \mathbb{P}^{n} \to \mathbb{P}^{n}$, labeling $\psi$ as $(\psi_{0}, \ldots, \psi_{n})$. Suppose $x = (1 : 0 : \ldots : 0)$ is a fixed point. If we dehomogenize by taking $x_{0} = 1$, then $1/\psi_{0}$ is a power series in $x_{1}, \ldots, x_{n}$, and therefore, so is $\varphi_{i} = \psi_{i}/\psi_{0}$. If $F$ has any topology, then these power series all converge in some open neighborhood of $x = (0, \ldots, 0)$, which may be taken to be a polydisk centered at $x$ missing the zero-set of $\psi_{0}$.\end{prop}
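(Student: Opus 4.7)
The plan is straightforward: exploit the fact that $\psi_0$ does not vanish at the fixed point $x$, invert via a geometric series, and then pass to the topological setting.

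First I would check that $c := \psi_0(1,0,\ldots,0) \neq 0$. Since $\psi$ is a morphism and $x = (1:0:\cdots:0)$ is fixed, the image $(\psi_0(x):\cdots:\psi_n(x))$ is also $(1:0:\cdots:0)$, so $\psi_i(1,0,\ldots,0) = 0$ for $i \geq 1$ and $\psi_0(1,0,\ldots,0) \neq 0$. Writing $P(\mathbf{x}) := \psi_0(1,x_1,\ldots,x_n)$, this is a polynomial with $P(\mathbf{0}) = c \neq 0$. Decompose $P = c - Q$ where $Q \in F[x_1,\ldots,x_n]$ has no constant term, so the formal expression
\[
\frac{1}{P(\mathbf{x})} \;=\; \frac{1}{c}\cdot\frac{1}{1 - Q(\mathbf{x})/c} \;=\; \frac{1}{c}\sum_{k=0}^{\infty}\left(\frac{Q(\mathbf{x})}{c}\right)^{k}
\]
makes sense as an element of $F[[x_1,\ldots,x_n]]$: because $Q$ has zero constant term, each monomial $x_1^{a_1}\cdots x_n^{a_n}$ receives contributions only from terms with $k \leq a_1 + \cdots + a_n$, so all coefficients are finite sums. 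Multiplying through by the polynomial $\psi_i(1,\mathbf{x})$ gives the formal power series expansions of the $\varphi_i$.

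For convergence, suppose $F$ carries an absolute value $|\cdot|$. Since $Q$ is a polynomial with $Q(\mathbf{0}) = 0$, continuity gives an $r > 0$ such that $|Q(\mathbf{x})/c| < 1$ uniformly on the open polydisk $D = \{\mathbf{x} : |x_i| < r \text{ for all } i\}$; in the nonarchimedean case one can make this explicit by bounding each monomial of $Q$ individually. On $D$ the geometric series then converges absolutely and uniformly, and its sum is the rational function $1/P(\mathbf{x})$. In particular, $P$ does not vanish on $D$, so $D$ misses the zero-set of $\psi_0$. Multiplying by the polynomial $\psi_i(1,\mathbf{x})$ shows that the power series for each $\varphi_i$ converges on $D$ as well.

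There is really no major obstacle here; this is a formal inversion argument combined with the standard observation that a convergent invertible power series has a convergent inverse. The only point requiring any care is making sure the dehomogenized $\psi_0$ actually has nonzero constant term, which is where the fact that $\psi$ is a morphism (and not merely a rational map) fixing $x$ gets used.
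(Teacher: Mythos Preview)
Your proof is correct and follows essentially the same approach as the paper: both observe that $\psi_0$ has nonzero constant term at the fixed point (so is a unit in the local ring $F[[x_1,\ldots,x_n]]$), then argue convergence on a small polydisk. Your convergence argument via the explicit geometric series bound $|Q/c|<1$ is, if anything, more direct than the paper's variable-freezing argument, but the two are minor variants of the same standard computation.
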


\begin{proof}This is well-known, but we provide a few details. Since $\psi_{i}(x) = 0$ for all $i > 0$, we must have $\psi_{0}(x) \neq 0$. Thus, $1/\psi_{0}$ has a power series expansion around $0$; this is what it means for $F[[x_{1}, \ldots, x_{n}]]$ to be a local ring, with maximal ideal $(x_{1}, \ldots, x_{n})$. Moreover, suppose without loss of generality that $\psi_{0}$ contains $x_{1}, \ldots, x_{r}$-terms. Then, first, $x_{r+1}, \ldots, x_{n}$ are free. Now, fix $x_{2}, \ldots, x_{r}$ to lie in any polydisk $D$ small enough that the $x_{1}$-free terms of $\psi_{0}$ never evaluate to $0$. For each choice of $x_{2}, \ldots, x_{r}$, we obtain a power series in $x_{1}$, whose radius of convergence equals the distance from $0$ to the nearest root. Choose the minimal radius as $(x_{2}, \ldots, x_{n})$ range over $D$, and observe that it is positive. This gives us a polydisk of convergence.\end{proof}

\begin{rem}If $K$ is a global field, then for all but finitely many valuations $v$ of $K$, setting $F = K_{v}$ gives convergence in the open unit polydisk.\end{rem}

In the sequel, we will assume $\varphi = (\varphi_{1}, \ldots, \varphi_{n})$ is in fact a list of power series in $F[[x_{1}, \ldots, x_{n}]]$. Observe that every $\varphi_{i}$ has zero constant term, since $x = (0, \ldots, 0)$ is fixed. If $F$ has a topology, then we apply linear coordinate change to ensure that all terms of all power series, except possibly the linear terms, have absolute value at most $1$; this forces the power series to converge in the open unit polydisk. Finally, we apply linear coordinate change so that the matrix whose $(i, j)$-term is the $x_{j}$-coefficient of $\varphi_{i}$ is in Jordan canonical form; we write $\lambda_{i}$ for the multiplier in the $x_{i}$-direction.

Our tool in this paper is separating groups of tangent directions leading out of $x$, such as attracting and repelling directions, or attracting and indifferent directions. This is encoded in the following result:

\begin{lem}\label{formsep}Suppose $\lambda_{1}, \lambda_{2}, \ldots, \lambda_{r}$ do not lie in the multiplicative semigroup generated by $\lambda_{r+1}, \ldots, \lambda_{n}$. Then there exists a unique formal analytic subvariety $V$ defined by equations $x_{i} = f_{i}(x_{r+1}, \ldots, x_{n})$ for each $i$ such that $V$ is fixed under the action of $\varphi$. $V$ defines a formal conjugacy $x_{i} \mapsto x_{i} - f_{i}$ to a formal map under which the linear subvariety $x_{1} = \ldots = x_{r} = 0$ is fixed, or, in other words, power series $\varphi_{1}, \ldots, \varphi_{n}$ such that if $x_{1} = \ldots = x_{r} = 0$ then $\varphi_{1} = \ldots = \varphi_{r} = 0$.\end{lem}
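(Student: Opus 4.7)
The plan is to construct the $f_i$ as formal power series in $u := (x_{r+1},\ldots,x_n)$ order by order, solving a sequence of cohomological equations whose invertibility is dictated by the non-resonance hypothesis. This is the standard Poincar\'e-Dulac style normal-form approach.

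First I would set up the invariance condition. Write $\varphi(x) = Lx + N(x)$ with $L$ the linear part (in Jordan canonical form by assumption) and $N$ collecting higher-order terms. Splitting coordinates as $x = (z,u)$ with $z = (x_1,\ldots,x_r)$, both $\{u = 0\}$ and $\{z = 0\}$ are $L$-invariant, and $L$ acts on the latter with eigenvalues $\lambda_{r+1},\ldots,\lambda_n$. Denote by $\varphi_z, \varphi_u$ the corresponding components of $\varphi$. The graph $V = \{z = f(u)\}$ with $f = (f_1,\ldots,f_r)$, $f(0) = 0$, is $\varphi$-invariant iff the functional equation
\[
\varphi_z\bigl(f(u),u\bigr) \;=\; f\bigl(\varphi_u(f(u),u)\bigr)
\]
holds identically as a formal power series in $u$.

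Next I would solve this equation inductively on the degree. Write $f_i = \sum_{d \ge 1} f_i^{(d)}$ with $f_i^{(d)}$ homogeneous of degree $d$ in $u$, and expand both sides. Collecting terms of degree $d$, the equation reduces for each $i \le r$ to
\[
\lambda_i \, f_i^{(d)}(u) \;-\; f_i^{(d)}\bigl(L_u u\bigr) \;=\; P_i^{(d)}(u),
\]
where $L_u$ is the $u$-component of $L$ and $P_i^{(d)}$ depends only on $\{f_j^{(e)} : e < d\}$ together with the Taylor coefficients of $\varphi$ up to order $d$. The operator $T_i^{(d)}(g) := \lambda_i g - g \circ L_u$ acts on the space of homogeneous degree-$d$ polynomials in $u$; in the monomial basis $u^{\alpha}$ (with $|\alpha| = d$), ordered lexicographically within each Jordan block and with blocks ordered by eigenvalue, its matrix is upper triangular with diagonal entries $\lambda_i - \lambda_{r+1}^{\alpha_{r+1}}\cdots\lambda_n^{\alpha_n}$. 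The hypothesis that $\lambda_i$ lies outside the multiplicative semigroup generated by $\lambda_{r+1},\ldots,\lambda_n$ says precisely that these diagonals are nonzero for every $\alpha$ with $|\alpha| \ge 1$, so $T_i^{(d)}$ is invertible and $f_i^{(d)}$ exists and is unique. Applied to $d=1$ and singleton $\alpha$'s, the hypothesis forces $f_i^{(1)} = 0$, so $V$ is tangent at $0$ to $\{z=0\}$; the induction then produces a unique $f$.

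The conjugacy statement is then immediate: the substitution $y_i = x_i - f_i(x_{r+1},\ldots,x_n)$ for $i \le r$, $y_j = x_j$ for $j > r$, is a formal change of variables with identity Jacobian at $0$ (since $f$ has no constant or linear part), and it sends $V$ to the linear subvariety $\{y_1 = \cdots = y_r = 0\}$. Invariance of $V$ under $\varphi$ is exactly invariance of this linear subvariety under the conjugated map, i.e.\ the vanishing of its first $r$ components on $\{y_1 = \cdots = y_r = 0\}$. The main technical wrinkle is the bookkeeping for the Jordan blocks: one must check carefully that $L_u u_j \equiv \lambda_j u_j$ modulo strictly earlier basis elements, so that $T_i^{(d)}$ really is triangular with the claimed diagonal. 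Once this is verified, the non-resonance hypothesis does all the real work and the rest is routine.
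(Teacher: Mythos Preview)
Your proposal is correct and follows essentially the same route as the paper: both construct the $f_i$ degree by degree by solving the same cohomological equation, both reduce to the invertibility of the operator $g \mapsto \lambda_i g - g\circ L_u$ on homogeneous polynomials in $u$, and both identify the diagonal entries $\lambda_i - \lambda_{r+1}^{\alpha_{r+1}}\cdots\lambda_n^{\alpha_n}$ as the obstruction, which the non-resonance hypothesis clears. The paper carries this out more computationally (explicit coefficient chasing and three nested inductions), while you phrase it in operator language, but the content is the same.

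One small slip worth flagging: your claim that $P_i^{(d)}$ depends only on $\{f_j^{(e)} : e < d\}$ is not quite right when the linear part on the \emph{first} $r$ coordinates has nontrivial Jordan blocks. If $x_i$ is not at the bottom of its $L_z$-block, the linear part of $\varphi_i$ contributes a term $f_{i+1}^{(d)}(u)$ of the \emph{same} degree $d$ to the left-hand side, so $P_i^{(d)}$ also involves $f_{i+1}^{(d)}$. The fix is easy --- process the indices $i$ within each $L_z$-Jordan-block from the bottom up, so that $f_{i+1}^{(d)}$ is already determined when you solve for $f_i^{(d)}$ --- and this is exactly what the paper calls its ``second induction.'' Your closing remark about Jordan-block bookkeeping only addresses the $L_u$ side (triangularity of $T_i^{(d)}$), not this $L_z$ coupling.
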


\begin{proof}We will construct the defining equations $f_{i}$ for $V$ explicitly. First, we denote the $x_{r+1}^{\alpha_{r+1}}\ldots x_{n}^{\alpha_{n}}$-coefficient of $f_{i}$ by $c_{i, \mathbf{\alpha}}$; we will freely use the notation $\textbf{e}_{j}$ for the vector $\mathbf{\alpha}$ such that $\alpha_{j} = 1$ and all other $\alpha_{k}$s are zero. Now, from the definition of $V$, the lowest-degree terms of $f_{i}$ are quadratic (although in fact if we only assume $V$ is of the form $x_{i} = f_{i}$ with $f_{i}$ having zero constant, we could easily see that $f_{i}$ must have zero linear terms). Let us construct the quadratic terms of $f_{i}$ by using the fact that $V$ is fixed.

We have $\varphi_{i} = f_{i}(\varphi_{r+1}, \ldots, \varphi_{n})$. Assume for now that $x_{i}$ is an eigenvector of $\varphi$ -- that is, it is located at the right, or the bottom, of its Jordan block. Thus, the left-hand side can be written as $\lambda_{i}x_{i} + \mbox{(higher-order terms)}$. Note that these higher-order terms could be quadratic in $F[[x_{r+1}, \ldots, x_{n}]]$, if for example $\varphi_{i}$ contains such terms as $x_{r+1}^{2}$ or $x_{r+1}x_{n}$, but they will not change our computation, as those terms do not involve any $c_{i, \mathbf{\alpha}}$. Now, to find the quadratic terms of $f_{i}(\varphi_{r+1}, \ldots, \varphi_{n})$, we note that the lowest-degree terms of $f_{i}$ are quadratic, and therefore we do not need to plug in the full power series $\varphi_{j}$ into $f_{i}$, but only their linear terms, which are $\lambda_{j}x_{j}$ or $\lambda_{j}x_{j} + x_{j+1}$.

We now work Jordan block by Jordan block, within the group $x_{r+1}, \ldots, x_{n}$. Consider just the first Jordan block, say of size $s$, from $x_{r+1}$ to $x_{r+s}$. The quadratic terms of $f_{i}(\varphi_{r+1}, \ldots, \varphi_{n})$ involving elements of this Jordan block are $$c_{i, \mathbf{e}_{r+j} + \mathbf{e}_{r+k}}(\lambda_{r+j}x_{r+j} + (1-\delta_{js})x_{r+j+1})(\lambda_{r+k}x_{r+k} + (1-\delta_{ks})x_{r+k+1})$$ where $\delta_{js}$ is the Kronecker delta and $j$ and $k$ may be the same or different indices. From this expression we subtract $\lambda_{i}c_{i, \mathbf{e}_{r+j} + \mathbf{e}_{r+k}}x_{r+j}x_{r+k}$; to obtain a unique set of quadratic terms of $f_{i}$, we need to show that the resulting system of linear coefficients of $x_{r+j}x_{r+k}$ is nonsingular. (Other quadratic terms, such as those coming from $\varphi_{i}$, would be the constants in this system of linear equations.) But now, equating $x_{r+1}^{2}$-terms, we obtain the term $c_{i, 2\mathbf{e}_{r+1}}(\lambda_{r+1}^{2} - \lambda_{i})$. Since $\lambda_{i}$ is not in the multiplicative semigroup generated by $\lambda_{r+1}, \ldots, \lambda_{n}$, this is nonzero, and we get a unique $c_{i, 2\mathbf{e}_{r+1}}$.

We apply induction at the first of three levels now. For any $c_{i, \mathbf{e}_{r+j} + \mathbf{e}_{r+k}}$, we obtain the term $c_{i, \mathbf{e}_{r+j} + \mathbf{e}_{r+k}}(\lambda_{r+j}\lambda_{r+k} - \lambda_{i})$ plus terms involving $c_{i, \mathbf{e}_{r+j-1} + \mathbf{e}_{r+k}}$, $c_{i, \mathbf{e}_{r+j} + \mathbf{e}_{r+k-1}}$, and $c_{i, \mathbf{e}_{r+j-1} + \mathbf{e}_{r+k-1}}$. Once we have obtained $c_{i, 2\mathbf{e}_{r+1}}$, we get $2c_{i, 2\mathbf{e}_{r+1}}x_{r+1}x_{r+2} + \lambda_{r+1}\lambda_{r+2}c_{i, \mathbf{e}_{r+1} + \mathbf{e}_{r+2}} - \lambda_{i}c_{i, \mathbf{e}_{r+1} + \mathbf{e}_{r+2}}$ plus some terms not involving any of the coefficients of $f_{i}$. We now have a unique $c_{i, \mathbf{e}_{r+1} + \mathbf{e}_{r+2}}$ since $\lambda_{r+1}\lambda_{r+2} - \lambda_{i} \neq 0$; then by induction we can always replace $r+j$ with $r+j+1$ or $r+k$ with $r+k+1$ and obtain a unique coefficient. This shows that the quadratic coefficients within each Jordan block of $f_{i}$ are unique.

The same argument also works for coefficients that mix different Jordan blocks. Consider the second Jordan block, say of size $t$, from $x_{r+s+1}$ to $x_{r+s+t}$. We can equate $x_{r+1}x_{r+s+1}$-terms and obtain a unique $c_{i, \mathbf{e}_{r+1} + \mathbf{e}_{r+s+1}}$, and apply induction, again with each $c_{i, \mathbf{e}_{r+j} + \mathbf{e}_{r+k}}$ uniquely determining $c_{i, \mathbf{e}_{r+j+1} + \mathbf{e}_{r+k}}$ and $c_{i, \mathbf{e}_{r+j} + \mathbf{e}_{r+k+1}}$. This together gives us unique quadratic coefficients for $f_{i}$.

We now apply a second induction: suppose that $x_{i}$ is not at the end of its Jordan block, but that it falls into a Jordan block $x_{1}, \ldots, x_{u}$, and we have already uniquely determined the quadratic terms of $f_{i+1}, \ldots, f_{u}$. Then the left-hand side of $\varphi_{i} = f_{i}(\varphi_{r+1}, \ldots, \varphi_{n})$ can be written as $\lambda_{i}x_{i} + x_{i+1}$, and moreover we substitute $x_{i+1} = f_{i+1}(x_{r+1}, \ldots, x_{n})$. Since the terms of $f_{i+1}$, of the form $c_{i+1, \mathbf{e}_{r+j} + \mathbf{e}_{r+k}}$, are already uniquely determined, we obtain the same linear equations in the terms $c_{i, \mathbf{e}_{r+j} + \mathbf{e}_{r+k}}$ as in the case in which $x_{i}$ is an eigenvector; the only difference is that we have extra constants, and those again do not change the answer to the question of uniqueness.

The above argument gives us unique quadratic coefficients for each $f_{i}$, from $i = 1$ to $r$. For our third and final induction, suppose we have already uniquely determined all coefficients of every $f_{i}$ up to order $e$, where $e \geq 2$. In $\varphi_{i}$, instead of subtracting $\lambda_{i}x_{i}$ as before, we replace every occurrence of $x_{1}, \ldots, x_{r}$ with $f_{1}, \ldots, f_{r}$, which has the same effect but may be clearer in higher degree. Whenever $x_{1}, \ldots, x_{r}$ occurs in a term of degree more than $1$, we get an expression whose degree-$e$ term depends only on lower-degree $c_{i, \mathbf{\alpha}}$s; concretely, expressions such as $f_{1}^{2}$ and $f_{1}x_{r+1}$ have degree-$e$ terms depending only on degree-less-than-$e$ terms of $f_{1}$. Thus, on the level of degree-$e$ terms, we again obtain a left-hand side equal to either $\lambda_{i}f_{i}$ or $\lambda_{i}f_{i} + f_{i+1}$. As before, we can apply the second induction and reduce to the case of $\lambda_{i}f_{i}$.

We have an equation, true on the level of degree-$e$ terms: $$\lambda_{i}f_{i}(x_{r+1}, \ldots, x_{n}) = f_{i}(\lambda_{r+1}x_{r+1} + (1-\delta_{s1})x_{r+2}, \ldots, \lambda_{n}x_{n})$$ Here, $1-\delta_{s1}$ is $1$ when the Jordan block has length $s > 1$ but $0$ if the Jordan block is trivial. But now we reduce to the case of the first induction: whenever $r + j_{1}, \ldots, r + j_{l}$ are at the heads of their respective Jordan blocks, we have a unique $c_{i, \mathbf{e}_{r + j_{1}} + \ldots + \mathbf{e}_{r + j_{l}}}$, and thence we can uniquely construct the other coefficients, incrementing the index within one Jordan block by one at a time. Here we are using the fact that $\lambda_{i} - \lambda_{r+1}^{\alpha_{r+1}}\ldots \lambda_{n}^{\alpha_{n}} \neq 0$ whenever every $\alpha_{j} \geq 0$ and at least one $\alpha_{j} > 0$, i.e. that $\lambda_{i}$ is not in the multiplicative semigroup generated by $\lambda_{r+1}, \ldots, \lambda_{n}$.

Finally, the equations $x_{i} = f_{i}$ define a formal conjugacy $x_{i} \mapsto x_{i} - f_{i}$. In other words, since the system of formal power series equations $x_{i} = f_{i}$ is invariant under $\varphi$, the system $x_{i} = 0$ is invariant under

\begin{align*}
& (\varphi_{1}(x_{1} + f_{1}, \ldots, x_{r} + f_{r}, \ldots, x_{n}) - f_{1}(\varphi_{r+1}(x_{1} + f_{1}, \ldots, x_{r} + f_{r}, \ldots, x_{n}), \\
& \ldots, \\
& \varphi_{r}(x_{1} + f_{1}, \ldots, x_{r} + f_{r}, \ldots, x_{n}) - f_{r}(\varphi_{r+1}(x_{1} + f_{1}, \ldots, x_{r} + f_{r}, \ldots, x_{n}), \\
& \ldots, \\
& \varphi_{n}(x_{1} + f_{1}, \ldots, x_{r} + f_{r}, \ldots, x_{n}))
\end{align*}
\noindent where bare $f_{i}$ means $f_{i}(x_{r+1}, \ldots, x_{n})$.\end{proof}

We are interested in analytic subvarieties and not just formal ones. For this, we need to ensure that the equations defining $V$, which we call $f_{i}$, have positive radius of convergence.

\begin{lem}\label{anal1}Suppose $F$ is a complete valued field. In the situation of Lemma~\ref{formsep}, suppose that there is a positive real bound $c$ such that we have $|\lambda_{i} - \lambda_{r+1}^{\alpha_{r+1}}\ldots\lambda_{n}^{\alpha_{n}}| > c^{1/(\alpha_{r+1} + \ldots + \alpha_{n})}$ whenever $\alpha_{j} \geq 0$ with at least one $\alpha_{j} > 0$, and $i = 1, \ldots, r$. Then the power series $f_{i}$ defining $V$ all converge in the polydisk $|x_{j}| < c^{s}$ where $s$ is the maximum size of a Jordan block in $x_{r+1}, \ldots, x_{n}$.\end{lem}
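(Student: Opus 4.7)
The plan is to extract from the proof of Lemma~\ref{formsep} an explicit recursion for the $c_{i,\alpha}$, and to prove by induction on $e = |\alpha|$ that $|c_{i,\alpha}| \leq C (c^{-s})^{|\alpha|}$ for some constant $C$ depending on $\varphi$ and $c$; convergence on the polydisk $|x_j| < c^s$ follows immediately. The first step is to rewrite the recursion in the form
$$
(\lambda_i - \lambda^\alpha)\, c_{i,\alpha} \;=\; R_{i,\alpha} \;-\; \sum_{\alpha' \triangleleft \alpha} b_{\alpha,\alpha'}\, c_{i,\alpha'},
$$
where $\lambda^\alpha = \lambda_{r+1}^{\alpha_{r+1}} \cdots \lambda_n^{\alpha_n}$, the sum runs over $\alpha'$ of total degree $e$ obtained from $\alpha$ by moving a single unit from $\mathbf{e}_{r+j+1}$ back to $\mathbf{e}_{r+j}$ inside one Jordan block, the scalars $b_{\alpha,\alpha'}$ are products of $\lambda_j$'s with integer multinomial prefactors, and $R_{i,\alpha}$ is a universal polynomial in the coefficients of the $\varphi_j$ and in the $c_{j,\beta}$ with $|\beta|<e$. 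Because every non-linear coefficient of $\varphi$ has absolute value at most $1$, the only potentially large inputs into $R_{i,\alpha}$ are the lower-degree coefficients $c_{j,\beta}$.

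Next I resolve all degree-$e$ coefficients in the order dictated by the Jordan structure. Inside a Jordan block of size $s_\nu \leq s$, one resolves $c_{i,\alpha}$ in the order $e\mathbf{e}_{r+1},\, (e-1)\mathbf{e}_{r+1}+\mathbf{e}_{r+2},\ldots,\, e\mathbf{e}_{r+s_\nu}$; each $c_{i,\alpha}$ is obtained from previously resolved coefficients by dividing through a small divisor of norm $> c^{1/e}$. The longest such chain, from the head $e\mathbf{e}_{r+1}$ to the deepest multi-index $e\mathbf{e}_{r+s_\nu}$, has length at most $(s-1)e$, so iterating the recursion introduces a cumulative multiplicative factor of at most $c^{-(s-1)-1/e}$, which one may absorb into $c^{-s}$ (WLOG $c \leq 1$). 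For multi-indices distributed across several Jordan blocks the same estimate holds, since the total chain length across all blocks is still bounded by $(s-1)e$.

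Setting $M_e = \max_{i,\,|\alpha|=e}|c_{i,\alpha}|$, the two previous steps yield a Siegel-type inequality
$$
M_e \;\leq\; c^{-s}\, \Bigl(A \;+\; \sum N_{e_1,\ldots,e_d}\, M_{e_1} \cdots M_{e_d}\Bigr),
$$
the inner sum running over decompositions with each $e_j < e$ and the $N_{e_1,\ldots,e_d}$ coming from expanding $f_i\circ(\varphi_{r+1},\ldots,\varphi_n)$. Packaging this as the formal majorant $h(y) = \sum M_e y^e$, the recursion takes the form $h(y) \leq c^{-s} \Phi(y, h(y))$ for some power series $\Phi$ analytic near the origin with $\Phi(0,0)=0$, and the implicit function theorem for series with non-negative coefficients yields a convergent majorant for $h$; a direct estimate of the radius of analyticity confirms $M_e \leq C\, c^{-se}$, hence convergence of each $f_i$ on $|x_j| < c^s$.

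The main obstacle lies in the majorant step: the combinatorial explosion in the number of summands of $R_{i,\alpha}$ coming from the substitution $f_i \circ (\varphi_{r+1},\ldots,\varphi_n)$ could, in a naive pointwise bound, easily defeat the gain from the small-divisor hypothesis. The majorant technique is what trades this pointwise accounting for a functional inequality whose radius of convergence is controlled by the analyticity of $\varphi$ itself; the Jordan-chain bookkeeping of step two is what ensures that only the single factor $c^{-s}$ (rather than something growing with $e$) appears in front of $\Phi(y, h(y))$.
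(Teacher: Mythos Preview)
Your argument is correct and shares the decisive structural observation with the paper: the Jordan chain at degree $e$ has length at most $(s-1)e$, and each link costs one division by a quantity of norm $> c^{1/e}$, so the accumulated small-divisor penalty along the whole chain is $c^{-(s-1)}$ (plus one more division at the head), a constant independent of $e$. This is exactly the paper's accounting.

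Where you diverge is in the treatment of the combinatorial blowup. The paper does not set up a majorant series at all; instead it splits into two cases. In the nonarchimedean case the ultrametric inequality makes the bound $|c_{i,\alpha}| \leq c^{-s|\alpha|}$ immediate by induction on $|\alpha|$, with no counting needed. Over $\mathbb{C}$ the paper simply observes that the three sources feeding the recursion (the Jordan chain within degree $e$, the degree-$e$ terms of $\varphi_i$, and the substitution of nonlinear pieces of $\varphi_j$ into lower-degree terms of $f_i$) each contribute only polynomially many summands in $e$, and polynomial factors do not affect the radius of convergence. Your Siegel-type functional inequality $h(y)\le c^{-s}\Phi(y,h(y))$ handles both cases uniformly, which is tidier, but at the cost of an implicit-function step whose output radius you then have to identify as $c^s$.

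On that last point, one small caution: your sentence ``only the single factor $c^{-s}$ \ldots appears in front of $\Phi(y,h(y))$'' is literally correct only in the ultrametric setting. Over $\mathbb{C}$, both the integer multinomial prefactors in your $b_{\alpha,\alpha'}$ and the number of chain paths contribute polynomial-in-$e$ factors that sit outside $\Phi$. Your line ``a direct estimate of the radius of analyticity confirms $M_e \leq C\,c^{-se}$'' is exactly where these must be absorbed; it is routine (sub-exponential prefactors leave the radius unchanged), but you should say so explicitly rather than leave it implicit in the appeal to the implicit function theorem.
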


\begin{proof}When we constructed $f_{i}$ in the proof of Lemma~\ref{formsep}, we used addition, subtraction, and multiplication in many places, but division only in the step where we show that, in the linear equations defining the coefficients of $f_{i}$, these coefficients (treated as variables) themselves have coefficients equal to $\lambda_{r+1}^{\alpha_{r+1}}\ldots\lambda_{n}^{\alpha_{n}} - \lambda_{i}$. The constant coefficient in these defining equations comes from three sources: other degree-$e$ terms in the same Jordan block; degree-$e$ terms in $\varphi_{i}$; and, in the expression $f_{i}(\varphi_{r+1}, \ldots, \varphi_{n})$, terms coming from plugging in nonlinear coefficients of $\varphi_{j}$, $j > r$, into degree-less-than-$e$ terms of $f_{i}$.

Observe that in a Jordan block of size $s$, and with degree-$e$ coefficients, we need $e(s-1)$ increments by $1$ to move from the first element to the last element of the block. Each increment, we divide by another number, which may be as low as $\sqrt[e]{c}$, so overall we divide by $c^{s-1}$. We also divide by $c$ in the move from degree $e$ to degree $e+1$. Finally, the coefficients of $\varphi_{i}$, except possibly $\lambda_{i}x_{i}$, have absolute value bounded by $1$ by our choice of coordinates. This means all sources of the constant have absolute value bounded as the lemma requires. If $F$ is nonarchimedean, this immediately gives us a bound of $1/c^{es}$ on the absolute value of every degree-$e$ coefficient of $f_{i}$, which gives us the required polyradius of convergence.

If in contrast $F$ is complex, then we need to check that the number of summands comprising the constant of the defining equations does not grow too quickly. In fact, if we can bound the number of summands by any function that grows less than exponentially in $e$, then by standard convergence tests for power series we will get the required polyradius of convergence. Now, $\varphi_{i}$ yields a single term per polydegree; the Jordan block gives us one choice of variable to go back by per degree, so we overall get $e$ summands; and, for any degree $e' < e$, the number of terms of $f_{i}$ of degree $e'$ as well as the number of nonlinear coefficients of the $\varphi_{j}$s that we can plug in grow polynomially, since there are $(e+n-r)!/e!(n-r)!$ coefficients of degree up to $e$, and this grows as $O(e^{n-r})$.\end{proof}

\begin{rem}\label{smooth}If the power series $f_{i}$ all converge, then the subvariety $V$ is nonsingular, since $$\frac{\partial (x_{i} - f_{i})}{\partial x_{j}} = \delta_{ij}.$$\end{rem}

The conditions of Lemma~\ref{anal1} are satisfied in many natural cases, particularly in the nonarchimedean case. Suppose $F$ is nonarchimedean. If we choose $x_{1}, \ldots, x_{r}$ such that $\lambda_{1}, \ldots, \lambda_{r}$ are all the repelling multipliers, then the bound $c = \min_{i = 1}^{r}\{|\lambda_{i}|\} > 1$ works. If $x$ has no repelling directions, and we choose $x_{1}, \ldots, x_{r}$ to be either all of the indiffierent directions or all of the attracting directions, then the bound $c = 1$ works. Observe that once $c \geq 1$, there is no need at all to take the $e$th root. Observe also that if there are no nontrivial Jordan blocks, there is again no need to take the $e$th root.

If $F = \mathbb{C}$, $x$ has no repelling directions, and we choose $x_{1}, \ldots, x_{r}$ to be all of the indifferent directions, then the conditions of the lemma are satisfied, even after taking $e$th roots. In the worst case, when the multipliers lie on the same ray from the origin, we need to find $c$, depending the positive real $\lambda$, such that $|1 - \lambda^{e}| > \sqrt[e]{c}$. This can be rewritten as $(1 - \lambda^{e})^{e} > c$. Now $(1 - \lambda^{e})^{e} > 1 - e\lambda^{e}$, and the quantity $e\lambda^{e}$ is bounded when $\lambda < 1$, so we can indeed find a bound $c$.

If $F = \mathbb{C}$ and we have a mix of rationally and irrationally indifferent multipliers, the situation is more delicate, mirroring the question of linearization in one variable near irrationally indifferent points.

\begin{rem}It is well-known to complex dynamicists that if $F = \mathbb{C}$, $\lambda_{1}, \ldots, \lambda_{r}$ are attracting, and $\lambda_{r+1}, \ldots, \lambda_{n}$ are repelling, then it is possible to construct an two analytic $V$s as in Lemmas~\ref{formsep} and~\ref{anal1}, one tangent to $\lambda_{1} = \ldots = \lambda_{r} = 0$ and one tangent to $\lambda_{r+1} = \ldots = \lambda_{n} = 0$. See~\cite{Yoc2} for a survey of this result. However, there does not seem to be any result in the literature separating indifferent directions from attracting or repelling ones, nor is there any result over nonarchimedean fields.\end{rem}

Finally, in analogy with the terminology of attracting, repelling, and indifferent fixed points, we can study the behavior of points near the fixed subvariety $V$ constructed in Lemma~\ref{formsep}. Here, for simplicity, we use the fact that $V$ is locally conjugate to a linear subvariety. However, since every variety is locally linearizable at every smooth point, this is really a result about general fixed subvarieties at smooth fixed points.

\begin{prop}\label{V}Let $\varphi = (\varphi_{1}, \ldots, \varphi_{n})$ be analytic on the open unit polydisk. Suppose that $x = (0, \ldots, 0)$ is fixed under $\varphi$, that the matrix of linear terms of $\varphi$ is already in Jordan canonical form with multipliers $\lambda_{1}, \ldots, \lambda_{n}$, and that $V$, defined by $x_{1} = \ldots = x_{r} = 0$, is also fixed under $\varphi$. Then $\varphi|_{V}$, the restriction of $\varphi$ to $V$, also fixes $x$, with multipliers $\lambda_{r+1}, \ldots, \lambda_{n}$.\end{prop}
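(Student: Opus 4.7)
The plan is to prove the proposition by direct computation, exploiting the fact that $V$ is defined by a linear system of equations in the chosen coordinates, so that $\varphi|_V$ is obtained simply by setting $x_1 = \ldots = x_r = 0$ in the power series $\varphi_{r+1}, \ldots, \varphi_n$ and ignoring the (identically zero) components $\varphi_1, \ldots, \varphi_r$.

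The first step is to verify that the restriction is well-defined and that the formula just described is correct. Since $V$ is fixed under $\varphi$, for any point $(0,\ldots,0,x_{r+1},\ldots,x_n) \in V$ we must have $\varphi(0,\ldots,0,x_{r+1},\ldots,x_n) \in V$, i.e.\ $\varphi_i(0,\ldots,0,x_{r+1},\ldots,x_n) \equiv 0$ as a power series for every $i = 1, \ldots, r$. Consequently $\varphi|_V$ is determined by the $n-r$ power series $\varphi_i(0,\ldots,0,x_{r+1},\ldots,x_n)$ for $i = r+1, \ldots, n$, each analytic on the open unit polydisk in the $n-r$ variables $x_{r+1}, \ldots, x_n$, and these still vanish at the origin.

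Next I would read off the linear term. For $i > r$, the linear part of $\varphi_i$ is of the form $\lambda_i x_i + \epsilon_i x_{i+1}$, where $\epsilon_i \in \{0,1\}$ is the Jordan superdiagonal entry (with $\epsilon_n = 0$ and $\epsilon_i = 0$ whenever $x_i$ is at the end of its Jordan block). None of these involve $x_1, \ldots, x_r$, so setting those variables to zero does not alter them. Therefore the matrix of linear terms of $\varphi|_V$ is exactly the lower-right $(n-r) \times (n-r)$ block of the original Jordan matrix; this block is itself upper triangular with diagonal entries $\lambda_{r+1}, \ldots, \lambda_n$, so those are precisely the multipliers of $x$ under $\varphi|_V$.

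The only subtle point is checking that no Jordan block straddles the boundary between the first $r$ coordinates and the last $n-r$, i.e.\ that $\epsilon_r = 0$; otherwise the lower-right block would not be a genuine submatrix of the linear part. But this is forced by invariance of $V$ itself: the linear approximation of $\varphi$ must preserve $T_x V = \{x_1 = \ldots = x_r = 0\}$, and applying it to $(0,\ldots,0,x_{r+1},0,\ldots,0)$ produces $\epsilon_r x_{r+1}$ in the $x_r$-coordinate, which must vanish identically in $x_{r+1}$. Hence $\epsilon_r = 0$, the block decomposition is clean, and the argument concludes. I do not anticipate a genuine obstacle; the proposition is essentially a bookkeeping statement about Jordan blocks meeting a coordinate subspace that happens to be fixed.
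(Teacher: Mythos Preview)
Your proof is correct and follows exactly the paper's approach: the paper's proof is the single line
\[
\varphi|_{V} = (\varphi_{r+1}(0, \ldots, 0, x_{r+1}, \ldots, x_{n}), \ldots, \varphi_{n}(0, \ldots, 0, x_{r+1}, \ldots, x_{n})),
\]
and you have simply unpacked this with more care. One small remark: your final paragraph about $\epsilon_r = 0$ is true but not needed for the proposition as stated, since the linear part of $\varphi_i$ for $i>r$ already involves only $x_i$ and $x_{i+1}$, so the lower-right block is automatically upper triangular with diagonal $\lambda_{r+1},\ldots,\lambda_n$ regardless of whether a Jordan block crosses the boundary; the observation would only matter if you also wanted to conclude that $\varphi|_V$ inherits the Jordan form.
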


\begin{proof}This is an immediate consequence of the fact that $$\varphi|_{V} = (\varphi_{r+1}(0, \ldots, 0, x_{r+1}, \ldots, x_{n}), \ldots, \varphi_{n}(0, \ldots, 0, x_{r+1}, \ldots, x_{n}))$$\end{proof}

The multipliers $\lambda_{r+1}, \ldots, \lambda_{n}$ govern the behavior of points on $V$, near $x$. The converse to this should be that the multipliers $\lambda_{1}, \ldots, \lambda_{r}$ govern the behavior of points near $V$: if the multipliers $\lambda_{1}, \ldots, \lambda_{r}$ are attracting then $V$ should be attracting. Our next proposition states that this is true, if an additional condition is met.

\begin{prop}\label{rate}Let $F$ be a complete nonarchimedean valued field, and let $\varphi = (\varphi_{1}, \ldots, \varphi_{n})$ be defined over $F$ and analytic on the open unit polydisk. Suppose that $x = (0, \ldots, 0)$ is fixed under $\varphi$, that the matrix of linear terms of $\varphi$ is already in Jordan canonical form with multipliers $\lambda_{1}, \ldots, \lambda_{n}$, and that $V$, defined by $x_{1} = \ldots = x_{r} = 0$, is also fixed under $\varphi$. Suppose also that $|\lambda_{r+1}|, \ldots, |\lambda_{n}| \leq 1$. If $|\lambda_{1}|, \ldots, |\lambda_{r}| < 1$ then $V$ is attracting, in the sense that there exists an open neighborhood $D \ni x$ in which for every $z \in D$, the minimal distance from $\varphi^{k}(z)$ to $V$ tends to $0$ as $k \to \infty$. Moreover, for large enough $k$, the minimal distance from $\varphi^{k}(z)$ to $V$ is proportional to $\max_{i = 1}^{r}\{|\lambda_{i}|^{k}\}$ provided at least one of $\lambda_{1}, \ldots, \lambda_{r}$ is nonzero.\end{prop}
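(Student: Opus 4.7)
The plan is to exploit the $\varphi$-invariance of $V$ to rewrite the top $r$ coordinates of $\varphi$ as a linear combination (with analytic coefficients) of $x_1, \ldots, x_r$, and then estimate iterates in an adapted Jordan-weighted sup-norm that makes the relevant part of the linear action contracting by a factor close to $\mu = \max_{i \leq r} |\lambda_i|$.

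First, since $V = \{x_1 = \cdots = x_r = 0\}$ is fixed, $\varphi_i(0,\ldots,0,x_{r+1},\ldots,x_n) = 0$ for $i \leq r$, so every monomial of $\varphi_i$ contains at least one $x_j$ with $j \leq r$, and every Jordan block meeting $\{1,\ldots,r\}$ lies entirely within it. Therefore we may factor
\[
  \varphi_i(z) = \sum_{j=1}^{r} B_{ij}(z)\,x_j,\qquad i = 1,\ldots,r,
\]
where $B(z)$ is a matrix of power series with $B(0) = \Lambda_r$, the $r\times r$ Jordan piece corresponding to $\lambda_1,\ldots,\lambda_r$. Write $B(z) = \Lambda_r + E(z)$; by the normalization of Section~\ref{separating} the coefficients of the nonlinear terms are bounded by $1$, so $|E_{ij}(z)| \leq \rho$ whenever $|z|_\infty \leq \rho < 1$. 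Let $s$ be the largest Jordan block size in $\{1,\ldots,r\}$.

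Next, pick a parameter $\tau \in (\mu,1)$ and define a Jordan-adapted norm on the $u$-space $u = (x_1,\ldots,x_r)$ by $\|u\|_* = \max_i \tau^{-d_i}|u_i|$, where $d_i \in \{0,1,\ldots,s-1\}$ measures the distance of index $i$ from the top of its Jordan block (so superdiagonal $1$'s carry $d_i$ to $d_i + 1$). A short nonarchimedean computation gives $\|\Lambda_r\|_{\mathrm{op},*} \leq \tau$: the diagonal contributes $|\lambda_i| \leq \mu \leq \tau$, and the Jordan $1$'s are absorbed by the weight ratio $\tau^{d_{i+1}-d_i} = \tau$. For $|z|_\infty \leq \rho$ one gets $\|E(z)\|_{\mathrm{op},*} \leq \tau^{-(s-1)}\rho$, so choosing $\rho < \tau^s$ yields $\|B(z)\|_{\mathrm{op},*} \leq \tau$. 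A parallel estimate using $|\lambda_j| \leq 1$ for $j > r$ shows that the polydisk $D = \{|z|_\infty \leq \rho\}$ is forward invariant.

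From $u_{k+1} = B(z_k)\,u_k$ we then obtain $\|u_k\|_* \leq \tau^k \|u_0\|_*$, and the norm equivalence $|u|_\infty \leq \|u\|_* \leq \tau^{-(s-1)}|u|_\infty$ yields $|u_k|_\infty \leq \tau^{k-s+1}|u_0|_\infty \to 0$. Since $\mathrm{dist}(\varphi^k(z), V) = |u_k|_\infty$, this proves $V$ is attracting. For the moreover, apply the argument with $\tau = \mu$ (permitted now that $\mu > 0$): the operator bound sharpens to $\|\Lambda_r\|_{\mathrm{op},*} = \mu$, and $\|E(z_k)\|_{\mathrm{op},*} \leq \mu^{-(s-1)}\rho < \mu$ once $\rho < \mu^s$. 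Hence $\Lambda_r u_k$ strictly dominates $E(z_k)u_k$ in the $*$-norm, and the ultrametric strict inequality yields $\|u_{k+1}\|_* = \|\Lambda_r u_k\|_*$. A direct check (selecting the smallest index achieving the maximum in $\|u\|_*$ inside a block whose multiplier has absolute value exactly $\mu$) shows $\|\Lambda_r u\|_* = \mu\|u\|_*$ whenever the maximum is realized in such a block, and the ultrametric strict inequality ensures that this extremal property persists under iteration. Thus $\|u_k\|_* = \mu^k\|u_0\|_*$ for all $k$, giving the claimed proportionality $|u_k|_\infty \asymp \mu^k$.

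The main obstacle is the final bookkeeping step: verifying that the extremal configuration (the maximum of $\|u_k\|_*$ being attained in a Jordan block with multiplier of absolute value exactly $\mu$) is genuinely preserved under iteration, especially when several such blocks coexist or when the top-of-block index at which the max is attained migrates downward through the block under the Jordan $1$'s. Every other step in the argument is essentially formal once the weighted norm is introduced.
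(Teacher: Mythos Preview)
Your approach is correct and genuinely different from the paper's. The paper argues coordinate by coordinate: it tracks $|\varphi_{i,k}(z)|$ through each Jordan block by direct iteration, repeatedly shrinking the radius $c$ so that the linear term dominates the higher-order ones, and only after $s$ iterates (the block size) obtains the geometric contraction. You instead factor the first $r$ coordinates as $u\mapsto B(z)u$ with $B(0)=\Lambda_r$, introduce a Jordan-weighted sup norm, and reduce everything to a single operator-norm bound $\|B(z)\|_{\mathrm{op},*}\le\tau$. This is the standard adapted-norm device from hyperbolic dynamics; it buys you a one-line contraction estimate with transparent constants, whereas the paper's hands-on computation is more elementary but requires block-by-block bookkeeping spread over several iterates.

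The obstacle you flag is not real, provided you pick the \emph{largest} index $i_0$ (rather than the smallest) in a $\mu$-block achieving $\|u_k\|_*$. If $i_0$ ends its block then $(\Lambda_r u_k)_{i_0}=\lambda_{i_0}(u_k)_{i_0}$ has weighted value exactly $\mu\|u_k\|_*$; otherwise maximality of $i_0$ forces $\mu^{-d_{i_0+1}}|(u_k)_{i_0+1}|<\|u_k\|_*$, hence $|(u_k)_{i_0+1}|<|\lambda_{i_0}(u_k)_{i_0}|$, and the strict ultrametric inequality gives $|(\Lambda_r u_k)_{i_0}|=\mu|(u_k)_{i_0}|$. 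Either way the maximum of $\|\Lambda_r u_k\|_*$ equals $\mu\|u_k\|_*$ and is attained at $i_0$; combining with $\|E(z_k)u_k\|_*<\mu\|u_k\|_*$ and the strict ultrametric inequality at coordinate $i_0$ yields $\mu^{-d_{i_0}}|(u_{k+1})_{i_0}|=\mu\|u_k\|_*$, so the extremal configuration persists to $u_{k+1}$. Coexisting $\mu$-blocks and index migration cause no trouble. The only genuine caveat---shared by the paper's own argument---is that the induction needs $\|u_0\|_*$ to be realized in a $\mu$-block to begin with; for $u_0$ supported entirely in a block with $|\lambda|<\mu$ the decay can genuinely beat $\mu^k$. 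The paper sidesteps this by tacitly taking all $|z_i|=c$, and you may do the same.
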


\begin{proof}First, we use the fact that the multipliers are all non-repelling and apply linear coordinate-change to ensure that all the coefficients of $\varphi$ have absolute value at most $1$. Write $z = (z_{1}, \ldots, z_{n})$, with $|z_{i}| \leq c < 1$, and write $\varphi(z) = (\varphi_{1}(z), \ldots, \varphi_{n}(z))$. For any $i = 1, \ldots, n$, $\varphi_{i}(z)$ is the sum of terms of absolute value bounded by $c$. This means $|\varphi_{i}(z)| \leq c$. Now, let us assume $i = 1, \ldots, r$, and work Jordan block by Jordan block. Suppose $x_{1}, \ldots, x_{s}$ form a Jordan block. By assumption, $\lambda_{1} = \ldots = \lambda_{s}$ and this common multiplier has absolute value less than $1$. Now, $\varphi_{s} = \lambda_{1}z_{s} + \mbox{(higher-order terms)}$, so $|\varphi_{s}| \leq \max\{|\lambda_{1}|c, c^{2}\} < c$. For simplicity, let us choose $c$ to be small enough that $|\lambda_{1}|c > c^{2}$. In contrast, if $i = 1, \ldots, s-1$, then $\varphi_{i} = \lambda_{1}z_{i} + z_{i+1} + \mbox{(higher-order terms)}$ and then if $|z_{1}| = \ldots = |z_{n}| = c$ then $|\varphi_{i}| = c$ as well.

Now, write $\varphi^{k}(z) = (\varphi_{1, k}(z), \ldots, \varphi_{n, k}(z))$. We have $|\varphi_{i}| \leq c$ for all $i$ but also $|\varphi_{s}| \leq |\lambda_{1}|c$. Now $|\varphi_{s, 2}| \leq \max\{|\lambda_{1}|^{2}c, c^{2}\}$ and $|\varphi_{s-1, 2}| \leq |\lambda_{1}|c$. We now need to choose $c$ to be even smaller; this is because $\varphi_{s}$ may contain such terms as $x_{1}^{2}$ and $x_{1}x_{r+1}$, which still have absolute value bounded only by $c^{2}$ and nothing smaller. We may repeat this process $s$ times, obtaining $|\varphi_{i, s}| \leq \max\{|\lambda_{1}|^{i}c, c^{2}\}$ for $i = 1, \ldots, s$. This requires choosing $c$ such that $|\lambda_{1}|^{s}c > c^{2}$. Roughly at this point, the process ends: once we choose $c$ to be smaller than any $|\lambda_{i}|^{s_{i}}$, where $s_{i}$ is the size of the Jordan block containing $x_{i}$, any higher-order term in $\varphi_{s}$ will have absolute value bounded by a smaller constant than $c^{2}$, namely $\max_{i = 1}^{r}\{|\lambda_{i}|\}c^{2}$, obtained at such terms as $x_{1}x_{r+1}$, i.e. quadratic terms with one term at the head of its Jordan block and the other in the range $x_{r+1}, \ldots, x_{n}$ (since $V$ is fixed, terms such as $x_{r+1}^{2}$ cannot occur in $\varphi_{1}, \ldots, \varphi_{r}$).

If $k > s$ we then obtain $|\varphi_{i, k}| \leq |\lambda_{1}|^{k-s+i}c$, again for $i = 1, \ldots, s$. This tends to $0$, at a rate of at worst a factor of $|\lambda_{1}|$ per iteration. Similarly, for any $i = 1, \ldots, r$, we have $|\varphi_{i, t}| \leq |\lambda_{i}|^{t}c_{i}$ where $c_{i}$ depends on $c$ and on the position of $x_{i}$ in its Jordan block. This again tends to $0$, at (at worst) the required rate. Finally, let us show that this is the exact rate at which $\varphi^{k}(z)$ approaches $V$. Assume without loss of generality that the $\max_{i = 1}^{r}\{|\lambda_{i}|\} = \lambda_{1}$. We will show that $|\varphi_{s, k}| = |\lambda_{1}|^{k}c$, provided $c < |\lambda_{1}|^{\max\{s_{i}\}-1}$. As before, $\varphi_{s} = \lambda_{1}z_{s} + \mbox{(higher-order terms)}$. After $k \geq \max\{s_{i}\}$ iterations, the higher-order terms are bounded by $|\lambda_{1}|^{k-\max\{s_{i}\}+1}c^{2}$. But now $|\lambda_{1}|^{k-\max\{s_{i}\}+1}c^{2} < |\lambda_{1}|^{k}c$. Thus, on the orbit of $z$, the maximal absolute value among the monomials of $\varphi_{s}$ always occurs at the linear term, so $|\varphi_{s, k}| = |\lambda_{1}|^{k}c$, as required.\end{proof}

\begin{rem}Proposition~\ref{rate} is also true in the complex case, but the condition that $|\lambda_{r+1}|, \ldots, |\lambda_{n}| \leq 1$ needs to be tightened: we require $x$ to be in the Fatou set of $\varphi|_{V}$, or else we require $x$ to be rationally indifferent and $z$ to be such that it approaches the Fatou set of $\varphi|_{V}$. Rather than state a full result, we will deal with this case in Section~\ref{zhang}, in which case we will have $n = 2$ and $r = 1$, reducing the question to much simpler one-dimensional complex dynamics.\end{rem}

\begin{rem}If $\lambda_{1} = \ldots = \lambda_{r} = 0$, then according to Proposition~\ref{rate}, if $z$ is near $x$ then it approaches $V$. Indeed, if $|z_{i}| \leq c$, then $|\varphi_{i, k}(z)| \leq c^{k}$ as long as $i = 1, \ldots, r$. Observe that we do not obtain any superattraction here: in $1$ dimension, if $x$ is a superattracting fixed point, mapping to itself with degree $e > 1$, then any $z$ near $x$ satisfies $|\varphi^{k}(z) - x| \leq C|z - x|^{e}$ where $C$ is a constant. But here, $\varphi^{k}(z)$ approaches $V$ geometrically, by a factor of $\max\{z_{r+1}, \ldots, z_{n}\}$, provided $\lambda_{r+1}, \ldots, \lambda_{n}$ are all indifferent. It is still true that we can get arbitrarily fast attraction by choosing $z$ to have small coefficients $z_{r+1}, \ldots, z_{n}$. We say in this case that $x$ is superattracting, but $V$ is not. This is one way to see that, even if $\lambda_{1} = \ldots = \lambda_{r} = 0$, $V$ is not necessarily a critical subvariety, mapping to itself with degree more than $1$. For example, consider $\varphi(x_{1}, x_{2}) = (x_{1}^{2} + x_{1}x_{2}, x_{2} + x_{2}^{2})$: the subvariety $x_{1} = 0$ is fixed, and at the fixed point $x = (0, 0)$, $\lambda_{1} = 0$, but the subvariety $x_{1} = 0$ is not critical.\end{rem}

\section{Isolated Periodic Points}\label{isolated}

In this section, we prove Theorem~\ref{isolatedper}. Throughout this section, $F$ is a complete, algebraically closed, nonarchimedean valued field; our main model is $\mathbb{C}_{p}$. We denote by $p$ the residue characteristic of $F$. Initially we will not make any assumption on the characteristic of $F$, but eventually we will require $F$ to have characteristic $0$. We denote the maximal ideal of $F$ by $\mathfrak{m}$, and pass back and forth between the absolute value on $F$, $|\cdot|$, and the valuation on $F$, $v(\cdot) = -\log_{p}|\cdot|$.

We apply the results of Section~\ref{separating} by reducing the problem to proving that periodic points are isolated in the indifferent case. We will prove the following theorem:

\begin{thm}\label{isolatedindiff}Let $\varphi$ be an analytic map, defined over $F$ of characteristic $0$, from the open unit polydisk in $n$ dimensions to itself. If $x = (0, \ldots, 0)$ is a fixed point all of whose multipliers are indifferent, then either $x$ is isolated as a periodic point of $\varphi$, or, for some iterate $\varphi^{k}$, there exists a pointwise fixed subvariety of positive dimension passing through $x$.\end{thm}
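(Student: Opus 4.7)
The plan combines iteration, the separation machinery of Section~\ref{separating}, and the higher-dimensional Newton polytope of Rabinoff~\cite{Rab}, with the aim of reducing the problem to the case where every multiplier equals $1$ (the ``purely rationally indifferent'' case) and then attacking that case with Newton-polytope methods.

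First, replace $\varphi$ by the iterate $\varphi^{N}$, where $N$ is the least common multiple of the orders of those multipliers that are roots of unity. Both alternatives in the conclusion are invariant under replacement by iterates, so this is harmless; after it, every $\lambda_{i}$ is either $1$ or not a root of unity at all. Next apply Lemma~\ref{formsep}, placing the multipliers $\lambda_{i} \neq 1$ into the first group: the multiplicative semigroup generated by the remaining $\lambda_{j} = 1$'s is just $\{1\}$, so no $\lambda_{i} \neq 1$ lies in it, and the hypothesis holds. This yields a formal invariant subvariety $V$ of codimension $r$ on which, by Proposition~\ref{V}, $\varphi|_{V}$ has all multipliers equal to $1$. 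In the favorable subcase $|\lambda_{i} - 1| = 1$ for every $i \leq r$ (equivalently, the residues of the irrationally indifferent multipliers are not $1$), Lemma~\ref{anal1} promotes $V$ to an analytic subvariety; otherwise, we work with $V$ formally and use the formal data only to organize the argument.

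With $V$ in hand, induct on $n$. The restriction $\varphi|_{V}$ lives in $n - r$ variables with all multipliers equal to $1$, so the inductive hypothesis applies: either $x$ is isolated as a periodic point on $V$, or some iterate of $\varphi|_{V}$ fixes a positive-dimensional subvariety of $V$ pointwise, which is also a pointwise fixed subvariety of an iterate of $\varphi$ and gives the second conclusion. For periodic points $z$ off $V$, the invariance of $V$ forces every monomial of $(\varphi^{k})_{i}$ for $i \leq r$ to be divisible by one of $x_{1}, \ldots, x_{r}$; writing $z^{\perp} = (z_{1}, \ldots, z_{r})$, the equation $\varphi^{k}(z) = z$ projected to the first $r$ coordinates reads $(A_{1}^{k} - I + O(z))\,z^{\perp} = 0$, where $A_{1}$ is the Jordan block of $\lambda_{1}, \ldots, \lambda_{r}$. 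Since no such $\lambda_{i}$ is a root of unity, $A_{1}^{k} - I$ is invertible for every $k$, and a nonarchimedean Rouch\'{e}-type estimate in the transverse polydisk forbids $z^{\perp} \neq 0$ in a neighborhood of $x$. This reduces the problem to the base case $r = 0$.

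In the base case, every $\lambda_{i} = 1$ and we set $g^{(k)}(x) = \varphi^{k}(x) - x$: each is an $n$-tuple of power series with coefficients of absolute value at most $1$, vanishing at $x$. If some $g^{(k)}$ vanishes identically, then $\varphi^{k} = \mathrm{id}$ and every subvariety through $x$ is pointwise fixed. Otherwise, because $\chr F = 0$, the initial homogeneous form of $g^{(k)}$ is $k$ times the initial form $H$ of $g^{(1)}$, so up to a nonzero scalar this initial form is $k$-independent. Apply Rabinoff's higher-dimensional Newton polytope machinery to the system $g^{(k)} = 0$: the common zero locus near $x$ is controlled by the vertices of the Newton polytope and their associated initial forms, and the $k$-independence (up to scalar) of the initial form produces a polydisk $U$, uniform in $k$, in which either $x$ is the only common zero---giving isolation as a periodic point---or the initial form $H$ vanishes on a positive-dimensional cone, which persists in $U$ as a pointwise fixed subvariety of some iterate. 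The chief technical obstacle is this last Newton-polytope step: in the nonarchimedean setting the scalar $k$ can have arbitrarily large valuation, so the higher-order terms of $g^{(k)}$ are not automatically dominated by $kH$. Overcoming this requires Rabinoff's Weierstrass-type preparation to factor out the $k$-dependence uniformly, and it is precisely at this point that the characteristic zero hypothesis is used in an essential way.
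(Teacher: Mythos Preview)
Your outline has the right ingredients but contains two genuine gaps, and the paper's proof avoids both by taking a different route.

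\textbf{First gap: the separation is generally only formal.} You invoke Lemma~\ref{formsep} to split off the irrationally indifferent directions from the $\lambda_{j}=1$ directions. But Lemma~\ref{anal1} requires a lower bound $|\lambda_{i} - \lambda_{r+1}^{\alpha_{r+1}}\cdots\lambda_{n}^{\alpha_{n}}| > c^{1/|\alpha|}$; here every $\lambda_{j}=1$ for $j>r$, so this reads $|\lambda_{i}-1| > c^{1/|\alpha|}$ for all $|\alpha|\ge 2$. After passing to an iterate you have $\lambda_{i}\equiv 1 \pmod{\mathfrak m}$, so $|\lambda_{i}-1|<1$ and $|\lambda_{i}-1|^{|\alpha|}\to 0$: no positive $c$ exists. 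Hence $V$ is in general only a formal subvariety, the conjugation $x_{i}\mapsto x_{i}-f_{i}$ is only formal, and statements like ``every monomial of $(\varphi^{k})_{i}$ for $i\le r$ is divisible by some $x_{j}$ with $j\le r$'' hold only after a divergent coordinate change. Actual periodic points live in the analytic polydisk, so formal data alone cannot locate them relative to $V$.

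\textbf{Second gap: the transverse Rouch\'{e} estimate is not uniform in $k$.} Even granting the coordinate change, your argument ``$(A_{1}^{k}-I + O(z))z^{\perp}=0$ forces $z^{\perp}=0$ near $x$'' needs $\|(A_{1}^{k}-I)^{-1}\|$ bounded independently of $k$. It is not: $v(\lambda_{i}^{k}-1)$ grows with $k$ (this is precisely the computation in the proof of Theorem~\ref{rl}), so $|\lambda_{i}^{k}-1|$ can be arbitrarily small and the $O(z)$ term is not dominated in any fixed neighborhood. The same issue resurfaces in your base case, where you note that $v(k)\to\infty$ prevents the initial form $kH$ from dominating higher terms, but then defer to an unspecified ``Weierstrass-type preparation''.

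\textbf{How the paper proceeds instead.} The paper does \emph{not} separate irrationally indifferent from rationally indifferent directions. It works directly with the full system $\varphi_{i,k}-x_{i}$ and tracks its minimal-polydegree terms (Definition~\ref{minpoly}): the linear term $x_{i}$ when $\lambda_{i}$ is not a root of unity, the off-diagonal term $x_{i+1}$ inside a Jordan block, and certain fixed nonlinear monomials in the $\lambda_{j}=1$ variables. A direct recursion shows that each such term has coefficient of valuation $O(\log_{p} k)$. The key step you are missing is Lemma~\ref{implies}: if the minimal-polydegree terms have bounded total degree and minimal valuation $h(k)=o(k^{1/n})$, then for large $m$ the dominant monomials of every $\varphi_{i,k}-x_{i}$ at any polyvaluation $(m_{1},\ldots,m_{n})$ with $m_{j}>m$ lie in a simplex of side $e+h(k)/m$; by Corollary~\ref{volbound} and Theorem~\ref{rabinoff} the number of common zeros there is $O\big((e+h(k)/m)^{n}\big)=o(k)$, which rules out a full $k$-cycle. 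This volume-counting argument is what replaces your Rouch\'{e} step and absorbs the growth of $v(\lambda_{i}^{k}-1)$ and $v(k)$ simultaneously.
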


\begin{proof}[Proof of Theorem~\ref{isolatedper} assuming Theorem~\ref{isolatedindiff}.] Dehomogenize and choose coordinates such that $x = (0, \ldots, 0)$, all coefficients of $\varphi$ have absolute value at most $1$, the linear terms of $\varphi$ are in Jordan canonical form, with $|\lambda_{1}|, \ldots, |\lambda_{r}| < 1$ and $|\lambda_{r+1}|, \ldots, |\lambda_{n}| = 1$. The conditions of both Lemmas~\ref{formsep} and~\ref{anal1} hold, since for all $i = 1, \ldots, r$ and for all integers $\alpha_{r+1}, \ldots, \alpha_{n}$, we have $$|\lambda_{i} - \lambda_{r+1}^{\alpha_{r+1}}\ldots\lambda_{n}^{\alpha_{n}}| = 1.$$ Thus, there exists a fixed analytic subvariety $V$ defined by the equations $x_{i} = f_{i}$ where $f_{1}, \ldots, f_{r} \in F[[x_{r+1}, \ldots, x_{n}]]$, and we can do a local analytic coordinate change such that $V$ is in fact linear, of the form $x_{1} = \ldots = x_{r} = 0$. This coordinate change converges in the open unit polydisk, and the new power series still have coefficients with absolute value at most $1$, since, as noted in the proof of Lemma~\ref{anal1}, we only ever divide by quantities of the form $\lambda_{i} - \lambda_{r+1}^{\alpha_{r+1}}\ldots\lambda_{n}^{\alpha_{n}}$, which have absolute value $1$ in this case.

Now, we check that there are no periodic points for $\varphi$ near $x$, except possibly on $V$. For this, we apply Proposition~\ref{rate}, whose conditions are satisfied since none of the multipliers is repelling. In a neighborhood of $x$, which we may in fact take to be the open unit polydisk, points are attracted to $V$: any point near $x$ not on $V$, of the form $(z_{1}, \ldots, z_{n})$, with $(z_{1}, \ldots, z_{r}) \neq (0, \ldots, 0)$, will approach $V$. Clearly, such points cannot be periodic.

It remains to be shown that there is a neighborhood of $x$ whose intersection with $V$ contains no periodic points other than $x$ itself. We have a map $\varphi$ from the open unit polydisk in $r$ dimensions to itself. Moreover, $x$ is a fixed point, whose multipliers are all indifferent, by Proposition~\ref{V}. We apply Theorem~\ref{isolatedindiff}, and obtain that either there exists a positive-dimension pointwise fixed subvariety under some $\varphi^{k}$, or $x$ is isolated as a periodic point as required. However, as a local deformation of an algebraic map, $\varphi$ inherits the following property: algebraic morphisms from $\mathbb{P}^{n}$ to itself never have positive-dimension pointwise fixed subvarieties, and instead have finitely many fixed points (and finitely many periodic points of a fixed period). This is well-known to dynamicists; see for example~\cite{PST}. Since there is no pointwise fixed subvariety of positive dimension, the other conclusion of Theorem~\ref{isolatedindiff} is obtained, and $x$ is indeed isolated.\end{proof}

Before going on to prove Theorem~\ref{isolatedindiff}, we will reprove a one-dimensional version of the theorem. This is not new; it goes back at least as far as Rivera-Letelier (\cite{RL}; see also~\cite{LRL}). However, we will explicitly use a Newton polygon technique that we will generalize in order to prove the same result in several variables.

Recall that the Newton polygon of a power series $g(z) = \sum_{i=0}^{\infty}a_{i}z^{i} \in F[[z]]$ is the lower convex hull in $\mathbb{R}^{2}$ of the set $\{(i, v(a_{i}))\}$. It is a finite or countable union of line segments. We define the length of a segment to be the length of its projection down to the horizontal axis; in case $a_{0} = 0$, and the minimal index such that $a_{i} \neq 0$ is $e > 0$, we have a vertical segment, which we by convention say has length $e$ and slope $-\infty$. The theorem of the Newton polygon states that $g(z)$ has a root of valuation $m$ only if the Newton polygon of $g$ has a segment of slope $-m$, and moreover, if $g(z)$ has a segment of slope $-m$ and finite length $k$ then $g$ has exactly $k$ roots of valuation $m$.

We now have,

\begin{thm}\label{rl}(Rivera-Letelier) Let $\varphi$ be a power series over $F$ in one variable, with coefficients of absolute value at most $1$. Suppose $0$ is an indifferent fixed point of $\varphi$, with multiplier $\lambda$. If $F$ has characteristic $0$, or if $\lambda$ is not a root of unity, then either $0$ is isolated as a periodic point, or $\varphi$ has an iterate that is equal to the identity map.\end{thm}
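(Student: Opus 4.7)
The strategy is to bound, uniformly in $k$, the slopes of the Newton polygon of $\varphi^{k}(z) - z$, whose nonzero roots are exactly the nontrivial periodic points of period dividing $k$. A uniform lower bound on the slopes yields an absolute radius $r > 0$ such that no nonzero periodic point of any period lies in $|z| < r$, giving the desired isolation.

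\textbf{Case 1: $\lambda$ is not a root of unity.} By Theorem~\ref{infodump}, $\varphi$ admits a formal linearization $g(z) = z + \sum_{j \geq 2} \alpha_{j} z^{j}$ with $\varphi(g(z)) = g(\lambda z)$, hence $\varphi^{k}(g(z)) = g(\lambda^{k} z)$. Setting $w = g^{-1}(z)$ and expanding,
$$\varphi^{k}(z) - z \;=\; g(\lambda^{k} w) - g(w) \;=\; (\lambda^{k} - 1) \sum_{j \geq 1} \alpha_{j}\, \Lambda_{j}(k)\, w^{j},$$
where $\Lambda_{j}(k) = 1 + \lambda^{k} + \cdots + \lambda^{k(j-1)}$ satisfies $|\Lambda_{j}(k)| \leq 1$ because $|\lambda| = 1$. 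Re-expanding $w = g^{-1}(z)$ in $z$ yields a formal factorization $\varphi^{k}(z) - z = (\lambda^{k} - 1) H_{k}(z)$ in which each coefficient of $H_{k}$ is a fixed polynomial combination of finitely many $\alpha_{i}$'s, $\Lambda_{j}(k)$'s, and coefficients of $g^{-1}$, so its valuation is bounded below by a quantity depending only on its index in $z$, not on $k$. The Newton polygon of $\varphi^{k}(z) - z$ is the Newton polygon of $H_{k}$ shifted vertically by $v(\lambda^{k} - 1)$, with identical slopes. Hence the slopes, and therefore the valuations of nonzero roots, are uniformly bounded in $k$.

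\textbf{Case 2: $\lambda$ is a root of unity (so $\chr F = 0$).} Let $d$ be the order of $\lambda$ and replace $\varphi$ with $\varphi^{d}$ so that the multiplier at $0$ becomes $1$. If this iterate is the identity, the theorem's second alternative holds. Otherwise $\varphi^{d}(z) = z + a_{m} z^{m} + O(z^{m+1})$ with $a_{m} \neq 0$ and $m \geq 2$; by Theorem~\ref{infodump}, there is a formal $g$ with $g(T_{m}(z)) = \varphi^{d}(g(z))$, where $T_{m}(z) = z + z^{m}$. The coefficient of $z^{m+j}$ in $T_{m}^{k}(z) - z$ is a polynomial $P_{j}(k)$ in $k$; since $T_{m}^{0} = \mathrm{id}$, we have $P_{j}(0) = 0$, hence $P_{j}(k) = k \cdot Q_{j}(k)$ for some polynomial $Q_{j}$. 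An induction on $j$ using $T_{m}^{k+1} = T_{m} \circ T_{m}^{k}$ produces a uniform-in-$k$ bound on $v(Q_{j}(k))$, so the Newton polygon of $T_{m}^{k}(z) - z$ is a fixed polygon shifted vertically by $v(k)$; conjugating by $g$ carries the same conclusion to $\varphi^{dk}(z) - z$, giving uniform isolation.

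\textbf{Main obstacle.} The technical heart is Case~2, specifically the uniform bound on $v(Q_{j}(k))$. Each coefficient of $T_{m}^{k}(z) - z$ can be written as an iterated orbit sum $\sum_{i < k}(\mathrm{polynomial \ in \ } T_{m}^{i}(z))$, producing binomial-type expressions in $k$ whose $p$-adic valuations grow only logarithmically. The hypothesis $\chr F = 0$ is essential here: in positive characteristic, $k$ can be divisible by $\chr F$, collapsing $P_{j}(k) = 0$ identically and allowing new periodic points to accumulate at $0$. This one-variable Newton-polygon computation is the combinatorial prototype of the several-variable Newton-polytope argument (following Rabinoff~\cite{Rab}) to be carried out in the rest of this section.
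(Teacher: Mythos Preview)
Your argument has a genuine gap at the same place in both cases: you pass from ``each coefficient of $H_{k}$ (respectively of $T_{m}^{k}-z$ divided by $k$) has a $k$-independent lower bound on its valuation'' to ``the Newton polygon has $k$-independently bounded slopes.'' That implication is false without further control. The Newton polygon of $H_{k}$ begins at $(1,0)$, and a negative slope of size $m$ requires some vertex at $(j,y_{j})$ with $y_{j}\le -m(j-1)$. Your coefficient bounds give $y_{j}\ge -D_{j}$ with $D_{j}$ independent of $k$, so the steepest slope is bounded by $\sup_{j}D_{j}/(j-1)$; this is finite only if $D_{j}=O(j)$, i.e.\ only if the formal conjugacy $g$ actually converges. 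You never verify this, and in Case~2 your stronger claim that the Newton polygon of $T_{m}^{k}(z)-z$ is ``a fixed polygon shifted vertically by $v(k)$'' is simply false: for $T_{2}(z)=z+z^{2}$ at $p=2$, one computes $T_{2}^{2}(z)-z=2z^{2}+2z^{3}+z^{4}$, whose polygon has a segment of slope $-1/2$, while $T_{2}(z)-z=z^{2}$ has only a single vertex; these are not translates of one another, and indeed $-1\pm i$ are genuine $2$-periodic points of positive valuation.

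What you are missing is the paper's central combinatorial observation: if $z_{0}$ is a nonzero periodic point of exact period $k$, then $z_{0},\varphi(z_{0}),\ldots,\varphi^{k-1}(z_{0})$ are $k$ distinct roots of $\varphi^{k}(z)-z$, all of the same valuation $m$ (since $|\varphi(z)|=|z|$ for $|z|<1$ in the indifferent case). Hence the Newton polygon of $\varphi^{k}(z)-z$ has a segment of slope $-m$ of horizontal length at least $k$, so it drops by at least $mk$. The paper then computes directly---with no conjugacy at all---that the leftmost vertex of this polygon sits at height $v(\lambda^{k}-1)$ (or $v(k)+v(a_{e})$ when $\lambda=1$), and shows this height is $O(k)$ in characteristic $p$ and $O(\log k)$ in characteristic $0$. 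Since all higher coefficients have nonnegative valuation, the total drop is at most this height, whence $mk\le Ck$ and $m\le C$. This orbit-counting step is what converts a bound on a single coefficient into a uniform bound on root valuations, and it is entirely absent from your write-up. Note also that if you do invoke convergence of $g$ to repair Case~1, the argument collapses to a triviality (an analytic conjugate of $z\mapsto\lambda z$ has no nonzero periodic points), and you still need the orbit-counting idea for $z+z^{e}$ in Case~2.
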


\begin{proof}First, replacing $\varphi$ with an iterate if necessary, we assume that $\lambda \equiv 1 \mod \mathfrak{m}$; if $F$ is so big that its residue field is larger than $\overline{\mathbb{F}_{p}}$ and $\lambda$ reduces to a transcendental element mod $\mathfrak{m}$, then as we will shortly see, the theorem is trivial.

Now, we will consider the Newton polygons of $\varphi^{k}(z) - z$, and see that their negative finite slopes cannot be too steep. Throughout this proof, we assume that $\varphi^{k}(z) - z$ is never the zero series, which would imply $\varphi^{k}(z)$ is the identity.

We look at the first nonzero term of $\varphi^{k}(z) - z$. Suppose first that $\lambda$ is not a root of unity, so that we may write it as $\lambda = 1 + b_{1}\pi^{e_{1}} + b_{2}\pi^{e_{2}} + \ldots$ where $v(\pi) = 1$ and $e_{i} \in \mathbb{R}$ (if $F = \mathbb{C}_{p}$ then in fact $e_{i} \in \mathbb{Q}$). The first nonzero term of $\varphi^{k}(z) - z$ is $\lambda^{k} - 1$. Note that if there is a root $z$ of valuation $m$, then there must be $k$ such roots, corresponding to the entire cycle of $z$ (if $z$ is actually of lower period, say $l$, with $l | k$, then we will see it as a root of $\varphi^{l}(z) - z$). Therefore, if we show that $v(\lambda^{k} - 1) \leq Ck$ where $C$ is some constant, then it will bound the valuation of any nonzero periodic point of $\varphi$ by $C$, as required. Note that if $\lambda \not\equiv 1 \mod \mathfrak{m}$ then $v(\lambda^{k} - 1) = 0$ and therefore we have a trivial bound.

Suppose now that $p \nmid k$. It is not hard to see that $\lambda^{k} = 1 + kb_{1}\pi^{e_{1}} + \ldots$, so that $v(\lambda^{k} - 1) = e_{1}$. The case of interest is then what happens when $k$ is a power of $p$. We will assume $k = p$ to give a recursive formula. If $F$ has characteristic $p$, then $\lambda^{p} = 1 + b_{1}^{p}\pi^{pe_{1}} + \ldots$ by Frobenius, and $\lambda^{p^{s}} = 1 + b_{1}^{p^{s}}\pi^{p^{s}e_{1}} + \ldots$ Therefore, $v(\lambda^{p^{s}} - 1) = p^{s}e_{1}$, and we obtain $v(\lambda^{k} - 1) \leq ke_{1}$, as required.

From now on, we assume $F$ has characteristic $0$, in which case we might as well choose $\pi = p$. We have $$\lambda^{p} = (1 + b_{1}p^{e_{1}} + \ldots)^{p} = 1 + pb_{1}p^{e_{1}} + \ldots + b_{1}^{p}p^{pe_{1}} + \ldots$$ Which term is dominant, $b_{1}p^{e_{1} + 1}$ or $b_{1}^{p}p^{pe_{1}}$, depends on the valuation of $e_{1}$: if $v(e_{1}) > 1/(p-1)$ then $b_{1}p^{e_{1} + 1}$ dominates, and if $v(e_{1}) < 1/(p-1)$ then $b_{1}^{p}p^{pe_{1}}$ dominates. If $v(e_{1}) = 1/(p-1)$ then we have a resonance, and $v(\lambda^{p} - 1)$ can be arbitrarily large, since we can write the $p$th roots of unity in $\mathbb{C}_{p}$ in this form with $e_{1} = 1/(p-1)$. But we are assuming that $\lambda$ is not a root of unity, so $v(\lambda^{p} - 1)$ is finite. This implies that we can replace $\varphi$ with a $p$-power iterate until $e_{1} > 1/(p-1)$. In this case, we have $\lambda^{p} = 1 + b_{1}p^{e_{1} + 1} + \ldots$ and $\lambda^{p^{s}} = 1 + b_{1}p^{e_{1} + s} + \ldots$ Thus $v(\lambda^{k} - 1) \leq e_{1} + \log_{p}k$.

Now, we assume $\lambda$ is a root of unity, and $F$ has characteristic $0$. We replace $\varphi$ by an iterate to fix $\lambda = 1$. This means that $\varphi(z) = z + a_{e}z^{e} + \ldots$ and the lowest nonzero term is $a_{e}$. Let us compute a formula for $\varphi^{k}(z)$. We prove by induction that in fact $\varphi^{k}(z) = z + ka_{e}z^{e} + \ldots$ This is clearly true if $k = 1$, so let us assume it is true for $k$, and prove it for $k+1$. We have $$\varphi^{k+1}(z) = \varphi^{k}(\varphi(z)) = \varphi(z) + ka_{e}(\varphi(z))^{e} + \ldots = (z + a_{e}z^{e} + \ldots) + ka_{e}(z + \ldots)^{e} + \ldots = z + (k+1)a_{e}z^{e} + \ldots$$ Thus, the first nonzero term of $\varphi^{k}(z) - z$ is $ka_{e}z^{e}$, which has valuation $v(k) + v(a_{e})$. As in the irrationally indifferent case, we do not need to bound this valuation, but only show that it grows at worst as $O(k)$. But clearly $v(k) + v(a_{e}) \in O(k)$ -- in fact, the growth rate is logarithmic in $k$, as in the characteristic-$0$ irrationally indifferent case.\end{proof}

We will extend the arguments in the proof of Theorem~\ref{rl} to higher dimension using the Newton polytope as developed by Rabinoff in~\cite{Rab}. The Newton polytope is the natural higher-dimensional generalization of the Newton polygon: the Newton polytope of a power series $\sum c_{\mathbf{\alpha}}x_{1}^{\alpha_{1}}\ldots x_{n}^{\alpha_{n}}$ is an $n$-dimensional linear complex in $\mathbb{R}^{n+1}$ obtained as the lower convex hull of the set $\{(\alpha_{1}, \ldots, \alpha_{n}, v(c_{\mathbf{\alpha}}))\}$. Here we label the axes of $\mathbb{R}^{n+1}$ by $w_{1}, \ldots, w_{n}, y$, and say ``lower'' to mean ``lower along the $y$-axis.''

The segments of the Newton polytope have slopes, which we define as $\partial y/\partial w_{i}$. The theorem of the Newton polygon is that, in its domain of convergence, a single-variable power series $g(z)$ has roots of valuation $m$ ($m \in \mathbb{R} \cup \{\infty\}$) only if the Newton polygon of $g$ has a segment of slope $-m$, and conversely if the Newton polygon of $g$ has a segment of slope $-m$ and horizontal length $k$ then $g$ has exactly $k$ roots of valuation $m$. A multivariable power series $g(x_{1}, \ldots, x_{n})$ has zeros of polyvaluation $(m_{1}, \ldots, m_{n})$ only if, at this polyvaluation, $g$ has more than $1$ term of maximum absolute value (and if it has finitely many such terms, this is if and only if); in Newton polytope language, this means $g$ has a segment of polyslope $(\partial w_{1}: \ldots: \partial w_{n}: \partial y)$ whose dot product with the projective vector $(m_{1}: \ldots: m_{n}: 1)$ is zero.

Rabinoff proves a generalization of the theorem of the Newton polygon to intersections of $n$ power series in $n$ variables, provided those intersections are proper; see Definition 11.2, Example 11.3, and Theorem 11.7 of~\cite{Rab}. We will now go over the exact statement of the theorem of the Newton polytope.

\begin{defn}\label{MV}Let $P_{1}, \ldots, P_{n}$ be polytopes in $\mathbb{R}^{n}$; for our purposes, a polytope is a closed, bounded, convex region, defined by finitely many linear equations and inequalities, and may have any dimension from $0$ (in which case it is a point) to $n$. We set $$c_{1}P_{1} + \ldots + c_{n}P_{n} = \{c_{1}p_{1} + \ldots + c_{n}p_{n}: p_{i} \in P_{i}\}$$ We have a volume function $$V_{P_{1}, \ldots, P_{n}}: \mathbb{R}^{n} \to \mathbb{R}, V_{P_{1}, \ldots, P_{n}}(c_{1}, \ldots, c_{n}) = \Vol (c_{1}P_{1} + \ldots + c_{n}P_{n})$$ It is a homogeneous polynomial of degree $n$. We define the \textbf{mixed volume} of $P_{1}, \ldots, P_{n}$ to be the $c_{1}\ldots c_{n}$-coefficient of the polynomial $V_{P_{1}, \ldots, P_{n}}$; we denote the mixed volume by $MV(P_{1}, \ldots, P_{n})$.\end{defn}

\begin{rem}The mixed volume is clearly translation-invariant. Observe now that if each $P_{i}$ is one-dimensional, and is a line segment from the origin to the point $v_{i} \in \mathbb{R}^{n}$, then $MV(P_{1}, \ldots, P_{n}) = \det(v_{1}, \ldots, v_{n})$. Observe also that if any $P_{i}$ is a point, then the mixed volume is zero, since $V_{P_{1}, \ldots, P_{n}}$ then does not depend on $c_{i}$, which means all terms including $c_{i}$, including $c_{1}\ldots c_{n}$, are zero. Finally, observe that if $P_{1} = \ldots = P_{n}$ then $c_{1}P_{1} + \ldots + c_{n}P_{n} = (c_{1} + \ldots + c_{n})P_{1}$ and then, reading off the mixed coefficient of $(c_{1} + \ldots + c_{n})^{n}$, we obtain $MV(P_{1}, \ldots, P_{n}) = n!\Vol P_{1}$.\end{rem}

\begin{thm}\label{rabinoff}(Rabinoff) Let $g_{1}, \ldots, g_{n}$ be power series over $F$ in $n$ variables. For each polyvaluation $(m_{1}, \ldots, m_{n})$, we define $P_{i}$ to be the convex hull in $\mathbb{R}^{n}$ of the points $(\alpha_{1}, \ldots, \alpha_{n})$ over all vectors $\mathbf{\alpha}$ such that $v(c_{i, \mathbf{\alpha}}) + \alpha_{1}m_{1} + \ldots + \alpha_{n}m_{n}$ is minimized; alternatively, this is the projection to $\mathbb{R}^{n}$ of the relevant segment of the Newton polytope of $g_{i}$. Suppose that $g_{1}, \ldots, g_{n}$ intersect properly. Then, counting multiplicity, the number of common zeros of $g_{1}, \ldots, g_{n}$ of polyvaluation $(m_{1}, \ldots, m_{n})$ equals $MV(P_{1}, \ldots, P_{n})$.\end{thm}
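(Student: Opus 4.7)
The plan is to reduce Theorem~\ref{rabinoff} to the classical Bernstein--Kushnirenko theorem via an initial-form argument, which is the standard bridge between tropical/valuation-theoretic counts and toric-polynomial counts. First, for the fixed polyvaluation $\mathbf{m} = (m_1, \ldots, m_n)$, I would define the initial form $\text{in}_{\mathbf{m}}(g_i)$ as the sum of those terms $c_{i,\alpha} x^{\alpha_1}_1 \cdots x_n^{\alpha_n}$ for which the weighted valuation $v(c_{i,\alpha}) + \alpha_1 m_1 + \ldots + \alpha_n m_n$ attains its minimum. By construction this is a (nonzero, since the $g_i$ converge at polyvaluation $\mathbf{m}$) Laurent polynomial whose Newton polytope is exactly $P_i$. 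After rescaling the coefficients by a uniformizer raised to the minimal weighted valuation, its reduction modulo the maximal ideal is a well-defined nonzero Laurent polynomial $\overline{g_i}$ over the residue field, again with Newton polytope $P_i$.

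Second, I would establish a multiplicity-preserving bijection between common zeros of $g_1, \ldots, g_n$ at polyvaluation $\mathbf{m}$ and common zeros of $\overline{g_1}, \ldots, \overline{g_n}$ in the torus $(k^\times)^n$ over the residue field $k$. In one direction, a common zero $(z_1, \ldots, z_n)$ with $v(z_i) = m_i$ reduces, after division by $\pi^{m_i}$ for a uniformizer $\pi$ in the algebraic closure of the value group, to a torus point on which every $\overline{g_i}$ vanishes. In the other direction, a simple torus zero lifts by a multivariable Hensel's lemma applied to the rescaled system on an affinoid of polyvaluation $\mathbf{m}$; proper intersection ensures the Jacobian condition is met generically and lets one handle multiple roots by a flat deformation argument.

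Third, I would invoke the classical Bernstein--Kushnirenko theorem for $n$ Laurent polynomials with Newton polytopes $P_1, \ldots, P_n$ over the algebraically closed residue field: the number of common zeros in $(k^\times)^n$, counted with multiplicity, is at most $MV(P_1, \ldots, P_n)$, with equality under a genericity hypothesis that is precisely the proper-intersection assumption on the $g_i$. Combined with Step~2, this gives the desired mixed-volume count.

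The main obstacle is Step~2: verifying that counting multiplicities at polyvaluation $\mathbf{m}$ on the analytic side really matches counting multiplicities on the toric side. Making this rigorous for power series (rather than polynomials) requires a Gr\"{o}bner-style framework over the valuation ring to handle the truncation of non-leading terms, or equivalently a careful affinoid-geometry argument showing that deforming the non-leading coefficients of each $g_i$ does not change the local zero count. Once this lifting and multiplicity bookkeeping is in place, the appeal to Bernstein--Kushnirenko and the mixed-volume formula is essentially formal, and the rest of the theorem follows by collecting contributions from each polyvaluation separately.
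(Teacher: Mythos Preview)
The paper does not prove Theorem~\ref{rabinoff}; it is quoted as a result of Rabinoff, with a pointer to Definition~11.2, Example~11.3, and Theorem~11.7 of~\cite{Rab}, and no argument is given in the paper itself. So there is no ``paper's own proof'' to compare your proposal against.

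On its own merits, your sketch follows the standard strategy behind Rabinoff's result: pass to initial forms at the given polyvaluation, reduce to Laurent polynomials over the residue field, and invoke Bernstein--Kushnirenko. That is the right shape. A few points deserve care. First, your identification of ``the genericity hypothesis in Bernstein--Kushnirenko'' with ``proper intersection of the $g_i$'' is too quick: proper intersection guarantees finitely many common zeros, but equality in Bernstein--Kushnirenko over the residue field also requires that the reduced system $\overline{g_1},\ldots,\overline{g_n}$ has no zeros escaping to the toric boundary; in Rabinoff's framework this is handled by working with the full tropicalization and summing over all polyvaluations, not by a pointwise genericity claim. Second, your Step~2 is, as you acknowledge, the real content: matching analytic intersection multiplicities on the affinoid polyannulus with algebraic intersection multiplicities of the initial forms requires more than Hensel for simple zeros plus a vague deformation for multiple ones; Rabinoff does this via a careful continuity-of-intersection-number argument in rigid/affinoid geometry. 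If you want to turn your outline into a proof, that is where the work lies.
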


When we generalize the proof of Theorem~\ref{rl} to prove Theorem~\ref{isolatedindiff}, we will need to pass between the volumes of the polytopes $P_{i}$ and their mixed volume. It is the mixed volume that grows as $k$, and the volumes of the polytopes that, as we will see, grow logarithmically with $k$. Therefore, we need to directly relate $\Vol P_{1}, \ldots, \Vol P_{n}$ and $MV(P_{1}, \ldots, P_{n})$.

\begin{prop}\label{volrel}We have the inclusion-exclusion formula $$MV(P_{1}, \ldots, P_{n}) = \Vol(P_{1} + \ldots + P_{n}) - \sum_{i = 1}^{n}\Vol(P_{1} + \ldots + \hat{P_{i}} + \ldots + P_{n}) + \ldots + (-1)^{n}\sum_{i = 1}^{n}\Vol(P_{i}).$$\end{prop}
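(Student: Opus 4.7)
The plan is to recognize this as the standard polarization identity for mixed volumes. By Definition~\ref{MV}, $V_{P_1,\ldots,P_n}(c_1,\ldots,c_n) = \Vol(c_1P_1 + \cdots + c_nP_n)$ is a homogeneous polynomial of degree $n$ in the variables $c_1,\ldots,c_n$, and $MV(P_1,\ldots,P_n)$ is its coefficient of the monomial $c_1c_2\cdots c_n$. It therefore suffices to show that, for any such polynomial, evaluating at the $2^n$ zero-one indicator vectors $\mathbf{1}_S$ of subsets $S\subseteq\{1,\ldots,n\}$ and combining with alternating signs $(-1)^{n-|S|}$ extracts precisely this coefficient.

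First I would write $V = \sum_{|\alpha|=n} v_\alpha c_1^{\alpha_1}\cdots c_n^{\alpha_n}$ and observe that, with the convention $0^0 = 1$, evaluation at $\mathbf{1}_S$ picks out exactly those monomials whose support is contained in $S$, so that $V(\mathbf{1}_S) = \sum_{\mathrm{supp}(\alpha)\subseteq S} v_\alpha = \Vol(\sum_{i\in S}P_i)$. I would then form $\sum_{S\subseteq [n]}(-1)^{n-|S|}V(\mathbf{1}_S)$ and swap summations so that each coefficient $v_\alpha$ acquires the weight $\sum_{S\supseteq\mathrm{supp}(\alpha)}(-1)^{n-|S|}$. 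Setting $T=\mathrm{supp}(\alpha)$ and parametrizing $S = T\cup S'$ with $S'\subseteq [n]\setminus T$, this weight equals $(-1)^{n-|T|}(1-1)^{n-|T|}$, which vanishes unless $T=[n]$. Combined with $|\alpha|=n$, the only surviving multi-index is $\alpha=(1,\ldots,1)$, so the alternating sum collapses to $v_{(1,\ldots,1)} = MV(P_1,\ldots,P_n)$.

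To finish, I would reorganize this alternating sum by the cardinality $|S|$. The term $|S|=n$ contributes the single volume $\Vol(P_1+\cdots+P_n)$, the $|S|=n-1$ terms contribute $-\sum_i\Vol(P_1+\cdots+\hat{P_i}+\cdots+P_n)$, and so on down to $|S|=1$, yielding precisely the inclusion-exclusion expression in the proposition; the $|S|=0$ term is harmless since the Minkowski sum over the empty index set is $\{0\}$, which has volume zero.

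There is no serious obstacle: the entire proof is the standard finite-difference identity $\prod_i\Delta_i f = \sum_S (-1)^{n-|S|} f(\mathbf{1}_S)$ (with $\Delta_i f = f|_{c_i=1} - f|_{c_i=0}$) applied to $V$, combined with homogeneity of degree $n$ to kill every multi-index except $(1,\ldots,1)$. The only external input is the classical Minkowski theorem already invoked in Definition~\ref{MV}, which guarantees that $V_{P_1,\ldots,P_n}$ really is a polynomial.
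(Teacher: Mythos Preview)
Your proof is correct and follows essentially the same approach as the paper: both evaluate the homogeneous polynomial $V_{P_1,\ldots,P_n}$ at the indicator vectors $\mathbf{1}_S$, observe that this picks out the monomials supported on $S$, and then use the alternating sum over $S$ to kill every monomial whose support is proper, leaving only $c_1\cdots c_n$. Your framing via the finite-difference operator $\prod_i \Delta_i$ and the computation $(-1)^{n-|T|}(1-1)^{n-|T|}$ is a cleaner packaging of exactly the cancellation the paper carries out by counting even versus odd subsets of $\{P_{j+1},\ldots,P_n\}$.
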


\begin{proof}The central observation is that for each $j = 1, \ldots, n$, the quantity $\Vol(P_{1} + \ldots + P_{j})$ is equal to the sum of all coefficients of $V_{P_{1}, \ldots, P_{n}}(c_{1}, \ldots, c_{n})$ that contain powers of only $c_{1}, \ldots, c_{j}$. This means that the right-hand side of the formula in the statement of the proposition is a sum of coefficients of $V_{P_{1}, \ldots, P_{n}}(c_{1}, \ldots, c_{n})$.

We now consider each term that contains the variables $c_{1}, \ldots, c_{j}$ and only them, and figure out which of the volume formulas in the right-hand side of the statement of the proposition it is contained in. Clearly, the volume formula must include $P_{1}, \ldots, P_{j}$; it may also include any $P_{i}$ with $i > j$. Thus, we have $2^{n-j}$ different formulas that include these terms, corresponding to elements of the power set $\mathcal{P}(\{P_{j+1}, \ldots, P_{n}\})$. Now, the inclusion-exclusion formula in the proposition takes coefficient $1$ when the number of omitted variables is even and $-1$ when the number of omitted variables is odd. Whenever $j < n$, exactly half the elements of $\mathcal{P}(\{P_{j+1}, \ldots, P_{n}\})$ correspond to an even number of variables and exactly half correspond to an odd number of variables. Thus, each coefficient of a term that contains the variables $c_{1}, \ldots, c_{j}$ and only them cancels out on the right-hand side.

We are left with just one coefficient, namely the mixed term $c_{1}\ldots c_{j}$. Here, the only summand on the right-hand side that contains this coefficient is $\Vol(P_{1} + \ldots + P_{n})$. Thus, it occurs exactly once on the right-hand side. So we read off this coefficient, which is by definition $MV(P_{1}, \ldots, P_{n})$.\end{proof}

\begin{cor}\label{volbound}Suppose the polytope $P$ contains $P_{1}, \ldots, P_{n}$. Then $MV(P_{1}, \ldots, P_{n}) \leq n^{n+1}\Vol P$.\end{cor}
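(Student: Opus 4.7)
The plan is to upper-bound $MV(P_1, \ldots, P_n)$ by the volume of the full Minkowski sum $P_1 + \ldots + P_n$, and then apply the containment $P_i \subseteq P$. To that end, expand
$$V_{P_1, \ldots, P_n}(c_1, \ldots, c_n) = \Vol(c_1 P_1 + \ldots + c_n P_n) = \sum_{k_1 + \ldots + k_n = n} a_{\mathbf{k}} c_1^{k_1} \cdots c_n^{k_n},$$
so that $MV(P_1, \ldots, P_n) = a_{(1,\ldots,1)}$ by Definition~\ref{MV}. The first step is to show that every coefficient $a_{\mathbf{k}}$ is non-negative. If one forms the list $(Q_1, \ldots, Q_n)$ in which $P_i$ is repeated $k_i$ times and substitutes $c_j = \sum_{i : \sigma(i) = j} d_i$ into $V_{P_1, \ldots, P_n}$, then extracting the $d_1 d_2 \cdots d_n$ coefficient picks out only the monomial $c_1^{k_1} \cdots c_n^{k_n}$ and yields $MV(Q_1, \ldots, Q_n) = a_{\mathbf{k}} \cdot k_1! k_2! \cdots k_n!$. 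Since mixed volumes of polytopes are non-negative (in the setting of this paper this is immediate from Theorem~\ref{rabinoff}: any list of polytopes arises as Newton polytopes of suitable power series, and choosing coefficients generically makes the intersection proper, so $MV$ equals an honest count of common zeros), every $a_{\mathbf{k}} \geq 0$.

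Once non-negativity of every $a_{\mathbf{k}}$ is in hand, the remainder is a two-line estimate:
$$MV(P_1, \ldots, P_n) = a_{(1, \ldots, 1)} \leq \sum_{\mathbf{k}} a_{\mathbf{k}} = V_{P_1, \ldots, P_n}(1, 1, \ldots, 1) = \Vol(P_1 + \ldots + P_n).$$
The hypothesis $P_i \subseteq P$, together with monotonicity of Minkowski sums, gives $P_1 + \ldots + P_n \subseteq nP$, so $\Vol(P_1 + \ldots + P_n) \leq \Vol(nP) = n^n \Vol P \leq n^{n+1} \Vol P$, which is the claimed inequality (in fact with the sharper constant $n^n$).

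The main obstacle is the non-negativity of the coefficients $a_{\mathbf{k}}$. A purely combinatorial approach using Proposition~\ref{volrel} and the triangle inequality only yields $MV \leq \sum_{k=1}^n \binom{n}{k} k^n \Vol P$, and it is not straightforward to bound this binomial sum by $n^{n+1}$ with elementary estimates. The non-negativity route bypasses this combinatorial issue by realizing $MV$ as a single non-negative summand of an expansion whose total is the manifestly tame quantity $\Vol(P_1 + \ldots + P_n)$, and ties the estimate directly to the intersection-theoretic content of Theorem~\ref{rabinoff}.
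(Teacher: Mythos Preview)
Your argument is correct and genuinely different from the paper's. The paper applies Proposition~\ref{volrel} directly: it drops the negative terms in the inclusion--exclusion sum, bounds each surviving summand $\binom{n}{k}\Vol((n-k)P)$ by $n^{n}\Vol P$, and observes there are at most $\lfloor n/2\rfloor+1$ such summands, giving $n^{n+1}\Vol P$. You instead show that \emph{every} coefficient $a_{\mathbf{k}}$ of the homogeneous polynomial $V_{P_{1},\ldots,P_{n}}$ is a nonnegative multiple of a mixed volume of a repeated list, hence nonnegative; this lets you bound the single coefficient $a_{(1,\ldots,1)}$ by the full sum $V_{P_{1},\ldots,P_{n}}(1,\ldots,1)=\Vol(P_{1}+\ldots+P_{n})\le n^{n}\Vol P$. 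Your route is cleaner and yields the sharper constant $n^{n}$; the paper's route is more self-contained, needing nothing beyond Proposition~\ref{volrel} and crude volume estimates.

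One small point worth tightening: your appeal to Theorem~\ref{rabinoff} for the nonnegativity of $MV(Q_{1},\ldots,Q_{n})$ literally applies only when the $Q_{i}$ are lattice polytopes, since only those arise as Newton polytopes of power series. The corollary as stated is for arbitrary polytopes in $\mathbb{R}^{n}$. This is easily patched---either by a continuity/scaling argument reducing to the lattice case, or by simply citing the classical fact from convex geometry that mixed volumes are nonnegative---and in any event the paper only ever applies the corollary to lattice polytopes (in Lemma~\ref{implies}), so the gap is harmless in context.
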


\begin{proof}By Proposition~\ref{volrel}, we have,

\begin{align*}
MV(P_{1}, \ldots, P_{n}) &= \Vol(P_{1} + \ldots + P_{n}) - \sum_{i = 1}^{n}\Vol(P_{1} + \ldots + \hat{P_{i}} + \ldots + P_{n}) + \ldots + (-1)^{n}\sum_{i = 1}^{n}\Vol(P_{i})\\
&\leq \Vol(P_{1} + \ldots + P_{n}) + \sum_{1 \leq i < j \leq n}\Vol(P_{1} + \ldots + \hat{P_{i}} + \ldots + \hat{P_{j}} + \ldots + P_{n}) + \ldots\\
&+ \left(\sum_{i = 1}^{n}\Vol(P_{i}) \mbox{ or } \sum_{1 \leq i < j \leq n}\Vol(P_{i} + P_{j})\right)\\
&\leq \Vol(nP) + {n \choose 2}\Vol((n-2)P) + \ldots + {n \choose k}\Vol((n-k)P) + \ldots\\
&+ \left(n\Vol(P) \mbox{ or } {n \choose 2}\Vol(2P)\right)\\
&= \Vol(P)\left(n^{n} + {n \choose 2}(n-2)^{n} + \ldots + \left(n \mbox{ or } {n \choose 2}2^{n}\right)\right)\\
&\leq \Vol(P)\cdot n^{n}\cdot\left\lfloor\frac{n}{2}+1\right\rfloor\\
&\leq n^{n+1}\Vol(P)
\end{align*}

The $A \mbox{ or } B$ language consistently means $A$ if $n$ is even and $B$ if $n$ is odd.\end{proof}

In the proof of Theorem~\ref{rl}, we use the fact that the valuation of the lowest nonzero term of $\varphi^{k}(z) - z$ grows logarithmically in $k$ if $F$ has characteristic $0$, or at worst linearly if $F$ has characteristic $p$. Let us now show that, if we can show similar growth rates in $n$ dimensions, we can prove Theorem~\ref{isolatedindiff}.

\begin{defn}\label{minpoly}Set the partial order $\preceq$ on $\mathbb{Z}^{n}$ to be $\mathbf{\beta} \preceq \mathbf{\alpha}$ if $\beta_{i} \leq \alpha_{i}$ for $i = 1, \ldots, n$; we say $\mathbf{\beta} \prec \mathbf{\alpha}$ if $\mathbf{\beta} \preceq \mathbf{\alpha}$ and $\mathbf{\beta} \neq \mathbf{\alpha}$. Let $g(x_{1}, \ldots, x_{n}) \in F[[x_{1}, \ldots, x_{n}]]$. We say $x_{1}^{\alpha_{1}}\ldots x_{n}^{\alpha_{n}}$ is a \textbf{lowest-polydegree term} of $g$ if its coefficient is nonzero, and the coefficient of $x_{1}^{\beta_{1}}\ldots x_{n}^{\beta_{n}}$ is zero whenever $\mathbf{\beta} \prec \mathbf{\alpha}$. We will abuse terminology and also call $\mathbf{\alpha}$ a minimal polydegree of $g$. If $g_{1}, \ldots, g_{r} \in F[[x_{1}, \ldots, x_{n}]]$, then we say $x_{1}^{\alpha_{1}}\ldots x_{n}^{\alpha_{n}}$ is a lowest-polydegree term for the family $g_{1}, \ldots, g_{r}$ if its coefficient in one of the $g_{i}$s is nonzero, and the coefficient of $x_{1}^{\beta_{1}}\ldots x_{n}^{\beta_{n}}$ is zero in $g_{1}, \ldots, g_{r}$ whenever $\mathbf{\beta} \prec \mathbf{\alpha}$. If $x_{1}^{\alpha_{1}}\ldots x_{n}^{\alpha_{n}}$ is a lowest-polydegree term for the family $g_{1}, \ldots, g_{r}$, then we say its \textbf{minimal valuation} is its minimum valuation across all $g_{i}$s.\end{defn}

\begin{rem}For every finite collection $g_{1}, \ldots, g_{r}$, there are finitely many terms of lowest polydegree. If any $g_{i}$ has a nonzero constant term, then the constant term is the unique lowest-polydegree term.\end{rem}

\begin{lem}\label{implies}Let $\varphi = (\varphi_{1}, \ldots, \varphi_{n}) \in (F[[x_{1}, \ldots, x_{n}]])^{n}$ be an analytic map, such that $\varphi_{i}(0) = 0$ for all $i$ with all multipliers indifferent, and all the coefficients of all $\varphi$ have nonnegative valuation. Suppose that for every iterate $\varphi^{k} = (\varphi_{1, k}, \ldots, \varphi_{n, k})$, the lowest-polydegree terms of the collection $\varphi_{1, k} - x_{1}, \ldots, \varphi_{n, k} - x_{n}$ have total degree bounded by an absolute constant $e$ and minimal valuation bounded by a function $h(k) \in O(\sqrt[n]{k})$. Then $0$ is isolated as a periodic point, or else $\varphi$ has a positive-dimension pointwise fixed subvariety passing through $0$.\end{lem}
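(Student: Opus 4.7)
The plan is to transport the Newton-polygon proof of Theorem~\ref{rl} to $n$ variables, with Rabinoff's Theorem~\ref{rabinoff} playing the role of the $1$-dimensional Newton-polygon count and Corollary~\ref{volbound} converting mixed volumes into the volume of a single containing polytope. Suppose for contradiction that $0$ is not isolated and that no iterate $\varphi^k$ has a positive-dimensional pointwise fixed subvariety through $0$. Non-isolation gives a sequence of periodic points $z^{(j)} \to 0$ of exact periods $k_j$. If $\{k_j\}$ were bounded, some $k$ would repeat infinitely often, so the power series $g_{i,k} := \varphi_{i,k} - x_i$ ($i=1,\ldots,n$) would have infinitely many common zeros accumulating at $0$; by Noetherianity of the local ring of convergent power series at $0$, their common zero set would contain a positive-dimensional analytic component through $0$, pointwise fixed by $\varphi^k$, which we have excluded. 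Hence we may assume $k_j \to \infty$.

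For $k = k_j$, the full orbit of $z^{(j)}$ gives $k$ distinct common zeros of $g_{1,k}, \ldots, g_{n,k}$; since $\varphi$ has coefficients of nonnegative valuation, the orbit lies in a polydisk $\{|z_i| \leq p^{-m_0(j)}\}$ for some $m_0(j) \to \infty$. Assuming proper intersection (else the previous Noetherianity argument again gives a forbidden fixed subvariety), Theorem~\ref{rabinoff} expresses the total count of common zeros in this polydisk as
\begin{equation*}
\sum_{\mathbf{m}\,:\,\min_i m_i \geq m_0(j)} MV\bigl(P_{1,k}^{(\mathbf{m})}, \ldots, P_{n,k}^{(\mathbf{m})}\bigr).
\end{equation*}
From the hypothesis, any lowest-polydegree term $\mathbf{\alpha}_0$ of $g_{i,k}$ has total degree at most $e$ and some such term has valuation at most $h(k)$; for any $\mathbf{m}$ with $\min m_i \geq m_0$ and any $\mathbf{\beta}$ appearing in $P_{i,k}^{(\mathbf{m})}$, the defining relation $v(c_{i,k,\mathbf{\beta}}) + \mathbf{\beta} \cdot \mathbf{m} \leq h(k) + \mathbf{\alpha}_0 \cdot \mathbf{m}$ combined with $v(c_{i,k,\mathbf{\beta}}) \geq 0$ forces $\mathbf{\beta}$ to lie in a simplex of diameter $O(h(k)/m_0 + e)$ based at a point of total degree at most $e$. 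Packaging the sum as a BKK-style count for a single truncated Newton polytope, Corollary~\ref{volbound} then bounds the entire sum by $C \bigl(h(k)/m_0(j) + e\bigr)^n$ for a constant $C$ depending only on $n$.

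Combining the lower bound of $k_j$ orbit points with this upper bound gives $k_j \leq C\bigl(h(k_j)/m_0(j) + e\bigr)^n$, hence, by the hypothesis $h(k) \in O(\sqrt[n]{k})$, $k_j \leq C'\bigl(k_j/m_0(j)^n + e^n\bigr)$. For $j$ large enough that $m_0(j)^n > 2C'$, this rearranges to $k_j \leq 2C' e^n$, contradicting $k_j \to \infty$. The main obstacle I expect is the packaging step in the middle paragraph: Rabinoff's theorem natively produces one mixed volume per polyvaluation $\mathbf{m}$, and showing that the sum over all relevant $\mathbf{m}$ is itself bounded by a single mixed volume (to which Corollary~\ref{volbound} applies) requires identifying the correct ``lower-left'' truncation of each Newton polytope $N_{i,k}$ and verifying that the truncated system has the same count of common zeros in the polydisk $\min m_i \geq m_0$ as the original. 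With the right truncation, all contributing $P_{i,k}^{(\mathbf{m})}$ sit inside a single simplex of diameter $O(h(k)/m_0 + e)$ and one application of Corollary~\ref{volbound} closes the argument.
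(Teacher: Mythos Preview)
Your overall plan matches the paper's: convert the minimal-polydegree hypothesis into a bound on the Newton polytopes, apply Rabinoff's count and Corollary~\ref{volbound}, and compare the resulting $O\bigl((h(k)/m + e)^n\bigr)$ against the $k$ orbit points. But there is a real gap in how you pass from the hypothesis to the per-series polytope bound.

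You assert that ``any lowest-polydegree term $\mathbf{\alpha}_0$ of $g_{i,k}$ has total degree at most $e$ and some such term has valuation at most $h(k)$,'' and then feed the inequality $v(c_{i,k,\mathbf{\alpha}_0}) \leq h(k)$ into the dominance relation for $P_{i,k}^{(\mathbf{m})}$. The hypothesis, however, is about the minimal-polydegree terms of the \emph{collection} $g_{1,k},\ldots,g_{n,k}$, not of each $g_{i,k}$ separately. A minimal polydegree of $g_{i,k}$ alone can have total degree far exceeding $e$ (there may be some $\mathbf{\beta}\prec\mathbf{\alpha}_0$ with $c_{j,k,\mathbf{\beta}}\neq 0$ for $j\neq i$ but $c_{i,k,\mathbf{\beta}}=0$); and even when a collection-minimal $\mathbf{\alpha}$ does appear in $g_{i,k}$, the bound $h(k)$ controls only $\min_j v(c_{j,k,\mathbf{\alpha}})$, not $v(c_{i,k,\mathbf{\alpha}})$. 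So for some indices $i$ you have no usable anchor term, and the containment of $P_{i,k}^{(\mathbf{m})}$ in a simplex of side $O(h(k)/m_0 + e)$ fails. Since the mixed volume needs all $n$ polytopes controlled, the estimate collapses.

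The paper repairs exactly this by a $\GL_n(\mathcal{O}_F)$ trick you are missing: it replaces $(g_{1,k},\ldots,g_{n,k})$ by $A\mathbf{g}$ for a generic $A\in\GL_n(\mathcal{O}_F)$, chosen (by avoiding finitely many linear conditions on the residue field) so that \emph{every} row $\sum_j A_{ij}g_{j,k}$ acquires each collection-minimal polydegree with coefficient of valuation equal to the collection's minimal valuation. Because $A\in\GL_n(\mathcal{O}_F)$, common zeros and their valuations are preserved, and now the simplex bound is legitimate for all $n$ series simultaneously. Once you insert this step, your argument goes through.

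On the packaging issue you flag: the paper does not sum mixed volumes over all $\mathbf{m}$ in the polydisk. It works at a single polyvaluation $(m_1,\ldots,m_n)$ with $\min m_i > m$, bounds the dominant-term polytopes by $(e+h(k)/m)U$, and applies Corollary~\ref{volbound} once. Isolation then follows because a point of exact period $k$ in the polydisk brings its whole $k$-orbit along (forward invariance from indifferent multipliers and integral coefficients), and for $m$ large enough depending only on the constant in $h(k)\in O(k^{1/n})$, the mixed-volume bound is below $k$. This is simpler than your proposed truncation, though the paper is terse about why the $k$ orbit points are captured by a single-polyvaluation count; you may want to think that through rather than take it on faith.
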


\begin{proof}We are going to intersect $\varphi_{i, k} - x_{i}$, using the volume estimate in Corollary~\ref{volbound}. Since we are ultimately invoking Theorem~\ref{rabinoff}, we assume throughout this proof that the analytic hypersurfaces $\varphi_{i, k} - x_{i}$ intersect properly near $0$.

We need to find some constant $m$, such that if $m_{1}, \ldots, m_{n} > m$, then the system $\varphi_{i, k} - x_{i} = 0$ has no roots of polyvaluation $(m_{1}, \ldots, m_{n})$. If we find roots of small absolute value (in this context, this is the maximal absolute value of all the coordinates), then we shrink $m$; we will show that if we shrink $m$ enough, then we will not find any more roots.

The key fact here is that if $A \in \GL_{n}(\mathcal{O}_{F})$ and $(z_{1}, \ldots, z_{n})$ is a root of the system $g_{1} = \ldots = g_{n} = 0$, then $A^{-1}\mathbf{z}$ is a root of the system $A\mathbf{g} = 0$, and moreover, if $v(z_{i}) \geq m$ for each $i$, then $v(A_{i}\mathbf{z}) \geq m$ for each $i$ as well. Moreover, we can choose $A$ such that each power series $A_{i}\mathbf{g}$ has the same minimal-polydegree terms as the collection $g_{1}, \ldots, g_{n}$, with each $\mathbf{x}^{\mathbf{\alpha}}$ having coefficient of valuation equal to the minimal valuation of $\mathbf{x}^{\mathbf{\alpha}}$. This is because we have $$v\left(\sum_{j = 1}^{n}A_{ij}c_{j, \mathbf{\alpha}}\right) = \min\{v(A_{ij}c_{j, \mathbf{\alpha}})\}$$ as long as we avoid finitely many conditions of the form $$\sum_{j = 1}^{n}A'_{ij}c'_{j, \mathbf{\alpha}} = 0,$$ where $A'_{ij}$ is the reduction mod $\mathfrak{m}$ of $A_{ij}$, $c'_{j, \mathbf{\alpha}}$ is the reduction mod $\mathfrak{m}$ of $c_{j, \mathbf{\alpha}}/p^{s}$, and $s$ is the minimal valuation of $\mathbf{\alpha}$. By the definition of the minimal valuation $s$, the expression $\sum A'_{ij}c'_{j, \mathbf{\alpha}}$ is not identically $0$. Now, since $F$ is algebraically closed, we can find many matrices in $\GL_{n}(\mathcal{O}_{F}/\mathfrak{m})$ missing the finitely many nontrivial linear conditions $\sum A'_{ij}c'_{j, \mathbf{\alpha}} = 0$ on their coefficients.

In the sequel, we will assume that, for each $k$, we have chosen $A$ such that for all $i = 1, \ldots, n$, the power series $$\sum_{j = 1}^{n}A_{ij}(\varphi_{j, k} - x_{j})$$ has the same minimal-polydegree terms as the collection $\varphi_{i, k} - x_{i}$, with each term having valuation bounded by $h(k)$. We also write $c_{i, \mathbf{\alpha}, k}$ for the $\mathbf{x}^{\mathbf{\alpha}}$-term of $\sum A_{ij}(\varphi_{j, k} - x_{j})$ and not for the $\mathbf{x}^{\mathbf{\alpha}}$-term of $\varphi_{i, k}$.

Now, suppose there is a root of polyvaluation $(m_{1}, \ldots, m_{n})$, and that one of its dominant terms in $\sum A_{ij}(\varphi_{j, k} - x_{j})$ is $\mathbf{x}^{\mathbf{\alpha}}$. We will bound the total degree $\alpha_{1} + \ldots + \alpha_{n}$. If $\mathbf{\alpha}$ is one of the lowest polydegrees of the collection $\varphi_{i, k} - x_{i}$ then its total degree is already bounded by $e$, so assume it is not minimal. For any other vector $\beta$, we have $c_{i, \mathbf{\beta}, k} - c_{i, \mathbf{\alpha}, k} \geq \mathbf{m}\cdot(\mathbf{\beta} - \mathbf{\alpha})$, with equality if and only if $c_{i, \mathbf{\beta}, k}\mathbf{x}^{\mathbf{\beta}}$ is another dominant term.

By the definition of minimal polydegree, there exists some minimal polydegree $\mathbf{\beta} \prec \mathbf{\alpha}$. For this polydegree, we have on the one hand $v(c_{i, \mathbf{\beta}, k}) \leq h(k)$, but on the other hand $c_{i, \mathbf{\beta}, k} - c_{i, \mathbf{\alpha}, k} \geq \mathbf{m}\cdot(\mathbf{\beta} - \mathbf{\alpha})$. We rearrange and get

\begin{align*}
\mathbf{m}\cdot(\mathbf{\beta} - \mathbf{\alpha}) &\leq h(k) - c_{i, \mathbf{\alpha}, k}\\
&\leq h(k) \quad \mbox{since $\varphi$ and its iterates have positive-valuation coefficients.}\\
\end{align*}
\noindent Conversely, $\mathbf{\beta} - \mathbf{\alpha}$ has nonnegative entries; hence, $$m(\beta_{1} + \ldots + \beta_{n} - \alpha_{1} - \ldots - \alpha_{n}) \leq \mathbf{m}\cdot(\mathbf{\beta} - \mathbf{\alpha}) \leq h(k).$$ This implies that the total degree of $\mathbf{\beta}$ is bounded by $e + h(k)/m$.

We now apply Theorem~\ref{rabinoff} and Corollary~\ref{volbound}. The polytopes in $\mathbb{R}^{n}$ arising as the convex hulls of the dominant terms in each $\sum A_{ij}(\varphi_{j, k} - x_{j})$ are in fact spanned by elements of total degree bounded by $e + h(k)/m$, which means they are contained in the simplex $(e + h(k)/m)U$ where $U$ is the unit simplex, spanned by the origin and by the unit vectors (or points) $\mathbf{e}_{1}, \ldots, \mathbf{e}_{n}$. The volume of $(e + h(k)/m)U$ has growth rate $(e + h(k)/m)^{n} \in O(k)$. This means that if we take $k$  and $m$ large enough, then we can ensure $(e + h(k)/m)^{n}\Vol(U) < k$. This means that the equations $\sum A_{ij}\varphi_{j, k} = \sum A_{ij}x_{i}$ cannot intersect in $k$ points of polyvaluation less than $m$. Since $A \in \GL_{n}(\mathcal{O}_{F})$, this means the equations $\varphi_{i, k} = x_{i}$ cannot intersect in $k$ points of polyvaluation less than $m$, either. Since we have picked $m$ to avoid any possible points of smaller period, this means that $\varphi_{i, k} = x_{i}$ cannot intersect in any point of polyvaluation less than $m$, or else there would be $k$ such points.\end{proof}

We are now ready to prove the main result of this section.

\begin{proof}[Proof of Theorem~\ref{isolatedindiff}.]We will find explicit formulas for the lowest-polydegree terms of the collection $\varphi_{i, k}(x_{1}, \ldots, x_{n}) - x_{i}$ for each $k$. We will show their minimal valuations grow logarithmically in $k$, and apply Lemma~\ref{implies}.

As in the proof of Theorem~\ref{rl}, we assume $\lambda_{i} \equiv 1 \mod \mathfrak{m}$ for each $i$, replacing $\varphi$ by an iterate if necessary; if because $F$ is so big we cannot make this assumption, and in fact $\lambda_{i}^{k} \not\equiv 1 \mod \mathfrak{m}$ for any $k$, then the theorem is in fact substantially easier to prove, if not so trivial as in the one-dimensional case. We also assume that if $\lambda_{i}$ is a root of unity, then $\lambda_{i} = 1$.


First, suppose $\lambda_{i}$ is not a root of unity. In that case, $\varphi_{i, k} - x_{i}$ has a nonzero $x_{i}$-term, equal to $\lambda_{i}^{k} - 1$. We write $\lambda_{i} = 1 + b_{1}\pi^{e_{1}} + \ldots$, replacing $\varphi$ by an iterate if necessary to ensure $v(e_{1}) > 1/(p-1)$, and as in the proof of Theorem~\ref{rl}, we obtain $v(\lambda_{i}^{k} - 1) \leq e_{1} + \log_{p}k$.

Using Lemma~\ref{implies}, the theorem follows immediately in case all multipliers $\lambda_{i}$ are not roots of unity, since in that case the minimal-polydegree terms of the collection $\varphi_{1, k} - x_{1}, \ldots, \varphi_{n, k} - x_{n}$ are the linear terms $x_{i}$, with coefficients of valuation that grows as $\log_{p}k + v(\lambda_{i} - 1)$.

So now, assume some of the multipliers are roots of unity. Replacing $\varphi$ with an iterate, we may assume all such multipliers are equal to $1$. Note that if $\lambda_{i}$ is not a root of unity, then $x_{i}$ is still a minimal-polydegree term in $\varphi_{i, k} - x_{i}$. Therefore, the only minimal-polydegree terms we need to consider in the sequel are of the form $x_{1}^{\alpha_{1}}\ldots x_{r}^{\alpha_{r}}$, where $\lambda_{1} = \ldots = \lambda_{r} = 1$.

Unlike in $1$ dimension, we may have nontrivial Jordan blocks. We compute entries of the associated matrix directly. If the Jordan block is of size $s$, we have $$\begin{bmatrix} 1 & 1 & \cdots & 0 \\ 0 & 1 & \cdots & 0 \\ \vdots & \vdots & \ddots & \vdots \\ 0 & 0 & \cdots & 1\end{bmatrix}^{k}=\begin{bmatrix} 1 & k & \cdots & {k \choose s-1} \\ 0 & 1 & \cdots & {k \choose s-2} \\ \vdots & \vdots & \ddots & \vdots \\  0 & 0 & \cdots & 1\end{bmatrix}.$$




Assume without loss of generality that this Jordan block is in fact $x_{1}, \ldots, x_{s}$. Then for $i = 1, \ldots, s-1$, we have a $kx_{i+1}$-term in $\varphi_{i, k} - x_{i}$. Thus, $x_{2}, \ldots, x_{s}$ are minimal-polydegree terms in the collection $\varphi_{i, k} - x_{i}$, with coefficient $k$, whose valuation is of course logarithmic in $k$, bounded by $\log_{p}k$.

Thus, in the sequel, it suffices to look just at monomials involving powers of variables $x_{i}$ for which not only $\lambda_{i} = 1$ but also $x_{i}$ is at the head of its Jordan block. Now, let $x_{1}^{\alpha_{1}}\ldots x_{n}^{\alpha_{n}}$ be a minimal-polydegree term of the collection $\varphi_{i} - x_{i}$, for which $\alpha_{j} > 0$ only if $\lambda_{j} = 1$ and $x_{j}$ is at the head of its Jordan block. We will show that it is also a minimal-polydegree term of the collection $\varphi_{i, k} - x_{i}$, with coefficient growing logarithmically in $k$. Set $c_{i, \mathbf{\alpha}, k}$ for its coefficient in $\varphi_{i, k}$.

Assume by induction that $\mathbf{x}^{\mathbf{\alpha}}$ is a minimal-polydegree term of the collection $\varphi_{i, k} - x_{i}$. We will show it is also a minimal-polydegree term for $\varphi_{i, k+1} - x_{i}$. First, we write $$\varphi_{i, k+1}(x_{1}, \ldots, x_{n}) = \varphi_{i, k}(\varphi_{1}, \ldots, \varphi_{n}) = \sum_{\mathbf{\beta} \in \mathbb{Z}^{n}}c_{i, \mathbf{\beta}, k}(\varphi_{1})^{\beta_{1}}\ldots(\varphi_{n})^{\beta_{n}}.$$ Observe that for each $\mathbf{\gamma} \in \mathbb{Z}^{n}$, the $\mathbf{x}^{\mathbf{\gamma}}$-term of $\varphi_{i, k+1}$ depends on finding $\beta_{1} + \ldots + \beta_{j}$ vectors summing to $\gamma$, which we label as $\mathbf{\gamma}^{j, 1}, \ldots, \mathbf{\gamma}^{j, \beta_{j}}$ for each $j = 1, \ldots, n$, and taking the $\mathbf{x}^{\mathbf{\gamma}^{j, j'}}$-term of $\varphi_{j}$. This means $$c_{i, \mathbf{\gamma}, k+1} = \sum_{\mathbf{\beta} \in \mathbb{Z}^{n}}c_{i, \mathbf{\beta}, k}\sum_{\sum\mathbf{\gamma}^{j, j'} = \mathbf{\gamma}}\prod_{j = 1}^{n}\prod_{j' = 1}^{\beta_{j}}c_{j, \mathbf{\gamma}^{j, j'}}.$$

Let us unpack what this means. If $\gamma \preceq \alpha$ then $c_{i, \mathbf{\gamma}, k+1}$ depends on coefficients of the form $c_{j, \mathbf{\gamma}^{j, j'}}$ where $\mathbf{\gamma}^{j, j'} \preceq \mathbf{\gamma}$. Now if $\mathbf{\gamma} \prec \mathbf{\alpha}$ then by the minimality of $\mathbf{\alpha}$ this means all terms $\mathbf{\gamma}^{j, j'}$ must be linear, of the form $\mathbf{e}_{j}$ where $x_{j}$ is at the head of its Jordan block. Moreover, the coefficient is $c_{j, \mathbf{e}_{j}} = \lambda_{j} = 1$. Since we need to have $\sum\mathbf{\gamma}^{j, j'} = \mathbf{\gamma}$, it follows that $\mathbf{\beta} = \mathbf{\gamma}$ and then $c_{i, \mathbf{\gamma}, k+1} = c_{i, \mathbf{\gamma}, k} = 0$ by the induction hypothesis.

Now, finally, if $\gamma = \alpha$, then we can set $\mathbf{\gamma}^{j, j'} = \mathbf{e}_{j}$ as before and obtain a contribution of $c_{i, \mathbf{\alpha}, k}$. But we can also set $\mathbf{\gamma}^{j, 1} = \mathbf{\alpha}$ for one value of $j$, for which $\beta_{j} = 1$, and $\mathbf{\gamma}^{j', 1} = 0$ for $j' \neq j$, which forces $\beta_{j'} = 0$ since $\varphi_{1}, \ldots, \varphi_{n}$ have zero constant terms. In other words, we can set $\mathbf{\beta} = \mathbf{e}_{j}$ and then obtain a contribution equal to $c_{i, \mathbf{e}_{j}, k}c_{j, \mathbf{\alpha}}$. Taking everything together, we get $$c_{i, \mathbf{\alpha}, k+1} = c_{i, \mathbf{\alpha}, k} + \sum_{j = 1}^{n}c_{i, \mathbf{e}_{j}, k}c_{j, \mathbf{\alpha}}.$$

Observe that if all Jordan blocks are simple, then $c_{i, \mathbf{e}_{j}, k} = \lambda_{i}^{k}\delta_{ij}$ and then we obtain $c_{i, \mathbf{\alpha}, k+1} = c_{i, \mathbf{\alpha}, k} + \lambda_{i}^{k}c_{i, \mathbf{\alpha}}$. However, we can also get the same formula if we choose $i$ carefully enough. Specifically, we choose $i$ to be such that $c_{i, \mathbf{\alpha}} \neq 0$, and for each $j > i$ in the same Jordan block, $c_{j, \mathbf{\alpha}} = 0$; such an $i$ is guaranteed to exist, since $\mathbf{\alpha}$ is a minimal-polydegree term, i.e. there does exist some $\varphi_{i}$ where it has nonzero coefficient, and then we can pick the maximal such $i$. Then if $c_{i, \mathbf{e}_{j}, k} \neq 0$ then either $c_{j, \mathbf{\alpha}} = 0$ or $j = i$. For this $i$, we obtain $c_{i, \mathbf{\alpha}, k+1} = c_{i, \mathbf{\alpha}, k} + \lambda_{i}^{k}c_{i, \mathbf{\alpha}}$.

To finish the proof of the theorem, we need to prove that the recursive formula $c_{i, \mathbf{\alpha}, k+1} = c_{i, \mathbf{\alpha}, k} + \lambda_{i}^{k}c_{i, \mathbf{\alpha}}$ defines a sequence $c_{i, \mathbf{\alpha}, k}$ that is nonzero for every $k$ and has valuation growing logarithmically in $k$. We replace the recursive formula with the closed-form formula $c_{i, \mathbf{\alpha}, k} = c_{i, \mathbf{\alpha}}(1 + \lambda_{i} + \ldots + \lambda_{i}^{k-1})$; this can be seen by writing $a_{k} = c_{i, \mathbf{\alpha}, k}$ and then expanding $a_{k+1} = a_{k} + \lambda_{i}^{k}a_{1}$ as $$a_{k+1} = a_{k} + \lambda_{i}^{k}a_{1} = a_{k-1} + \lambda_{i}^{k-1}a_{1} + \lambda_{i}^{k}a_{1} = \ldots = a_{1}(1 + \ldots + \lambda_{i}^{k}).$$ Now, we split into two cases, depending on whether $\lambda_{i} = 1$ or not.

If $\lambda_{i} = 1$, then $c_{i, \mathbf{\alpha}, k+1} = c_{i, \mathbf{\alpha}, k} + c_{i, \mathbf{\alpha}}$; this is an arithmetic sequence, with closed-form formula $c_{i, \mathbf{\alpha}, k} = kc_{i, \mathbf{\alpha}}$. Clearly, $\mathbf{x}^{\alpha}$ remains a minimal-polydegree term, and its coefficient's valuation grows as $v(k) + v(c_{i, \mathbf{\alpha}})$. Observe that this is the same formula we obtain in the proof of Theorem~\ref{rl}, in case $\chr F = 0$ and $\lambda = 1$: we get $v(k) + v(a_{e})$, where $a_{e}$ is the minimal-degree term of $\varphi(z) - z$.

Finally, if $\lambda_{i} \neq 1$, then we get $$c_{i, \mathbf{\alpha}, k} = c_{i, \mathbf{\alpha}}(1 + \lambda_{i} + \ldots + \lambda_{i}^{k-1}) = c_{i, \mathbf{\alpha}}\frac{\lambda_{i}^{k} - 1}{\lambda_{i} - 1}.$$ But now, as in the proof of Theorem~\ref{rl} when $\chr F = 0$ and $\lambda$ is not a root of unity, we have $v(\lambda_{i}^{k} - 1) = v(k) + v(\lambda_{i} - 1)$ as long as $v(\lambda_{i} - 1) \geq 1/(p-1)$; replacing $\varphi$ with an iterate if necessary, we can in fact guarantee $v(\lambda_{i} - 1) \geq 1/(p-1)$. Thus $c_{i, \mathbf{\alpha}, k} = v(k) + v(c_{i, \mathbf{\alpha}})$ again. This is analogous to the $1$-dimensional formula when $\lambda = 1$ rather than when $\lambda$ is not a root of unity, since we are ultimately assuming that we have some multipliers $\lambda_{j}$ that are roots of unity, and computing the valuation of the lowest-polydegree nonlinear terms rather than the valuation of the linear term minus $1$.

We have proven that for whenever $\mathbf{\alpha}$ is a minimal polydegree for the family $\varphi_{1} - x_{1}, \ldots, \varphi_{n} - x_{n}$, it is also a minimal polydegree for $\varphi_{1, k} - x_{1}, \ldots, \varphi_{n, k} - x_{n}$, and moreover, there exists $i$ such that the coefficient $c_{i, \mathbf{\alpha}, k}$ has valuation growing logarithmically in $k$. Thus we can take $t$ to be the maximum valuation of each the minimal polydegrees of the family $\varphi_{i} - x_{i}$, and then $h(k) = \log_{p}k + t$ will satisfy the conditions of Lemma~\ref{implies}.\end{proof}

If $\chr F > 0$ and all multipliers of $\varphi$ are irrationally indifferent, then we cannot use the above technique to prove Theorem~\ref{isolatedindiff} the way we proved Theorem~\ref{rl}. The reason is that although the lowest-polydegree terms of the system $\varphi_{i, k} - x_{i}$ are the linear ones, their coefficients have valuations that grow linearly in $k$, as in the proof of Theorem~\ref{rl}; say the explicit bound is $v(\lambda_{i}^{k} - 1) \leq ck$. Thus, if we attempt to bound the polyvaluation of period-$k$ points by $m$, we find that we can only force the dominant terms in each $\varphi_{i, k}$ to lie inside a simplex of side $ck/m + 1$; the volume of this simplex grows on the order of $k^{n}$ rather than $k$, and therefore we cannot pick $m$ such that the volume is eventually smaller than $k$.

Nonetheless, we conjecture that,

\begin{conj}\label{isolatedconj}Let $\varphi$ be an analytic map, defined over $F$ of characteristic $p$, from the open unit polydisk in $n$ dimensions to itself. If $x = (0, \ldots, 0)$ is a fixed point all of whose multipliers are indifferent, then either $x$ is isolated as a periodic point of $\varphi$, or, for some iterate $\varphi^{k}$, there exists a pointwise fixed subvariety of positive dimension passing through $x$.\end{conj}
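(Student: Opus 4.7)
My plan is to attempt the Newton-polytope strategy of the proof of Theorem~\ref{isolatedindiff}, first applying Lemma~\ref{formsep} to isolate the obstruction. The rationally indifferent component of that proof is characteristic-independent: the recursion
\[c_{i,\mathbf{\alpha},k+1}=c_{i,\mathbf{\alpha},k}+\lambda_i^kc_{i,\mathbf{\alpha}}\]
yields $c_{i,\mathbf{\alpha},k}=kc_{i,\mathbf{\alpha}}$ whenever $\lambda_i=1$, giving the logarithmic bound $h(k)\in O(\log k)$ required by Lemma~\ref{implies}. The only place the proof of Theorem~\ref{isolatedindiff} actually breaks in positive characteristic is when $\lambda_i$ is irrationally indifferent: then $v(\lambda_i^{p^s}-1)=p^sv(\lambda_i-1)$ by Frobenius, forcing $v(\lambda_i^k-1)\in O(k)$ rather than $O(\log k)$.

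Accordingly, I would apply Lemma~\ref{formsep} with the irrationally indifferent multipliers $\lambda_1,\ldots,\lambda_r$ placed in the ``separated'' slot. The semigroup hypothesis is automatic: the multiplicative semigroup generated by the roots of unity $\lambda_{r+1},\ldots,\lambda_n$ consists only of roots of unity, so none of $\lambda_1,\ldots,\lambda_r$ lies in it. Lemma~\ref{anal1} may fail in char $p$ because $|\lambda_i-\lambda_{r+1}^{\alpha_{r+1}}\cdots\lambda_n^{\alpha_n}|$ can be strictly less than $1$, so the formal fixed subvariety $V$ produced need not be analytic; but the formal coordinate change $x_i\mapsto x_i-f_i$ still puts us in the situation where $V\colon x_1=\cdots=x_r=0$ is formally fixed and $\varphi|_V$ has only rationally indifferent multipliers. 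The proof of Theorem~\ref{isolatedindiff} then applies without change to $\varphi|_V$, establishing the conjecture on $V$.

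The next step is to transfer isolation on $V$ to isolation in the ambient space. Since $V$ is formally invariant, $\varphi_i$ for $i\le r$ contains no pure monomials in $x_{r+1},\ldots,x_n$; every minimal-polydegree term of $\varphi_{i,k}-x_i$ whose support meets $\{x_1,\ldots,x_r\}$ is controlled by the iterated linear part in those directions. The aim is to run Lemma~\ref{implies} in the $r$ transverse directions only, treating the $V$-variables as parameters, and conclude that any period-$k$ point sufficiently close to $x$ must in fact satisfy $x_1=\cdots=x_r=0$. Combined with the preceding paragraph, this would prove the conjecture.

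The main obstacle is that this transverse bound is governed precisely by $v(\lambda_i^k-1)$, whose growth is linear in $k$ in char $p$. The relevant simplex then has volume $O(k^r/m^r)$ rather than $O((\log k)^r/m^r)$ in the polyvaluation radius $m$, and one cannot force the mixed volume below $k$ by keeping $m$ bounded. I see two possible remedies. The first is a further application of Lemma~\ref{formsep} to peel off a maximal multiplicatively independent subfamily of the irrationally indifferent multipliers, on whose subvariety Theorem~\ref{hy} linearizes $\varphi$ outright so that isolation is immediate, then induct on the residual resonant piece whose multipliers lie in a finitely generated multiplicatively dependent group of infinite order. The second is a refinement of the Newton polytope argument that records more than leading-order information, mirroring how the char-$p$ case of Theorem~\ref{rl} exploits Frobenius to track $\lambda^{p^s}-1$ recursively rather than from a single lowest-degree term. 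The second route seems closer to the phenomena involved, but is where I expect the real technical difficulty to lie, and is almost certainly entangled with the still-open one-dimensional conjecture of~\cite{LRL,LRL2}.
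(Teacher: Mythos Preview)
This statement is a \emph{conjecture} in the paper, not a theorem; the paper offers no proof and explicitly leaves it open, so there is nothing to compare your attempt against except the paper's informal discussion of why the characteristic-$0$ argument fails.

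Your proposal contains a basic error in its first paragraph that undermines the entire strategy. You assert that the rationally indifferent branch of the proof of Theorem~\ref{isolatedindiff} is characteristic-independent because $c_{i,\mathbf{\alpha},k}=kc_{i,\mathbf{\alpha}}$ yields $h(k)\in O(\log k)$. But in characteristic $p$, the integer $k$ is read as an element of $\mathbb{F}_p\subset F$: whenever $p\mid k$ we have $kc_{i,\mathbf{\alpha}}=0$, so the term at polydegree $\mathbf{\alpha}$ disappears entirely and $\mathbf{\alpha}$ is no longer a minimal polydegree of $\varphi^k-\mathrm{id}$. There is no logarithmic bound on the valuation; there is no bound at all on the total degree $e$ of the new minimal polydegrees as $k$ ranges over $p$-powers. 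Lemma~\ref{implies} does not apply.

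You have therefore inverted the difficulty. In one dimension, the irrationally indifferent case in characteristic $p$ is already \emph{proven} in Theorem~\ref{rl} (linear growth of $v(\lambda^k-1)$ is acceptable when $n=1$), whereas the rationally indifferent case in characteristic $p$ is precisely the open conjecture of~\cite{LRL,LRL2}. Your separation via Lemma~\ref{formsep} thus reduces to $\varphi|_V$ with \emph{all} multipliers roots of unity, which already for $\dim V=1$ is the unsolved problem---not the solved one. The obstacle you flag in your final paragraph (linear growth of $v(\lambda_i^k-1)$ in the irrationally indifferent transverse directions) is the obstruction the paper itself discusses, and it is genuine for $n\ge 2$; but the purportedly easy base case on $V$ is at least as hard and contains the one-dimensional open problem as a special case.
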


\begin{rem}Using the proof of Theorem~\ref{isolatedper} assuming Theorem~\ref{isolatedindiff}, it is easy to show that if Conjecture~\ref{isolatedconj} is true, then in fact every fixed point with nonrepelling multipliers is isolated.\end{rem}

The reason we make this conjecture is that, in characteristic $p$, nonlinear low-degree terms of $\varphi^{k}$ vanish after taking $p$th iterates. For example, in one dimension, if we write $\varphi(z) = \lambda z + z^{e} + \ldots$ where $\lambda$ is not a root of unity, then $$\varphi^{k}(z) = \lambda^{k}z + \lambda^{k-1}(1 + \lambda^{e-1} + \ldots + \lambda^{(e-1)(k-1)})z^{e} + \ldots$$ and then the $z^{e}$-term vanishes whenever $p \mid k$. If enough low-degree terms vanish -- in case the multipliers are all irrationally indifferent, it suffices for the terms below total degree $k$ to vanish when $k$ is a prime power -- then we can bound the valuations of the periodic points. In ongoing work of the author with Lindahl, this method is used to prove that in some additional $1$-dimensional cases not covered by Lindahl and Rivera-Letelier in~\cite{LRL2}, rationally indifferent points in characteristic $p$ are isolated.

\section{Some Cases of Zhang's Conjecture}\label{zhang}

In~\cite{ABR}, Amerik-Bogomolov-Rovinsky prove Conjecture~\ref{introzhang} in case the multipliers are multiplicatively independent. Specifically, they show, in Proposition 2.3 and Corollaries 2.7 and 2.8:

\begin{thm}\label{abr}Let $\varphi = (\varphi_{1}, \ldots, \varphi_{n})$ be an analytic map on the closed unit polydisk, defined over a complete nonarchimedean field $F$ of characteristic $0$. Suppose that the point $x = (0, \ldots, 0)$ is fixed and has nonrepelling, multiplicatively independent multipliers, all of which are algebraic numbers. Then $\varphi$ is linearizable at $x$; moreover, after linearization, if $z = (z_{1}, \ldots, z_{n}) \in \mathcal{O}_{F}^{n}$ misses all the hyperplanes $x_{i} = 0$, then the orbit of $z$ is never contained in any power series $g \in F[[x_{1}, \ldots, x_{n}]]$ that converges on $\mathcal{O}_{F}^{n}$. In particular, if $\varphi$ is equivalent to a morphism $\psi$ with coefficients in a number field $K$ with $F = K_{v}$, $z$ is the image of an algebraic point under the linearization map, and $g$ is the image of a polynomial in $K[x_{1}, \ldots, x_{n}]$, then the orbit of $z$ is not contained in $g$.\end{thm}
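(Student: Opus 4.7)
The plan is to first linearize $\varphi$, then extend the discrete forward orbit to a continuous one-parameter analytic family, turn the non-containment statement into an infinite moment problem, and resolve the moment problem by a nonarchimedean Stone--Weierstrass argument. Since the $\lambda_i$ are algebraic, nonrepelling, and multiplicatively independent (hence distinct), Theorem~\ref{hy} provides local-analytic linearization, and I assume $\varphi(x) = (\lambda_1 x_1, \ldots, \lambda_n x_n)$. Replacing $\varphi$ by an iterate $\varphi^N$ (which partitions the orbit of $z$ into $N$ shifted $\varphi^N$-orbits, each of whose starting points is again in $\mathcal{O}_F^n$ off the coordinate hyperplanes), I arrange $|\lambda_i - 1| < p^{-1/(p-1)}$ for every indifferent $\lambda_i$, so that $\ell_i := \log \lambda_i$ is defined and $\exp(t\ell_i)$ converges on $\mathcal{O}_F$. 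Then
$$\tau : \mathbb{Z}_p \longrightarrow \mathcal{O}_F^n, \qquad \tau(t) \;=\; (\exp(t\ell_1)\,z_1, \ldots, \exp(t\ell_n)\,z_n),$$
is a rigid-analytic curve with $\tau(k)$ the $k$th orbit point.

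If $g = \sum_\alpha c_\alpha x^\alpha$ converges on $\mathcal{O}_F^n$ and vanishes on the orbit, the pullback $h(t) := g(\tau(t))$ is an analytic $F$-valued function on $\mathbb{Z}_p$ vanishing on the dense set $\mathbb{Z}_{\ge 0}$; the one-variable rigid identity principle forces $h \equiv 0$, and collecting coefficients of $t^m$ yields
$$\sum_\alpha c_\alpha z^\alpha (\alpha \cdot \ell)^m \;=\; 0 \qquad \text{for every } m \ge 0.$$
The $\ell_i$ are $\mathbb{Z}$-linearly independent, because $\sum m_i \ell_i = 0$ would give $\prod \lambda_i^{m_i} = \exp(0) = 1$, contradicting multiplicative independence; so the scalars $s_\alpha := \alpha \cdot \ell$ are distinct. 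Since each $\lambda_i$ is algebraic, the $\ell_i$ lie in a locally compact finite extension $K_v \subset F$ of $\mathbb{Q}_p$, so $K := \overline{\{s_\alpha\}} \subset K_v$ is compact. The functional $\mu(f) := \sum_\alpha c_\alpha z^\alpha f(s_\alpha)$ on $C(K, F)$ is bounded, because $|c_\alpha z^\alpha| \to 0$, and annihilates every polynomial by the moment identities; the nonarchimedean Stone--Weierstrass theorem of Dieudonn\'e--Kaplansky then shows polynomials (which separate points of $K$) are dense in $C(K, F)$, forcing $\mu \equiv 0$. For each $\alpha_0$, the set of $\alpha$ with $|c_\alpha z^\alpha| \ge \tfrac{1}{2}|c_{\alpha_0}z^{\alpha_0}|$ is finite, so a clopen ball $B \ni s_{\alpha_0}$ can be chosen to exclude every other $s_\alpha$ in that set; then $0 = \mu(\chi_B) = c_{\alpha_0}z^{\alpha_0} + r$ with $|r| \le \tfrac{1}{2}|c_{\alpha_0}z^{\alpha_0}|$, and the ultrametric forces $c_{\alpha_0}z^{\alpha_0} = 0$, hence $c_{\alpha_0} = 0$ since $z_i \neq 0$.

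The main obstacle is the attracting case: when some $|\lambda_i| < 1$, the map $\tau$ does not extend analytically from $\mathbb{Z}$ to $\mathbb{Z}_p$, because the $p$-adic exponential cannot absorb $\log \lambda_i$ for a non-unit $\lambda_i$, and no iteration of $\varphi$ can repair this. My approach would be to apply Lemma~\ref{formsep} (whose hypotheses hold because an attracting multiplier cannot lie in the multiplicative semigroup generated by indifferent ones, since absolute values distinguish the two) to separate the attracting directions from the indifferent ones and realize the indifferent directions as a fixed analytic subvariety $V$; Proposition~\ref{rate} then says the attracting coordinates decay geometrically along the orbit, so a Taylor expansion of $g$ in the transverse attracting coordinates reduces the problem, coefficient by coefficient, to vanishing on the orbit of $\varphi|_V$, whose multipliers are purely indifferent and already treated. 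The ``in particular'' polynomial clause is in fact cleaner still, as only finitely many $c_\alpha$ are nonzero and the Stone--Weierstrass step collapses to a finite-dimensional Vandermonde argument on the distinct $s_\alpha$.
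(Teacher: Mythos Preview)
The paper does not prove Theorem~\ref{abr}; it is quoted from \cite{ABR} (Proposition~2.3 and Corollaries~2.7--2.8 there), so there is no in-paper proof to compare against. Your argument in the purely indifferent case is correct and is essentially the Amerik--Bogomolov--Rovinsky method: interpolate the orbit $p$-adically, invoke the rigid identity principle, and finish with a Stone--Weierstrass density argument. The convergence of $h$, the $\mathbb{Z}$-linear independence of the $\ell_i$, and the clopen-ball isolation all go through as written.

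The gap is exactly where you flag it. Your proposed reduction in the attracting case --- Taylor-expand $g=\sum_\beta g_\beta(x_{r+1},\dots,x_n)\,x_1^{\beta_1}\cdots x_r^{\beta_r}$ and conclude ``coefficient by coefficient'' that each $g_\beta$ vanishes on the $\varphi|_V$-orbit --- is not justified: from $\sum_\beta g_\beta\bigl(\varphi|_V^k(z')\bigr)(\lambda^\beta)^k z^\beta=0$ one cannot read off the vanishing of individual $g_\beta$, because the ``coefficients'' $g_\beta(\varphi|_V^k(z'))$ themselves vary with $k$. A correct version has to peel off the layer of $\beta$ with $|\lambda^\beta|$ maximal, using the observation that a continuous $H:\mathbb{Z}_p\to F$ with $H(k)\to 0$ along $k\in\mathbb{Z}_{\ge 0}$ is identically zero (choose $k_n\to t$ in $\mathbb{Z}_p$ with $k_n\to\infty$ in $\mathbb{Z}$), then recurse; when several $\beta$ share the same $|\lambda^\beta|$ one must further disentangle finitely many unit ratios $\lambda^\beta/\lambda^{\beta'}$. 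None of this is in your sketch, and Lemma~\ref{formsep} and Proposition~\ref{rate} are idle here since after linearization $V$ is already the coordinate plane.

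More to the point, the interpolation step is what manufactures the obstacle in the first place, and it can simply be dropped. After linearization, $g(\Lambda^k z)=0$ is \emph{already} the moment condition
\[
\sum_\alpha d_\alpha\,\mu_\alpha^{\,k}=0\qquad(k\ge 0),\qquad d_\alpha=c_\alpha z^\alpha\to 0,\quad \mu_\alpha=\lambda^\alpha\ \text{pairwise distinct},\ |\mu_\alpha|\le 1.
\]
Run your Stone--Weierstrass argument on $K=\overline{\{\mu_\alpha\}}$ rather than on $\overline{\{s_\alpha\}}$: since the $\lambda_i$ are algebraic, all $\mu_\alpha$ lie in a fixed locally compact $K_v\subset F$, so $K$ is compact; polynomials in the single coordinate separate points of $K$; Kaplansky gives density in $C(K,F)$; and your clopen-ball isolation yields $d_\alpha=0$ exactly as before. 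This handles attracting and indifferent multipliers uniformly, with no logarithm, no $\varphi|_V$, and no layer induction.
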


\begin{rem}The assumption that the multipliers of $x$ are nonrepelling is important in the analytic proof, but once we look at multipliers on the number field $K$ rather than on the local field $F$, the assumption is no longer required, since we can pick $F = K_{v}$ to miss all denominators of the multipliers of $x$. In fact, if we pick $v$ to be a prime of good reduction, then the multipliers of any periodic point are never repelling; see~\cite{Hsia96}.\end{rem}

Using the tools developed in Section~\ref{separating}, we can extend Theorem~\ref{abr} to cover some cases in which the multipliers are not multiplicatively independent.

\begin{thm}\label{zero}Let $\varphi: \mathbb{P}^{n} \to \mathbb{P}^{n}$ be a morphism defined over a number field $K$. Suppose that there exists a fixed point $x$ whose multipliers consist of a single $0$ and $n-1$ multiplicatively independent numbers, and that $K$ is large enough that $x \in \mathbb{P}^{n}(K)$ and the rational canonical form of $\varphi_{*}(T_{x})$ is a diagonal matrix. Then there exists a point $z \in \mathbb{P}^{n}(K)$ whose forward orbit under $\varphi$ is Zariski-dense.\end{thm}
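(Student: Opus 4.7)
The plan is to reduce Theorem \ref{zero} to an application of Theorem \ref{abr} on a codimension-one invariant subvariety produced by the separation machinery of Section \ref{separating}. I start by picking a nonarchimedean place $v$ of $K$ at which $\varphi$ has good reduction and at which each nonzero multiplier $\lambda_2, \ldots, \lambda_n$ is a $v$-adic unit (all but finitely many $v$ work); set $F = K_v$, and dehomogenize so that $x = 0$ with all coefficients of the local expansion of $\varphi$ in $\mathcal{O}_F$. Since $\lambda_1 = 0$ cannot lie in the multiplicative semigroup generated by the nonzero $\lambda_2, \ldots, \lambda_n$, Lemma \ref{formsep} with $r=1$ produces a unique formal fixed subvariety $V = \{x_1 = f_1(x_2, \ldots, x_n)\}$; since $|\lambda_1 - \lambda_2^{\alpha_2}\cdots\lambda_n^{\alpha_n}|_v = 1$ for every admissible $\alpha$, and since $\varphi_{\ast}(T_x)$ is diagonal (so $s = 1$), Lemma \ref{anal1} yields convergence of $f_1$ on the open unit polydisk. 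The uniqueness of $f_1$ together with Galois equivariance places $f_1 \in K[[x_2, \ldots, x_n]]$.

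After the coordinate change $\xi_1 = x_1 - f_1$, $\xi_i = x_i$ for $i \geq 2$, $V$ becomes the linear $K$-rational hyperplane $\{\xi_1 = 0\}$, and $\varphi|_V$ is an analytic self-map of the open unit polydisk of $V$ fixing the origin with multipliers $\lambda_2, \ldots, \lambda_n$ -- indifferent, multiplicatively independent, algebraic. By Theorem \ref{hy}, $\varphi|_V$ is linearizable over $F$ by some $h$ with coefficients in $K$; Theorem \ref{abr} applied to $\varphi|_V$ then provides $z_0 \in \overline{K}^{n-1}$ with multiplicatively independent coordinates such that $w := h(z_0)$ has $\varphi|_V$-orbit not contained in the zero locus of any convergent power series $G \in F[[\xi_2, \ldots, \xi_n]]$ obtained by pulling back a nonzero polynomial in $K[x_1, \ldots, x_n]$.

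The candidate point for a Zariski-dense orbit is $z := (0, w) \in V$ (in $\xi$-coordinates). For any nonzero $g \in K[x_1, \ldots, x_n]$, its restriction to $V$ is the power series $G(\xi_2, \ldots, \xi_n) = g(f_1(\xi_2, \ldots, \xi_n), \xi_2, \ldots, \xi_n)$, convergent over $K$ on the unit polydisk. When $G \not\equiv 0$, the ABR conclusion forces $g(\varphi^k(z)) = G(\varphi|_V^k(w)) \neq 0$ for infinitely many $k$. The main obstacle I anticipate is the exceptional case $G \equiv 0$, i.e., $V$ sitting inside a proper $\varphi$-invariant algebraic hypersurface $W$ of $\mathbb{P}^n$ defined over $K$. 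To handle this I instead take $z$ slightly off $V$ with $\overline{K}$-coordinates chosen to avoid every backward iterate $\varphi^{-k}(V)$ and so that the $V$-projection of $z$ has multiplicatively independent linearized coordinates. Proposition \ref{rate} guarantees $\xi_1(\varphi^k(z)) \to 0$ while the remaining coordinates approximate the $\varphi|_V$-orbit of this projection; factoring out the highest power of $\xi_1$ dividing $g$ leaves a power series $\tilde g$ with $\tilde g|_V \not\equiv 0$, and applying Theorem \ref{abr} on the approximating $V$-orbit produces values of $\tilde g$ (and hence of $g$) bounded away from zero infinitely often along the orbit of $z$, establishing Zariski-density.
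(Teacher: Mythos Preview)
Your setup matches the paper's: pick a nonarchimedean place where the nonzero multipliers are units, apply Lemmas~\ref{formsep} and~\ref{anal1} to produce the invariant hypersurface $V$, and linearize $\varphi|_{V}$ so that Theorem~\ref{abr} becomes available. The divergence is in how you choose the candidate point and how you rule out containment in an algebraic subvariety.

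There are two genuine gaps. First, your primary candidate $z=(0,w)\in V$ need not be $K$-rational: in the original $x$-coordinates this point is $(f_{1}(h(z_{0})),h(z_{0}))$, and the value of a convergent $K$-coefficient power series at a $K$-point lies only in $K_{v}$, not in $K$. Since $V$ is a transcendental analytic hypersurface in general, it may carry no $K$-points near $x$ other than $x$ itself. Second, and more seriously, your treatment of the exceptional case does not close. Proposition~\ref{rate} gives $\xi_{1}(\varphi^{k}(z))\to 0$, but the remaining coordinates only track the $\varphi|_{V}$-orbit of the projection up to an error of size $|z_{1}|$, which is fixed rather than vanishing. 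Theorem~\ref{abr} asserts that the $V$-orbit is not contained in $\{\tilde g|_{V}=0\}$; it does not provide a lower bound on $|\tilde g|_{V}(\varphi|_{V}^{k}(w))|$ that is uniform over all polynomials $g$. Whatever bound you extract depends on $g$, so you cannot choose one $|z_{1}|$ that transfers non-vanishing across the approximation for every $g$ simultaneously. Thus you have not produced a single point whose orbit avoids \emph{all} proper algebraic subvarieties.

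The paper sidesteps both issues by taking $z$ off $V$ from the outset, in an explicit open region $\{|z_{1}|<|z_{i}|<1\}$, and then arguing by compactness rather than approximation. Because $K_{v}$ is locally compact, any closed analytic $W$ containing the orbit of $z$ is compact; since the orbit approaches $V$, $W$ must actually intersect $V$ at some accumulation point $z'$. Now $z'\in V\cap W$ has its $\varphi|_{V}$-orbit trapped in a proper analytic subvariety of $V$, so Theorem~\ref{abr} forces $z'$ onto a coordinate hyperplane $x_{i}=0$; but $|\varphi_{i,k}(z)|=|z_{i}|>0$ for $i>1$, so any accumulation point stays bounded away from those hyperplanes, a contradiction. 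One then separately excludes $W=V$ by arranging that the orbit never lands on $V$. Finally, since the good region is open in $K_{v}$ after undoing the analytic coordinate changes, density of $K$ in $K_{v}$ supplies the required $K$-rational point. This compactness-and-intersection step is the idea your proposal is missing.
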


\begin{proof}We choose a valuation $v$ of $K$ such that the nonzero multipliers are all indifferent, and set $F = K_{v}$. We dehomogenize and change coordinates so that $x = (0, \ldots, 0)$ as in the case of Lemma~\ref{formsep}. By Lemma~\ref{anal1}, we can do an analytic coordinate-change such that $x_{1} = 0$ is a fixed subvariety, which we call $V$; here we choose coordinates such that $\lambda_{1} = 0$. At no point in the proof of Lemma~\ref{formsep} do we enlarge our field of definition; once $x \in F$ and $\lambda_{i} \in F$ for all $i$, we are only performing field operations. Thus, it is no obstacle that $F$ is not algebraically closed.

We have $\varphi_{1} = x_{1}g_{1}$ where $g_{1} \in \mathcal{O}_{F}[[x_{1}, \ldots, x_{n}]]$ has zero constant. We can locally linearize $\varphi|_{V}$ by Theorem~\ref{abr}, and this means that for $i > 1$, we have $\varphi_{i} = \lambda_{i}x_{i} + x_{1}g_{i}$, where $g_{i} \in \mathcal{O}_{F}[[x_{1}, \ldots, x_{n}]]$.

Now, suppose $z = (z_{1}, \ldots, z_{n})$, such that $|z_{1}| < |z_{i}| < 1$ for all $i$. The dominant term in $\varphi_{i}$ for $i > 1$ is always $\lambda_{i}x_{i}$, while $|\varphi_{1}(z_{1}, \ldots, z_{n})| < |z_{1}|$; thus the dominant term in $\varphi_{i, k}$ for $i > 1$ is $\lambda_{i}^{k}x_{i}$ as well. By Proposition~\ref{rate}, the orbit of $z$ is attracted to $V$. Suppose also that the orbit of $z$ is contained in a closed analytic subvariety $W$, fixed under $\varphi$, arising as the zero set of convergent power series.

Suppose first that $W$ is not $V$ (which means $W$ cannot contain $V$, since then it would just be the entire space). Then $W$ must intersect $V$. This is because $F$ is a finite extension of a $p$-adic field, and thus its closed unit polydisk is compact, making $W$ and $V$ compact as well, and now we use the fact that $W$ contains a set whose distance to $V$ approaches $0$ to show that $W$ must intersect $V$. Moreover, $V \cap W$ is closed and fixed under $\varphi$, and arises as the zero set of convergent power series. Let $z' \in V \cap W$; its orbit is contained in $V \cap W$, and in particular it is contained in the zero set of a convergent power series on $F$, e.g. one of the series defining $W$. By Theorem~\ref{abr}, this implies $z'$ lies on some hyperplane $x_{i} = 0$ with $i > 1$. But now we have $|\varphi_{i, k}(z_{1}, \ldots, z_{n})| = |z_{i}|$ whenever $|z_{1}| < |z_{j}| < 1$ for all $j$. Thus if we pick $z$ to be such that $z_{i} \neq 0$ for all $i$, the subvariety $W$ stays bounded away from any hyperplane $x_{i} = 0$, and therefore $z'$ cannot lie on any hyperplane of the form $x_{i} = 0$, giving us a contradiction.

So now we are reduced to the case when $W = V$. To rule it out, we will find $z$ whose forward image approaches $V$ but does not actually land on it. This means finding a suitable region in which $g_{1} \neq 0$. Suppose first that $g_{1} \in F[[x_{1}]]$; then, possibly after conjugating by a linear map $x_{1} \mapsto cx_{1}$, we get $\varphi_{1} = x_{1}^{e} + \ldots$ for some $e > 1$, and then for $0 < |z_{1}| < 1$ the orbit never lands on zero. Now suppose that $g_{1}$ has some monomials involving terms other than $x_{1}$. If $|z_{1}|$ is sufficiently small relative to $|z_{2}|, \ldots, |z_{n}|$, then the dominant terms of $g_{1}$ will be those with the lowest $x_{1}$-degree. Among the terms with minimal $x_{1}$-degree, there are finitely many of minimal $(x_{2}, \ldots, x_{n})$-polydegree, and we can find many values of $(|z_{2}|, \ldots, |z_{n}|) \in \mathbb{Z}^{n-1}$ such that exactly one of those terms will dominate, preventing $g_{1}$ from being zero. Moreover, since $|\varphi_{i}(z_{1}, \ldots, z_{n})| = |z_{i}|$, this dominant term will dominate in the entire orbit of $z$, preventing the orbit from landing on $V$.

So far, we have produced an open region in the open unit polydisk on which the orbit of $z$ either lands on $V$ or is not contained in any fixed closed analytic subvariety; moreover, we have produced many assignments of valuations of $z_{2}, \ldots, z_{n}$ for which the orbit of $z$ does not land on $V$. In $K_{v}$ this is an open set. Moreover, if we invert our linearization and our coordinate change from Lemma~\ref{formsep}, this region remains open and still satisfies the same properties. Since $K$ is dense in $K_{v}$, this means there are many points in this region defined over $K$. Finally, if a point has an orbit under the algebraic map $\varphi$ that is not Zariski-dense, then the orbit is contained in a closed algebraic subvariety, defined as the zero set of some polynomials in $x_{1}, \ldots, x_{n}$, which are mapped to convergent power series under local analytic coordinate changes; thus, in this region, every point has a Zariski-dense orbit.\end{proof}

\begin{thm}\label{hack}Let $\varphi: \mathbb{P}^{2} \to \mathbb{P}^{2}$ be a morphism defined over a number field $K$. Suppose that there exists a fixed point $x$ defined over $K$ whose multipliers consist of $1$ and a number that is not a root of unity. Then there exists a point $z \in \mathbb{P}^{n}(K)$ whose forward orbit under $\varphi$ is Zariski-dense.\end{thm}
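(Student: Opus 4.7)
After replacing $\varphi$ with a suitable iterate, assume $\lambda_2=1$ exactly and let $\lambda=\lambda_1$ denote the other multiplier, a non-root-of-unity algebraic number. Kronecker's theorem combined with the product formula produces a place $v$ of $K$ with $|\lambda|_v<1$; in the easy case some such $v$ is nonarchimedean, while in the worst case ($\lambda$ an algebraic unit) only archimedean $v$ work and we must take $F=K_v\cong\mathbb{C}$. Dehomogenize so that $x=(0,0)$ and put the linear part of $\varphi$ into diagonal form $\operatorname{diag}(\lambda,1)$.

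Separate the rationally indifferent direction $\lambda_2=1$ via Lemma~\ref{formsep}; the required non-resonance $1\notin\langle\lambda\rangle$ is immediate. Lemma~\ref{anal1} yields analytic convergence, using $c=1$ in the nonarchimedean case (since $|1-\lambda^\alpha|_v=1$) and the argument described after Remark~\ref{smooth} in the complex case. After a coordinate change the resulting fixed subvariety is $V'=\{x_2=0\}$, so $\varphi_2=x_2\,g_2(x_1,x_2)$ with $g_2(0,0)=1$ and $\varphi_1(x_1,0)=\lambda x_1+O(x_1^2)$. Apply Koenigs' theorem (Theorem~\ref{infodump}) to linearize the $1$-dimensional attracting restriction $\varphi|_{V'}$, and then perform additional analytic coordinate changes to remove every non-resonant monomial from $\varphi$ (the classical Poincar\'e--Dulac procedure); the non-resonant denominators $\lambda^{\alpha_1}-\lambda_i$ are all bounded below in absolute value because $|\lambda|_v<1$, guaranteeing convergence. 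The resulting normal form is $\varphi_1=\lambda x_1\,P(x_2)$ and $\varphi_2=Q(x_2)$ with $P(0)=1$, and $Q(x_2)=x_2+x_2^e$ for some $e\geq 2$ after the $1$-dimensional normalization of Theorem~\ref{infodump} (the identity case is ruled out, since an algebraic morphism of $\mathbb{P}^2$ admits no positive-dimensional pointwise fixed subvariety, as in the proof of Theorem~\ref{isolatedper}).

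Pick $z=(z_1,z_2)\in K^2$ near $(0,0)$ with $z_1,z_2\neq 0$ of small absolute value, and in the complex case with $z_2$ in an attracting Leau--Fatou petal of $Q$. Since the dynamics is decoupled, $z_{2,k}=Q^k(z_2)$ is purely one-dimensional and $z_{1,k}=z_1\lambda^k\prod_{j<k}P(z_{2,j})$. The convergent product stays bounded, so $|z_{1,k}|_v$ decays geometrically at rate $|\lambda|_v^k$, whereas $|z_{2,k}|_v=|z_2|_v$ is constant in the nonarchimedean case and $|z_{2,k}|\sim Ck^{-1/(e-1)}$ in the complex case. Suppose for contradiction that the orbit lies in an irreducible proper closed subvariety $W\subsetneq\mathbb{P}^2$, locally an analytic curve. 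In the nonarchimedean case, the Rivera--Letelier isolation theorem (Theorem~\ref{rl}) implies $Q$ has no nonzero periodic points near $0$, so the $Q$-invariant accumulation set of $(z_{2,k})$ on $V=\{x_1=0\}$ is infinite, forcing $W\supseteq V$ and hence $W=V$, contradicting $z_1\neq 0$. In the complex case, any local parametrization $W=\{x_1=g(x_2)\}$ with $g$ vanishing to order $m\geq 1$ would force $z_{1,k}/z_{2,k}^m$ to tend to a nonzero constant, but the rate analysis gives $|z_{1,k}|/|z_{2,k}|^m\sim|\lambda|_v^k k^{m/(e-1)}\to 0$ for every $m$, so $g\equiv 0$; the parametrization $x_2=h(x_1)$ is excluded because $z_{2,k}$ decays polynomially while any nonzero power of $z_{1,k}$ is exponentially small. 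A density and openness argument, as at the end of the proof of Theorem~\ref{zero}, then promotes the existence of $z$ from $K_v^2$ to $K^2$.

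The main obstacle is establishing analytic convergence of the Poincar\'e--Dulac decoupling in the complex case: because $|\lambda-1|$ may be strictly less than $1$, the analytic version of the "second" separation (of the attracting direction) provided by Lemma~\ref{anal1} fails, and the decoupling $\varphi_2=Q(x_2)$ must instead be obtained by directly controlling the non-resonant denominators $\lambda^{\alpha_1}-\lambda_i$. The attracting hypothesis $|\lambda|_v<1$ is precisely what keeps these denominators uniformly bounded away from zero; without it one would land in a small-divisor regime much closer to the Siegel linearization problem, and the argument here would break down.
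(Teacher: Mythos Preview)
Your overall architecture is sound and reaches the same conclusion, but the route in the complex case is genuinely different from the paper's, and there are a few soft spots worth flagging.

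\textbf{What you do differently.} You push all the way to a Poincar\'e--Dulac normal form $\varphi_1=\lambda x_1 P(x_2)$, $\varphi_2=Q(x_2)$, and then finish with a \emph{rate-of-decay} argument: on any branch of $W$ through the origin one would have $|z_{1,k}|\asymp |z_{2,k}|^{m}$ for some $m>0$, whereas $|z_{1,k}|$ decays exponentially and $|z_{2,k}|$ only like $k^{-1/(e-1)}$. The paper instead performs a more limited decoupling (only $\varphi_2$ is made to depend on $x_2$ alone, via explicit multiplicative changes $x_2\mapsto x_2(1+\alpha_{d_1,d_2}x_1^{d_1}x_2^{d_2})$ with a hands-on convergence estimate), and then closes with an \emph{algebraic} argument on minimal-polydegree terms: writing $h=g\circ\varphi$, one has $g\mid h$, the minimal-polydegree terms of $h$ are $\lambda^{d_1}$ times those of $g$, and this forces all minimal polydegrees of $g$ to share the same $x_1$-degree, a contradiction. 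Your endgame is cleaner and sidesteps the somewhat delicate point the paper must address, namely that $g\mid g\circ\varphi$ holds only because $g$ is a local-analytic deformation of a polynomial rather than an arbitrary power series.

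\textbf{Gaps to tighten.} First, the analytic convergence of the full Poincar\'e--Dulac conjugacy is not a black-box citation here: the eigenvalues $(\lambda,1)$ are not in the Poincar\'e domain for maps (one multiplier lies on the unit circle), and the normal form has infinitely many resonant monomials, so the classical theorem does not apply directly. You correctly identify that the non-resonant denominators $\lambda^{\alpha_1}-\lambda_i$ are uniformly bounded below, but you still owe a majorant-series estimate showing the conjugacy coefficients grow at most geometrically; this is exactly the kind of work the paper carries out explicitly for its (weaker) decoupling. Second, your parametrization step should be stated via Puiseux branches: an irreducible analytic curve through $0$ need not be a graph $x_1=g(x_2)$ with $g$ analytic, but each branch satisfies $|x_1|\asymp |x_2|^{m}$ for some rational $m>0$, and your exponential-versus-polynomial mismatch still gives the contradiction. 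Third, the claim that $\prod_{j<k}P(z_{2,j})$ is a ``convergent product'' that ``stays bounded'' is not quite right: since $|P(z_{2,j})-1|=O(|z_{2,j}|)=O(j^{-1/(e-1)})$, the partial products can grow like $\exp\bigl(Ck^{(e-2)/(e-1)}\bigr)$ (or like $k^{C}$ when $e=2$). This does not harm your conclusion, because such sub-exponential factors are still dominated by $|\lambda|^{k}$, but the sentence as written is inaccurate.
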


\begin{proof}We fix coordinates so that $x = (0, 0)$, $\lambda_{2} = 1$ (replacing $\varphi$ with an iterate if necessary) and $\lambda_{1}$ is not a root of unity. We also choose $F = K_{v}$ (extending to $\mathbb{C}$ if $K_{v} = \mathbb{R}$) to be such that $|\lambda_{1}| < 1$; since $\lambda_{1}$ is not a root of unity, such $F$ is guaranteed to exist. We apply Lemmas~\ref{formsep} and~\ref{anal1} twice, once to analytically conjugate so that $x_{1} \mid \varphi_{1}$ and the second time to analytically conjugate so that $x_{2} \mid \varphi_{2}$. We check that, if we have already fixed an analytic conjugation such that $x_{1} \mid \varphi_{1}$, conjugating by $x_{2} \mapsto x_{2} - f_{2}$ is not going to change this relation, and thus $x_{i} \mid \varphi_{i}$ can hold simultaneously for $i = 1, 2$.

If $\lambda_{1} = 0$, then we are essentially in the same situation as in Theorem~\ref{zero}. We choose $v$ to be any nonarchimedean valuation on $K$, and apply the proof of Theorem~\ref{zero} verbatim, except that $\varphi_{2}$ has some higher-order terms in $x_{2}$; however, if $|z_{1}| < |z_{2}| < 1$ then the $x_{2}$ term will still dominate at $(z_{1}, z_{2})$, as required. Finally, instead of applying Theorem~\ref{abr} to rule out fixed analytic subvarieties near fixed points with multiplicatively independent multipliers, we apply Theorem~\ref{rl} to rule out an accumulation of periodic points near $x$. In the sequel, we can then assume $\lambda_{1} \neq 0$.

By standard results of normal forms (see e.g.~\cite{D1C}), we can analytically conjugate $x_{1}$ and $x_{2}$ separately, so that the restriction of $\varphi$ to $x_{2} = 0$ is linearized to $\lambda_{1}x_{1}$ and the restriction to $x_{1} = 0$ is $x_{2} + x_{2}^{e}$, where $e > 1$ is the order of $x$ as a root of the fixed point power series $\varphi(0, x_{2}) - x_{2}$. We thus have $\varphi_{1} = \lambda_{1}x_{1} + x_{1}x_{2}g_{1}$ and $\varphi_{2} = x_{2} + x_{2}^{e} + x_{1}x_{2}g_{2}$.

If $v$ is nonarchimedean, then we apply the same method as in the proof of Theorem~\ref{zero}. If $|z_{1}| < |z_{2}| < |\lambda_{1}|$ then the dominant term of $\varphi_{2}$ at $(z_{1}, z_{2})$ is $x_{2}$ and then $|\varphi_{2}(z_{1}, z_{2})| = |z_{2}|$, and the dominant term of $\varphi_{1}$ is $\lambda_{1}x_{1}$ and then $|\varphi_{1}(z_{1}, z_{2})| = |\lambda_{1}||z_{1}|$. We are then in the same situation as in Theorem~\ref{zero}: any fixed analytic subvariety $W$ will intersect the line $x_{1} = 0$, and because periodic points near $0$ are isolated, we get a contradiction unless $W$ actually is the line $x_{1} = 0$. It is in fact easier to ensure $W$ is not the line $x_{1} = 0$ than in the proof Theorem~\ref{zero}, since the preimage of $x_{1} = 0$ under $\varphi$ consists now of $x_{1} = 0$ and $\lambda_{1} + x_{2}g_{1} = 0$, and when $|z_{1}|, |z_{2}|$ are sufficiently small, $\lambda_{1}$ is the dominant term and thus the equation is never satisfied.

So now we assume $v$ is archimedean, and $F = \mathbb{C}$. We can still talk about dominant terms, and we can still apply Proposition~\ref{rate} and argue that any fixed $V$ intersects $x_{1} = 0$, but the dynamics on the line $x_{1} = 0$ is not so simple. In particular, we cannot ensure $|x_{2}|$ stays the same on the orbit of $z_{2}$.

We will simplify the action of $\varphi$ on $(x_{1}, x_{2})$ to make it partly a coordinatewise map. We cannot use coordinate changes of the form $x_{2} \mapsto x_{2} + f(x_{1}) + g(x_{2})$, as those would run afoul of the uniqueness of both the linearization (which gives a unique $g$) and the coordinate change in Lemma~\ref{formsep} (which gives a unique $f$). However, we can change coordinates by $x_{2} \mapsto x_{2}(1 + \alpha_{d_{1}, d_{2}}x_{1}^{d_{1}}x_{2}^{d_{2}})$. This means replacing $x_{2}$ with $$x_{2}\left(1 - \alpha_{d_{1}, d_{2}}x_{1}^{d_{1}}x_{2}^{d_{2}} + \frac{\alpha_{d_{1}, d_{2}}^{2}}{d_{2}+1}x_{1}^{2d_{1}}x_{2}^{2d_{2}} - \ldots\right)$$ and then replacing $\varphi_{2}$ with $$\varphi_{2}\cdot(1 + \alpha_{d_{1}, d_{2}}(\varphi_{1})^{d_{1}}(\varphi_{2})^{d_{2}}).$$ This does not change any terms in $\varphi_{2}$ below $x_{1}^{d_{1}}x_{2}^{d_{2}+1}$, which is incremented by $(\lambda_{1}^{d_{1}} - 1)\alpha_{d_{1}, d_{2}}$. We can thus choose a sequence of $\alpha_{d_{1}, d_{2}}$s that kills all of $g_{2}$, at least formally.

We will now show that the sequence $\alpha_{d_{1}, d_{2}}$ defines a local analytic coordinate change. For this, we need to show that $$x_{2}\prod_{d_{1}, d_{2} = (1, 0)}^{(\infty, \infty)}(1 + \alpha_{d_{1}, d_{2}}x_{1}^{d_{1}}x_{2}^{d_{2}})$$ converges and is locally invertible in some polydisk around $(0, 0)$. The convergence part is equivalent to showing that $$\sum_{d_{1}, d_{2} = (1, 0)}^{(\infty, \infty)}\alpha_{d_{1}, d_{2}}x_{1}^{d_{1}}x_{2}^{d_{2}}$$ converges. Observe that if we can bound $\alpha_{d_{1}, d_{2}}$ by some exponential $c^{d_{1} + d_{2}}$ where $c$ is positive real then we easily get convergence when $|x_{1}|, |x_{2}| < 1/c$. We also have $1 + \alpha_{d_{1}, d_{2}}x_{1}^{d_{1}}x_{2}^{d_{2}} \neq 0$, so the infinite product is nonzero; observe that the terms of the inverse grow more slowly than those of the reciprocal, so it suffices to show the product is nonzero. Now, since $1-\lambda_{1}^{d_{1}}$ is bounded away from $0$, all we need to prove is that, as we change coordinates by $\alpha_{d_{1}, d_{2}}$ for higher and higher $d_{1}$ and $d_{2}$, the remaining $x_{1}^{d'_{1}}x_{2}^{d'_{2}}$-terms stay bounded by a uniform $c^{d'_{1} + d'_{2}}$; this will prove both that the infinite product converges and that it is nonzero in a polydisk of positive radius $1/c$.

Before we do any coordinate change, all terms of $\varphi_{1}$ and $\varphi_{2}$ are by assumption bounded by $1$. Each time we do a coordinate change by $\alpha_{d_{1}, d_{2}}$, we divide by $1-\lambda_{1}^{d_{1}}$ and add a number of terms. If we choose to change coordinates in order of increasing $d_{1} + d_{2}$ and then in order of increasing $d_{2}$, then the number of terms we add to $x_{1}^{d_{1}}x_{2}^{d_{2}}$ in prior coordinate changes is (weakly) bounded by the number of prior changes, which grows polynomially in $d_{1} + d_{2}$, times the number of terms added per coordinate change. The number of lower terms grows polynomially as well.

Therefore, we just need to check what happens when we take a single monomial and change it by $x_{2} \mapsto x_{2}(1 + \alpha_{d_{1}, d_{2}}x_{1}^{d_{1}}x_{2}^{d_{2}})$. In the worst case, it will multiply the $x_{1}^{d'_{1}}x_{2}^{d'_{2}}$-term by binomial coefficients ${d'_{1} \choose \lfloor d'_{1}/d_{1}\rfloor}{d'_{2} \choose \lfloor d'_{2}/d_{2}\rfloor}$, which are exponential in $d'_{1} + d'_{2}$. To see that this is the case, observe that $${d'_{1} \choose \lfloor d'_{1}/d_{1}\rfloor} \leq {d'_{1} \choose \lfloor d'_{1}/2\rfloor} = \frac{d'_{1}!}{\lfloor d'_{1}/2\rfloor!\lceil d'_{1}/2\rceil!}$$ and now for large integers $n$, $n!$ grows on the order of $(n/e)^{n}$, where here (and nowhere else in this paper) $e = 2.718\ldots$ rather than an integer index. So now we obtain $$\frac{d'_{1}!}{\lfloor d'_{1}/2\rfloor!\lceil d'_{1}/2\rceil!} \sim \frac{(d'_{1})^{d'_{1}}}{\lfloor d'_{1}/2\rfloor^{d'_{1}}} \sim 2^{d'_{1}},$$ where the relation $A(k) \sim B(k)$ means $A(k) \in O(B(k))$ and $B(k) \in O(A(k))$. Now we need a growth rate that's at most exponential in $d'_{1} + d'_{2}$, so the $2^{d'_{1} + d'_{2}}$ we have found is enough to ensure the coordinate change is analytic near $(0, 0)$ and not just formal.

In the sequel, we will then assume $\varphi_{2} = x_{2} + x_{2}^{e}$. Note that we cannot similarly turn $\varphi_{1}$ into a function of $x_{1}$ alone, since we'd need to divide by $1 - \lambda_{2}^{d_{2}}$, which we can't since $\lambda_{2} = 1$.

So now, we start with a point $(z_{1}, z_{2})$, with $|z_{1}|$ and $|z_{2}|$ both small. Since $\varphi_{2}$ depends only on $z_{2}$, we will choose $z_{2}$ to lie in the Fatou set of $\varphi_{2}$. Recall that $\varphi_{2}$ has $e-1$ attracting petals arranged around $0$, with repelling directions in between. The repelling directions are those at which $z$ and $z^{e}$ have the same complex argument, and the attracting petals will attract along directions at which $z$ and $z^{e}$ have diametrically opposed complex arguments; for example, if $e = 2$, then $0$ is repelling along the positive real direction, and in any other direction points wrap around to the negative real line, along which $0$ is attracting. The rate of attraction is quite slow: in the limit, $|\varphi_{2, k}|$ shrinks on the order of $\sqrt[e-1]{1/k}$.

Proposition~\ref{rate} as stated applies only in the nonarchimedean case, but in this case we can extend it to $\mathbb{C}$. Within its region of convergence, $\varphi_{1}$ converges absolutely. Thus we can easily take $|z_{1}|, |z_{2}|$ sufficiently small so that $|z_{2}g_{1}(z_{1}, z_{2})| < |\lambda_{1}|/2$. In this region, $\varphi_{1, k}$ will approach $0$, and in fact will approach $0$ by a geometric rate, on the order of $1/\lambda_{1}^{k}$, in the limit.

Observe that since we are taking $z_{2}$ to lie in an attracting petal, any fixed curve $W$ containing the orbit of $z$ intersects $x_{1} = 0$ at the origin. Thus, we cannot apply any results on isolated periodic points -- indeed, over $\mathbb{C}$, rationally indifferent points are not isolated at all, as they lie in the Julia set (but their attracting petals contain no periodic points). Instead, we use a different argument: we will show that $W$ cannot possibly arise as a local analytic deformation of an algebraic curve.

An algebraic curve is defined by a polynomial equation in $x_{1}$ and $x_{2}$. The local analytic coordinate changes we have done all replace $x_{1}$ and $x_{2}$ with power series in $F[[x_{1}, x_{2}]]$, and not Laurent series. Thus if $W$ is an analytic deformation of an algebraic curve, it is defined by a power series. We will derive a contradiction.

Suppose $W$ is the zero set of the power series $g(x_{1}, x_{2})$, where we choose $g$ to be its own radical. Replacing $\varphi$ by an iterate if possible, we may assume $W$ is irreducible, so $g$ is irreducible. In particular, $g$ cannot have a unique term of minimal polydegree (Definition~\ref{minpoly}): if this minimal-polydegree term is nonconstant, say $x_{1}^{d_{1}}x_{2}^{d_{2}}$, then $g$ is divisible by $x_{1}^{d_{1}}x_{2}^{d_{2}}$, whereas if it is constant, then $g(0, 0) \neq 0$, which contradicts the fact that $(0, 0)$ lies on $W$.

Now, replace $g(x_{1}, x_{2})$ with $h(x_{1}, x_{2}) = g(\lambda_{1}x_{1} + x_{1}x_{2}g_{1}, x_{2} + x_{2}^{e})$. We write $g_{d_{1}, d_{2}}$ for the $x_{1}^{d_{1}}x_{2}^{d_{2}}$-term of $g$ and $h_{d_{1}, d_{2}}$ for the $x_{1}^{d_{1}}x_{2}^{d_{2}}$-term of $h$. If $x_{1}^{d_{1}}x_{2}^{d_{2}}$ has minimal polydegree in $g$, then we obtain $h_{d_{1}, d_{2}} = \lambda_{1}^{d_{1}}g_{d_{1}, d_{2}}$ and this is a minimal-polydegree term in $h$. Moreover, we do not get any new terms of minimal polydegree, since every term of $\varphi_{i}$ is divisible by $x_{i}$.

Since $W$ is fixed under $\varphi$, we have $g \mid h$. This is not a general fact of power series; it's a consequence of the fact that $g$ and $h$ are both local-analytic deformations of polynomials in $\mathbb{C}[x_{1}, x_{2}]$ by the same conjugation, whence $h/g$ is also a deformation of a polynomial. Since $g$ and $h$ have the same minimal polydegrees, $h/g$ must have a nonzero constant term, which we write as $c$. But now this implies that, if $(d_{1}, d_{2})$ is a minimal polydegree of $g$ and $h$, then $h_{d_{1}, d_{2}} = cg_{d_{1}, d_{2}}$, whence $c = \lambda_{1}^{d_{1}}$. This forces all minimal-polydegree terms to have the same $x_{1}$-degree, which must be $\log c/\log\lambda_{1}$. But two distinct minimal-polydegree terms cannot have the same $x_{1}$-degree, since then the one with the higher $x_{2}$-degree would not be minimal. This gives us the contradiction we need.\end{proof}

\begin{rem}Two parts of the proof of Theorem~\ref{hack} can be generalized to higher dimension, with one multiplier equal to $1$ and the rest multiplicatively independent: namely, if $\lambda_{n} = 1$, then it is possible to analytically change coordinates so that $\varphi_{n} = x_{n} + x_{n}^{e}$, and if $g \in F[[x_{1}, \ldots, x_{n}]]$ is irreducible and has zero constant then it cannot divide $g(\varphi)$.

We can in fact show more: if the ideal $(g_{1}, \ldots, g_{r})$ is prime and every $g_{i}$ has zero constant, then the power series $g_{1}, \ldots, g_{r}$ collectively have more than $r$ minimal polydegrees, or else we can replace the generators by an invertible linear combination and obtain some reducible $g_{i}$; on these at least $r+1$ minimal polydegrees, $\varphi$ acts with at least $r+1$ different weights, and this implies that we cannot write every $g_{i}(\varphi)$ as a linear combination of $g_{1}, \ldots, g_{r}$. This proves that not only are analytic hypersurfaces through $x$ never periodic, but also nonlinear analytic subvarieties in every dimension are never periodic. Unfortunately, this is not enough to generalize Theorem~\ref{hack}, as we cannot rule out analytic subvarieties that pass near $x$ but not through it; the trick of using divisibility in $F[[x_{1}, \ldots, x_{n}]]$ fails completely for maps with nonzero constants. Observe that even in the case of Theorem~\ref{hack}, our analytic subvarieties do not pass through $x$ if $F$ is nonarchimedean; and if there are several multipliers that are not roots of unity, then it is not guaranteed there is any $F$ such that these multipliers are all attracting.\end{rem}

\bibliographystyle{amsplain}
\bibliography{rigidity}

\providecommand{\bysame}{\leavevmode\hbox to3em{\hrulefill}\thinspace}
\providecommand{\MR}{\relax\ifhmode\unskip\space\fi MR }
\providecommand{\MRhref}[2]{%
  \href{http://www.ams.org/mathscinet-getitem?mr=#1}{#2}
}
\providecommand{\href}[2]{#2}
\begin{thebibliography}{10}

\bibitem{ABR}
Katerina Amerik, Fedor Bogomolov, and Marat Rovinsky, \emph{Remarks on
  endomorphisms and rational points}, Compositio Math. \textbf{117} (2011),
  no.~6, 1819--1842.

\bibitem{Bez}
Jean-Paul B\'{e}zivin, \emph{Sur les points p\'{e}riodiques des applications
  rationnelles en dynamique ultram\'{e}trique}, Acta Arith. \textbf{100}
  (2001), no.~1, 63--74.

\bibitem{Fak01}
Najmuddin Fakhruddin, \emph{Boundedness results for periodic points on
  algebraic varieties}, Proc. Indian Acad. Sci. Math. Sci. \textbf{111} (2001),
  no.~2, 173--178.

\bibitem{HY}
M.~Herman and J.C. Yoccoz, \emph{Generalizations of some theorems of small
  divisors to non archimedean fields}, Geometric Dynamics (Jr. Palis, J., ed.),
  Lecture Notes in Mathematics, vol. 1007, Springer Berlin Heidelberg, 1983,
  pp.~408--447 (English).

\bibitem{Hsia96}
Liang-Chung Hsia, \emph{A weak {N}\'{e}ron model with applications to p-adic
  dynamical systems}, Compositio Math. \textbf{100} (1996), 277--304.

\bibitem{LRL}
Karl-Olof Lindahl and Juan Rivera-{L}etelier, \emph{Optimal cycles in
  ultrametric dynamics and minimally ramified power series}, arXiv:1311.4478,
  to appear in Compositio Math., Nov 2013.

\bibitem{LRL2}
\bysame, \emph{Generic parabolic points are isolated in positive
  characteristic}, arXiv:1501.03965, Jan 2015.

\bibitem{D1C}
John Milnor, \emph{Dynamics in one complex variable}, third ed., Annals of
  Mathematics Studies, vol. 160, Princeton University Press, Princeton, NJ,
  2006. \MR{MR2193309 (2006g:37070)}

\bibitem{PST}
Clayton Petsche, Lucien Szpiro, and Michael Tepper, \emph{Isotriviality is
  equivalent to potential good reduction for endomorphisms of $\mathbb{P}^{n}$
  over function fields}, Journal of Algebra \textbf{322} (2009), 3345--3365.

\bibitem{Rab}
Joseph Rabinoff, \emph{Tropical analytic geometry, {N}ewton polygons, and
  tropical intersections}, Adv. Math. \textbf{229} (2012), 3192--3255.

\bibitem{RL}
Juan Rivera-Letelier, \emph{Dynamique des fonctions rationnelles sur des corps
  locaux}, Ast\'{e}risque \textbf{287} (2003), 147--230.

\bibitem{Yoc}
Jean-Christophe Yoccoz, \emph{Centralisateurs et conjugaison diff\'{e}rentiable
  des diff\'{e}omorphismes du cercle}, Ast\'{e}risque \textbf{231} (1995),
  89--242.

\bibitem{Yoc2}
\bysame, \emph{Introduction to hyperbolic dynamics}, Real and Complex Dynamical
  Systems (Bodil Branner and Poul Hjorth, eds.), NATO ASI Series, vol. 464,
  Springer Netherlands, 1995, pp.~265--291 (English).

\bibitem{Zha}
Shou-Wu Zhang, \emph{Distributions in algebraic dynamics}, Surveys in
  Differential Geometry, vol.~10, International Press, 2006, pp.~381--430.

\end{thebibliography}

\end{document}